\def\ps@pprintTitle{%
}
\patchcmd{\hyper@makecurrent}{%
    \ifx\Hy@param\Hy@chapterstring
        \let\Hy@param\Hy@chapapp
    \fi
}{%
    \iftoggle{inappendix}{
        \@checkappendixparam{chapter}%
        \@checkappendixparam{section}%
        \@checkappendixparam{subsection}%
        \@checkappendixparam{subsubsection}%
        \@checkappendixparam{paragraph}%
        \@checkappendixparam{subparagraph}%
    }{}%
}{}{\errmessage{failed to patch}}
\newcommand*{\@checkappendixparam}[1]{%
    \def\@checkappendixparamtmp{#1}%
    \ifx\Hy@param\@checkappendixparamtmp
        \let\Hy@param\Hy@appendixstring
    \fi
}
\apptocmd{\appendix}{\toggletrue{inappendix}}{}{\errmessage{failed to patch}}
\apptocmd{\subappendices}{\toggletrue{inappendix}}{}{\errmessage{failed to patch}}
\newcommand*{\fullnameref}[1]{\hyperref[{#1}]{\autoref*{#1}: \nameref*{#1}}}
\newcommand*{\fullref}[1]{\hyperref[{#1}]{\autoref*{#1}}}
\renewcommand*{\eqref}[1]{%
  \hyperref[{#1}]{\textup{\tagform@{\ref*{#1}}}}%
}
\newtheorem{lemma}{Lemma}
\newtheorem{theorem}{Theorem}
\newtheorem{proposition}{Proposition}
\newcommand{\myparagraph}[1]{\paragraph{#1} \mbox{}\\ \mbox{} \\}
\begin{document}

\begin{titlepage}
   \begin{center}
       \vspace*{4cm}

       {\Huge \textbf{Frequentist Asymptotics of Variational Laplace} \par}

       \vspace{1cm}

       {\Large \textbf{Janis Keck}}
 
       \vspace{2cm}
            
       {\large Thesis Submitted for the Degree of\\
       Master of Science\\
       Social, Cognitive and Affective Neuroscience\\ \par}

      \vspace{1cm}
      {\large
      Arbeitsbereich Computational Cognitive Neuroscience \\
      Department of Education and Psychology\\
      Freie Universität Berlin \\ \par}

      \vspace{1cm}     
      {\large
      Primary Reviewer: Prof. Dr. Dirk Ostwald \\
      Secondary Reviewer: Prof. Dr. Felix Blankenburg\\ \par}
      \vspace{1cm}
     
      August 2020
            
   \end{center}
\end{titlepage}

\begin{frontmatter}

\begin{abstract}
Variational inference is a general framework to obtain approximations to the posterior distribution in a Bayesian context. In essence, variational inference entails an optimization over a given family of probability distributions to choose the member of this family best approximating the posterior. 
Variational Laplace, an iterative update scheme motivated by this objective, is widely used in different contexts in the cognitive neuroscience community. However, until now, the theoretical properties of this scheme have not been systematically investigated. Here, we study variational Laplace in the light of frequentist asymptotic statistics. Asymptotical frequentist theory enables one to judge the quality of point estimates by their limit behaviour. We apply this framework to find that point estimates generated by variational Laplace enjoy the desirable properties of asymptotic consistency and efficiency in two toy examples. Furthermore, we derive conditions that are sufficient to establish these properties in a general setting. Besides of point estimates, we also study the frequentist convergence of distributions in the sense of total variation distance, which may be useful to relate variational Laplace both to recent findings regarding variational inference as well as to classical frequentist considerations on the Bayesian posterior.
Finally, to illustrate the validity of our theoretical considerations, we conduct simulation experiments in our study examples.
\end{abstract}

\begin{keyword}
Bayesian inference \sep variational Bayes \sep variational inference \sep variational Laplace \sep point estimation \sep asymptotic statistics \sep machine learning
\end{keyword}

\end{frontmatter}

\section{Introduction}

Bayesian approaches to inference enjoy an ongoing popularity in the cognitive neuroscience community. The cornerstone of Bayesian inference is the posterior distribution, which depending on the complexity of the model used may be impossible to evaluate exactly. Variational inference refers to a general strategy to overcome this limitation, yielding an approximate 'variational' posterior. This is achieved by selecting the best approximating member of some restricted class of distributions,
which is more amenable to analysis \citep{blei2017}. In practice, this is done by maximizing a functional of the approximating density, the "evidence-lower-bound" ($\operatorname{ELBO}$).  Hence, variational inference is replacing the calculation of the posterior with an optimization of this functional over a subclass of probability distributions \citep{wainwright2008graphical}.
Importantly, if the subclass can be described by some finite set of parameters, the objective becomes an optimization over a set of real parameters \citep{Lu2017Gaussian,opper2009variational}. Ideally, the resulting problem may be solved by classical numerical optimization methods in a straightforward manner, although in more intriciate models additional approximations may be needed to make the objective tractable \citep{starke2017}. 

Due to practical benefits like scalability and speed, variational inference enjoys wide popularity in different research areas \citep{blei2017}. Prominently, methods falling under the 
 variational inference framework are actively developed in the machine learning and statistical communities \citep {blei2017,hoffman2013stochastic}. Nonetheless, also in the cognitive neuroscience community particular implementations taylored to models commonly used in our field  have been conceived.
To be specific, a great share of the popularity in cognitive neuroscience may be attributed to the implementations in SPM (https://www.fil.ion.ucl.ac.uk/spm/), where such techniques are used in a wide range of contexts \citep{penny2011statistical}. To name just a few, variational inference is used for Bayesian inference in General Linear Model (GLM) analysis of fMRI data \citep{friston2002classical, starke2017}, distributed source reconstruction in M/EEG inverse problems \citep{friston2008multiple}, hidden parameter estimation in dynamic causal models \citep{ostwald2016probabilistic,daunizeau2009variational,frassle2017regression} and in Bayesian treatments of multivariate methods \citep{friston2008Bayesian,friston2019variational}.

 Despite the different nature of these applications, many of the implementations make use of a common scheme to find an approximate solution to the optimization problem of variational inference: 
"Variational Laplace" \citep{friston2007variational} posits the approximating family of densities to be Gaussian and in essence works by replacing the $\operatorname{ELBO}$ in the objective by its Taylor approximation. 
Although this scheme is frequently used for inference in practice, the theoretical research on this method seems to be sparse. The scheme could show satisfying results in terms of parameter recovery in simulation studies  \citep{starke2017,friston2007variational}, but to our knowledge no study has investigated the theoretical properties of point estimates obtained by variational Laplace. 

In this work, we want to contribute to filling this gap by analyzing those point estimates from the standpoint of frequentist asymptotical statistical theory. Well-established quality criteria of point estimators like consistency and asymptotic normality allow to judge whether a point estimator will recover the true parameter in the limit and  also provide a tool to compare them among each other \citep{van2000asymptotic,shao2003mathematical}. Thus, in this work  we investigate variational Laplace in the light of these criteria. As variational inference is naturally situated in a Bayesian context, where inference typically culminates in a distribution rather than a single point estimate,  we also  study the asymptotics of the variational distributions. To this end, we draw from the established literature studying Bayesian quantities from a frequentist perspective \citep{van2000asymptotic,lehmann2006theory}. This will also allow us to compare our results to recent findings concerning those properties in general variational inference. 

The present work has the following structure:
In \autoref{sec:varinf} we introduce the reader to the main ideas lying at the core of variational inference. We also give a brief distinction between free-form approaches (\autoref{sec:free-form}) and fixed-form approaches (\autoref{sec:fixed-form}). The key notions of Variational Laplace, which is an approximate scheme applied in the latter case, will be discussed in \autoref{sec:varlap}. In this section we will also see that in the special case of a single parameter the point estimates of variational Laplace coincide with maximum-a-posteriori estimates, such that all theoretical properties of the latter equally apply to variational Laplace in this setting. We proceed by examining a fairly general model, which encompasses many of the applications of variational Laplace in \autoref{sec:model}. This model, comprising of a known, possibly non-linear transform function and parametric Gaussian noise, will serve as the study example for most parts of this work. We provide details on the application of variational Laplace in this model class and then study two special cases: The case of just one parameter set (\autoref{sec:singleparam}) and the case of a linear transform (\autoref{sec:linear}). In both cases, we give additional details on the iterates that are obtained when applying variational Laplace to these models. In \autoref{sec:freqcriteria}, we introduce the frequentist  asymptotic quality criteria for point estimators that we are studying in this work.
To apply these criteria to variational Laplace, we briefly delineate Bayesian point estimation via the posterior mean and how this naturally transfers to variational approaches in  \autoref{sec:bayescriteria}. Furthermore, we also briefly recapitulate popular results on the frequentist asymptotics of the Bayesian posterior distribution such as the Bernstein-von-Mises theorem. 
Following this introduction, to situate our work in the current state of research, we discuss previous work concerning the frequentist asymptotics of variational inference in \autoref{sec:review}. We will in particular focus on one new finding: In contrast to earlier results that were mostly dealing with specific models  \citep{wang2004lack,wang2012convergence}, in this recent work the consistency and asymptotic normality have been established under a general set of conditions \citep{wang2019frequentist}. These results will also serve as a reference for our findings in variational Laplace, which is why we state them in some detail. After these preparatory chapters, we are ready to present our results: In \autoref{sec:resultsingleparam} we show that in the case of one parameter, variational Laplace is consistent provided some conditions on the transform function. In \autoref{sec:resultlinear} we show that variational Laplace in the linear example model is consistent and asymptotically normal under mild additional assumptions. We furthermore observe that the asymptotics of the variational distributions are equivalent to those of the posterior distribution given by the Bernstein-von-Mises theorem.  Finally, in \autoref{sec:resultsgeneral} we provide general conditions under which consistency and asymptotic normality of variational Laplace will hold and also discuss the limits of the distribution. 
Having presented these results, we illustrate their validity by simulation studies in our example models in \autoref{sec:simulations}. We conclude our work with a discussion of possible limitations of our approach, open questions to be adressed in future work and the implications for practice (\autoref{sec:discussion}).

\section {Variational Inference}\label{sec:varinf}
In the following, we want to condense general ideas behind the variational inference framework. Variational inference is commonly employed in a Bayesian setting. Hence, the starting point of inference is a probabilistic model comprising observable and unobservable random variables, where the respective distributions admit densities, such that the joint probability density takes the form 
\begin{equation}
p(y,\theta) = p(y|\theta) p(\theta)
\end{equation}
where $y \in \mathbb {R}^m$ corresponds to observable random variables and $\theta \in \mathbb{R}^d$ corresponds to unobservable random variables. In our case, $y$ could be data that is acquired in an experiment while $\theta$ would be unknown parameters governing the model. In this setting, having observed $y$, to calculate the posterior $p(\theta|y)$, one would have to evaluate the following fraction
\begin{equation}
p(\theta|y) = \frac{p(y,\theta)}{p(y)} = \frac{p(y,\theta)}{\int p(y,\theta) d\theta}.
\end{equation}
However, the integral in the denominator may be analytically intractable and a numerical evaluation too costly. The rationale in variational inference  is to select a restricted class of "variational" probability density functions $\mathcal{Q}$ , which is more tractable, to approximate the posterior.
We will discuss common choices of this 'variational family' in the next sections.

The desired outcome of variational inference is to choose an optimal member $q^* \in \mathcal{Q}$ in the sense that 
\begin{equation}\label{KL_opt}
q^* = \underset{q  \in \mathcal{Q}}{\arg \min }  \operatorname{D_{KL}}(q || p(\cdot | y)),
\end{equation}
where the Kullback-Leibler-Divergence $\operatorname{D_{KL}}$ for probability densities $q,p$ is defined as
\begin{equation}
\operatorname{D_{KL}}(q || p)  = \int q(x)\ln{\frac{q(x)}{p(x)}} dx.
\end{equation}
The Kullback-Leibler-Divergence has the two important properties 
\begin{equation}
\operatorname{D_{KL}}(q || p) \geq 0
\end{equation}
and
\begin{equation}
\operatorname{D_{KL}}(q || p)  = 0 \iff  q=p \ a.e. ,
\end {equation}
and  is widely used as a (non-symmetric) distance measure between probability densities. Hence, minimizing it corresponds to the variational density $q^*$ being in some sense closest to the posterior \citep{bishop2006pattern,murphy2012machine,van2000asymptotic}.

Unfortunately, the optimization in \eqref{KL_opt} would still necessitate the evaluation of the posterior. To overcome this problem, variational inference decomposes the log-marginal-likelihood of the observed data as
\begin{equation}\label{decomp}
\ln {p(y)} = \text{ELBO}(q) +   \operatorname{D_{KL}}(q || p(\cdot | y)),
\end{equation}
where we have introduced the $\operatorname{ELBO}$ functional
\begin{equation}
\text{ELBO}(q) = \int q(\theta) \ln \left(\frac{p(y,\theta)}{q(\theta)}\right) d\theta = \int q(\theta) \ln p(y,\theta) d\theta + \mathbb{H}[q]
\end{equation}
where $\mathbb{H}[q] := - \int q(\theta) d\theta$  is the differential entropy of the density $q$. This decomposition leverages the aforementioned non-negativity of the Kullback-Leibler-Divergence, as one can now pursue the following objective for optimization: 
\begin{equation}
q^* = \underset{q  \in \mathcal{Q}}{\arg \max } \ \text{ELBO}(q). 
\end{equation}
The idea behind this replacement is the following: Since for fixed $y$, $\ln {p(y)}$ is fixed, the maximization of the $\operatorname{ELBO}$  corresponds to a minimization of the Kullback-Leibler-Divergence. Note that for the evaluation of the $\operatorname{ELBO}$, the posterior is not required.
Also note that always 
\begin{equation}
\ln {p(y)} \geq \operatorname{ELBO}(q),
\end{equation}
which motivates the nomenclature evidence-lower-bound ($\operatorname{ELBO}$). Furthermore, by maximizing the $\operatorname{ELBO}$, it becomes a closer approximation to the true model evidence $\ln p(y)$. This fact is frequently used to obtain an approximate model comparison criterion.
In this work, however, we will not study this property.

\subsection{Free-Form Mean Field Variational Inference}\label{sec:free-form}

A common choice for the variational family $\mathcal{Q}$ are 'mean-field' families. Here, one first fixes a partition of the parameter $\theta$ into $p$ disjoint subsets. Then, the corresponding mean-field family  consists of all densities that factorize over this  partition, that is 
\begin{equation}
\begin{aligned}
&\theta = (\theta_1,...,\theta_p)\\
&\mathcal{Q} = \left\{q: \mathbb{R}^d \to \mathbb{R}_+ \bigg\vert  q(\theta) =\prod \limits_{j=1}^{p} q_j (\theta_j)\right\}.
\end{aligned} 
\end{equation}
Besides of this assumption, resembling an independence of the respective subsets in the posterior, no additional constraints are imposed on the densities.
In principle, the restriction of the variational density class to the mean-field family already yields  an iterative procedure to approximate the posterior:
Maximization of the $\operatorname{ELBO}$ functional can  be achieved by iteratively maximizing the $\operatorname{ELBO}$ with respect to one density  $q_j(\theta_j)$, while treating the other densities as fixed. 
That is, in every iteration step one seeks to pursue the following coordinate-wise update scheme
\begin{equation}\label{coordinate_wise}
\begin{aligned} 
&\forall j \in \{1,...,p\} \\
&\mathcal{Q}_j = \left\{q_j: \mathbb{R}^{d_j} \to \mathbb{R}_+  \right\} \\
&{q_j}^{*}  = \underset{q_j  \in \mathcal{Q}_j}{\arg \max } \ \text{ELBO}\left(q_j  \prod_{k \neq j} {q_k}\right), \\
\end{aligned}
\end{equation}
where $q_j^{*}$ is the updated optimal variational density of the $j$-th parameter-subset given the fixed densities $\{q_k| k \neq j\}$.
Using variational calculus, it can be shown that the maximization of this objective is obtained by setting 
\begin{equation}
{q_j}^{*} (\theta_j)  \propto \exp\left( \int \prod_{k \neq j} q_k (\theta_{k}) \ln{p(y,\theta)} d \theta_{/j}\right),
\end{equation}
where in the integral $d \theta_{/j}$ refers to integrating over all but the components in the $j$-th set \citep{murphy2012machine}. We will refer to this result as the "fundamental lemma of variational inference" or fundamental lemma for short \citep{ostwald2014tutorial}.
Note that in this approach, although it does not specify the forms of the variational densities beforehand, these forms often still will be restricted to a certain family by the update scheme. As an important example, this is the case in conjugate exponential family models \citep{beal2003variational,murphy2012machine}.
This form, however, although depending on the model and the prior,  
will only "fall out" during analytical treatment of the above procedure - motivating the term "free-form variational inference"  \citep{blei2017}.

\subsection{Fixed-Form Mean-Field Variational Inference}\label{sec:fixed-form}

In practice, the above free-form approach may still not be viable, especially in nonconjugate models where an analytic treatment is hard. Thus it is common to further restrict the mean-field family of densities $\mathcal{Q}$.
Namely, the most typical restriction is to admit only Gaussian densities, that is 
\begin {equation}
\begin{aligned}
\mathcal{Q} = \left\{q(\theta)\bigg\vert q(\theta) = \prod_{j=1}^{p} q_{\mu_j,\Sigma_j}(\theta_j) \right\}\\
q_{\mu_j,\Sigma_j}(\theta_j):= \mathcal{N}(\theta_{j}; \mu_j,\Sigma_j).
\end{aligned}
\end {equation}
One advantage of this approach is that instead of optimizing a functional of probability densities, the problem turns into an optimization problem over the manifold of admissible parameters for Gaussian densities.
That is, in essence it is simplified to a numerical optimization problem - provided, one extracts the dependence of the $\operatorname{ELBO}$ function on the parameters.
As this may still be a non-trivial task, \citet{friston2007variational}  have proposed an approximation to the log-joint probability in the model to give a simple closed-form of the $\operatorname{ELBO}$  that can be numerically optimized.
This scheme, commonly referred to as variational Laplace, iteratively updates one pair of expectation and covariance parameters, while keeping the other parameters fixed. In the following paragraphs, we will lay down the basic ideas of this scheme as it has been detailed by \cite{friston2007variational}. Below, we will focus on the main ideas, while deferring most derivations to \autoref{sec:varlapderivation}.

\subsubsection{Variational Laplace}\label{sec:varlap}

To simplify the following notation, for fixed $y$ we define a map
\begin{equation}
L_y(\theta_1,...,\theta_p) = \ln p(y,\theta_1,...,\theta_p).
\end{equation}
Variational Laplace employs a coordinate-wise update scheme, in a similar spirit as in \eqref{coordinate_wise}, situated in the fixed-form setting, with Gaussian variational densities.
Since these densities are fully determined by their expectation and covariance parameters, the functional $\operatorname{ELBO}(q)$ becomes a function $E(\mu,\Sigma)$ of the variational parameters $\mu = (\mu_1,...,\mu_p) ,\Sigma = (\Sigma_1,...,\Sigma_p)$ and  evaluates to
\begin{equation}
\begin{aligned}
E(\mu,\Sigma) &= \int{L_y(\theta_1,...,\theta_p)  \prod_{j=1}^{p} q_{\mu_j,\Sigma_j}(\theta_j) d\theta_j } +\sum_{j=1}^{p}   \mathbb{H} [q_{\mu_j,\Sigma_j}(\theta_j)] \\
&= \int{ L_y(\theta_1,...,\theta_p)  \prod_{j=1}^{p} q_{\mu_j,\Sigma_j}(\theta_j) d\theta_j } + \frac{1}{2}  \sum_{j=1}^{p}  \ln(\det(\Sigma_j)) + C,\\
\end{aligned}
\end{equation}
where we have used \eqref{Gaussianentropy} to evaluate the differential entropy. Here and in what follows, $C$ refers to some (varying) constant that does not matter for optimization purposes.
Variational Laplace now proceeds as follows:
Assume the parameter set $(\mu_j,\Sigma_j)$ is to be updated, while the parameters $(\mu_{/j},\Sigma_{/j})$ are held fixed. Then, an approximated version of the $\operatorname{ELBO}$ can be derived as
\begin{equation}\label{approxelbo}
\widetilde{E}(\mu_j,\Sigma_j) = L_y(\mu_j,\mu_{/j}) + \frac{1}{2} \sum_{k=1}^{p} \operatorname{tr}\left(\Sigma_k H_{L_y}^{\theta_k}(\mu_j,\mu_{/j})\right) + \frac{1}{2} \sum_{k=1}^p \ln \det(\Sigma_k) + C,
\end{equation}
where
$ H_{L_y}^{\theta_k}(\mu_j,\mu_{/j})$
denotes the quadratic  subpart of the Hessian matrix of the full log-joint probability where the derivatives are only taken with respect to coordinates of the $k$-th parameter, consult \autoref{sec:mvnotation} for details on notation.
This approximated version of the $\operatorname{ELBO}$ essentially relies on a second-order Taylor approximation centered on the variational expectation parameters.
Notably, the above formula yields the advantage that the optimal covariance parameter can be solved in an analytic way. 
Using \eqref{derivtrace} and \eqref{derivlog} to calculate the matrix-derivative of this expression with respect to the $j$-th covariance matrix yields
\begin{equation}
\frac{\widetilde{E}(\mu_j,\Sigma_j) }{\partial \Sigma_j} = \frac{1}{2}  \left(H_{L_y}^{\theta_j}(\mu_j,\mu_{/j})  + \Sigma_j^{-1}\right),
\end{equation}
such that for a fixed variational expectation parameter component $\mu_j$ the optimal covariance matrix in this setting will be given by $-H_{L_y}^{\theta_j}(\mu_j,\mu_{/j}) ^{-1}$. 
It is worth noting that this derivation will only yield a valid covariance matrix if $-H_{L_y}^{\theta_j}(\mu_j,\mu_{/j}) ^{-1}$ is a symmetric, positive definite matrix. Symmetry will typically not be an issue as long as the
log joint probability is twice continuously differentiable. However, the second requirement may not be fulfilled in every model. In the next chapter, we will see how by a slight modification of the Hessians, this may at least partly be achieved in typical applications. 
Hence, in the following, the notation  $H_{L_y}^{\theta_j}(\mu_j,\mu_{/j} )^{-1}$ will either refer to the true Hessian or the respective approximation, such that the expressions remain meaningful. Furthermore, our theoretical results will hold valid in both cases. 

Having derived the above optimal covariance matrix, one could proceed by plugging in this expression into equation \eqref{approxelbo} and optimize it with respect to the variational expectation parameter.  
In the literature, however, it is proposed to determine the optimal expectation parameter with respect to the objective 
\begin{equation}\label{varlap}
\begin{aligned}
\mu_j^* &= \underset{\mu_j} {\arg \max}  \ I_j(\mu_j)\\
I_j(\mu_j) &= L_y(\mu_j,\mu_{/j}) + \frac{1}{2} \sum_{k \neq j}  \operatorname{tr}\left(\Sigma_k H_{L_y}^{\theta_k}(\mu_j,\mu_{/j})\right),
\end{aligned}
\end{equation}
we will discuss possible motivations for this objective in \autoref{sec:varlapderivation} \citep{friston2007variational,daunizeau2017variational,buckley2017free,wipf2009unified}.
This remaining optimization can then be achieved via  standard numerical optimization schemes. For example a Newton-scheme as detailed  in \citet{friston2007variational} can be employed.
In such a scheme, the variational expectation parameter $\mu_j$ is iteratively optimized according to
\begin{equation}
\mu_j^{(i+1)} = \mu_j^{(i)} -  H_{I_j}(\mu_j^{(i)})^{-1} J_{I_j} (\mu_j^{(i)}),
\end{equation}
where  $J_{I_j}$ and $H_{I_j}$  denote the Jacobian and Hessian of $I_j$, the superscript $(i)$ refers to the $i$-th iteration step, and the iterations are carried out for every parameter subset until convergence. Naturally, this scheme can be finessed in terms of its convergence properties. As an example, \citet{ostwald2016probabilistic} provide an algorithm based on a global newton scheme with Hessian modification using additional conditions on the step-sizes in the context of DCM-modelling of ERP-data.

At this point we want to note  that variational Laplace in the case of a single parameter set retrieves what is - although somewhat imprecise - commonly referred to as the Laplace approximation \citep{beal2003variational,kass1995bayes}.
 In this approach, the posterior distribution is approximated by a Gaussian centered on a given maximum-a-posteriori (MAP)  estimate (of the true posterior) with covariance given by the  negative inverse Hessian-matrix evaluated at this estimate.
Thus, if we are using a single parameter set, the objective for the corresponding variational parameter $\mu$ is to find
\begin{equation}
\mu^* = \underset{\mu} {\arg \max} \ I(\mu) =  \underset{\mu} {\arg \max} \   \ln p(y,\mu) =  \underset{\mu} {\arg \max} \   \ln p(\mu|y).
\end{equation} 
Furthermore, the covariance parameter $\Sigma$ is then analytically set to 
\begin{equation}
\Sigma^*  = -\frac{\partial^2}{\partial \theta \partial \theta^T} \ln p(y, \theta) \bigg\rvert_{\theta = {\mu^*}}^{-1} =  -\frac{\partial^2}{\partial \theta \partial \theta^T}\ln p(\theta|y) \bigg\rvert_{\theta = {\mu^*}}^{-1} = -H_{L_y}(\mu^*)^{-1},
\end{equation}
such that the resulting optimal density $q_{\mu^*,\Sigma^*}(\theta)$ corresponds to the  Laplace approximation.

\section {Nonlinear Transform Models}\label{sec:model}

In the following, we want to give an important example for the preceding scheme. To this end, consider the model
\begin{equation}
\begin{aligned}
y &= f(\theta) + e \\
e &\sim \mathcal{N}(0, Q(\lambda)).
\end{aligned}
\end{equation}
This is a transform model with two parameters: The data $y$ are interpreted as the (possibly nonlinear) transform $f$ of some hidden parameter $\theta$ under the addition of Gaussian noise. Furthermore, the covariance structure of this noise is determined by another hidden parameter $\lambda$.
We can embed this in a joint model of the form
\begin{equation}
\begin{aligned}
p(y,\theta,\lambda) &= p(y|\theta,\lambda) p(\theta)p(\lambda) \\
p(y|\theta,\lambda) &=  \mathcal{N}(y; f(\theta), Q(\lambda)) \\
p(\theta) &= \mathcal{N}(\theta;m_\theta,S_\theta) \\
p(\lambda) &= \mathcal{N}(\lambda;m_\lambda,S_\lambda).\\
\end{aligned}
\end{equation}
Here, we have assumed Gaussian priors on the parameters $\theta,\lambda$, but other forms of the prior are equally possible. In principle, one could apply free-form variational inference to such models, but as we briefly discuss in \autoref{sec:varlapderivation_model}, this may not yield significant facilitation of inference. Thus, the use of an approximate scheme like variational Laplace may be motivated. Indeed, the above model, in its general form, comprises many of the applications of variational Laplace that may be found in the literature \citep{friston2008multiple,friston2008Bayesian,friston2007variational,daunizeau2009variational}.
In particular, this model contains the important special case of a GLM with a nonspherical covariance matrix parameterized as the linear combination of a given set of known covariance basis matrizes  \citep{starke2017,friston2008multiple}.

To apply variational Laplace to the above model, one assumes Gaussian variational densities $q(\lambda) = \mathcal{N}(\lambda;\mu_\lambda,\Sigma_\lambda), q(\theta) = \mathcal{N}(\theta;\mu_\theta,\Sigma_\theta)$ and updates the variational  parameter according to the scheme discussed in the last section. Again, we will only state the update equations, for their derivation consult  \autoref{sec:varlapderivation_model}.
Using the notation of the previous exposition, the variational Laplace update scheme for the variational expectation parameter $\mu_\theta$ with given $\mu_\lambda^*,\Sigma_\lambda^*$ becomes
\begin{equation}
\mu_\theta^*  = \underset{\mu_\theta}{\arg \max} \  I(\mu_\theta) \\,
\end{equation}
where
\begin{equation}\label{varlaptheta}
\begin{aligned}
I(\mu_\theta) &=  L_y(\mu_ \theta,\mu_\lambda^*) + \frac{1}{2}   \operatorname{tr}\left(\Sigma_\lambda^* H_{L_y}^\lambda \left(\mu_\theta,\mu_\lambda^*\right)\right) \\
 = &-\frac{1}{2} \left(y-f\left(\mu_\theta\right)\right)^T Q\left(\mu_\lambda^*\right)^{-1} \left(y-f\left(\mu_\theta\right)\right) \\
		    & -\frac{1}{2} \left(\mu_\theta - m_\theta\right)^T S_\theta^{-1}\left(\mu_\theta - m_\theta\right) \\
		    & + \frac{1}{2} \operatorname{tr} \left(\Sigma_\lambda^*   \frac{\partial^2}{\partial \lambda \partial \lambda^T} \left(-\frac{1}{2} \left(y-f\left(\mu_\theta\right)\right)^T Q(\lambda)^{-1}\left(y-f\left(\mu_\theta\right)\right)\right)  \bigg\rvert_{\lambda = \mu_\lambda^*}\right) +C,
\end{aligned}
\end{equation}
and we have ignored terms not depending on $\mu_\theta$  as these will not affect the optimization of $I(\mu_\theta)$. After an optimal parameter is found according to this objective, the covariance parameter is set analytically to 
\begin{equation}
\Sigma_\theta^* = - H_{L_y}^\theta(\mu_\theta^*,\mu_\lambda^*)^{-1} \approx \left(J_f\left(\mu_\theta^*\right)^T Q\left(\mu_\lambda^*\right)^{-1} J_f\left(\mu_\theta^*\right)  + S_\theta^{-1}\right)^{-1}.
\end{equation}
This approximated version of the Hessian is obtained by dropping dependencies on higher derivatives of $f$, as is advocated in \citet{friston2007variational} in light of the previously discussed requirement of the covariance matrix being positive definite.
Similarly, the update scheme for $ \mu_\lambda$ is given by
\begin{equation}
\mu_\lambda^*  = \underset{\mu_\lambda}{\arg \max} \  I(\mu_\lambda) \\,
\end{equation}
with
\begin{equation}
\begin{aligned}
I(\mu_\lambda) &=  L_y(\mu_ \theta^*,\mu_\lambda) + \frac{1}{2}   \operatorname{tr}\left(\Sigma_\theta^* H_{L_y}^\theta \left(\mu_\theta^*,\mu_\lambda\right)\right) \\
= &-\frac{1}{2} \left(y-f\left(\mu_\theta^*\right)\right)^T Q\left(\mu_\lambda\right)^{-1} \left(y-f\left(\mu_\theta^*\right)\right) \\
&-\frac{1}{2} \ln \det Q(\mu_\lambda) \\
		    & -\frac{1}{2} \left(\mu_\lambda - m_\lambda\right)^T S_\lambda^{-1}\left(\mu_\lambda - m_\lambda\right) \\
		    & - \frac{1}{2} \operatorname{tr} \left(\Sigma_\theta^*   \left(J_f\left(\mu_\theta^*\right)\right)^T Q\left(\mu_\lambda\right)^{-1} J_f\left(\mu_\theta^*\right)  + S_\theta^{-1}\right) + C,
\end{aligned}
\end{equation}
where the Hessian with respect to $\theta$ was again approximated by dropping second derivatives of $f$.
As before, after identifying the optimal $\mu_\lambda^*$, the covariance parameter is set to
\begin{equation}
\begin{aligned}
\Sigma_\lambda^* &= - H_{L_y}^\lambda(\mu_\theta^*,\mu_\lambda^*)^{-1}  \\&= -\left( \frac{\partial^2}{\partial \lambda \partial \lambda^T} \left(-\frac{1}{2} \left(y-f\left(\mu_\theta^*\right)\right)^T Q\left(\lambda\right)^{-1}
\left(y-f\left(\mu_\theta^*\right)\right) -\frac{1}{2} \ln \det Q(\lambda)\right) \bigg\rvert_{\lambda = \mu_\lambda^*} - S_\lambda^{-1}\right)^{-1}.
\end{aligned}
\end{equation}

The above set of update equations fully describes the variational Laplace scheme. However, we note that in the literature there are examples in which this scheme may be slightly varied, especially in the case that the transform function $f$ is actually linear \citep{starke2017,friston2008Bayesian,friston2008multiple}. In \autoref{sec:disambiguation}, we give a disambiguation on some of the variations this approach naturally provides and also discuss related methods \citep{chappell2008variational}.
Achieving the maximization of the functions $I(\mu_\theta),I(\mu_\lambda)$ commonly referred to as "variational energies" may be achieved by a Newton scheme, as we saw in the last section. The first and second derivatives needed for these updates may be found in \citet{friston2007variational}.

\section {Model Examples}

To simplify the following discussions, we want to examine two special cases of the above model in more detail. Due to their simple structure, these models will facilitate the analytic treatment of variational Laplace in the upcoming sections. 
\subsection{Single Parameter Model}\label{sec:singleparam}
For our first model we consider the case where there is just one unknown, real valued parameter $\theta$. That is, we assume there is a known, possibly nonlinear, transform function \begin{equation}f: \mathbb{R} \to \mathbb{R}\end{equation} that
conveys the mean of a sample from a normal distribution with known variance. Thus, the model can be stated as 
\begin{equation}
\begin{aligned}
& \\
&p(y|\theta) = \mathcal{N}(y;f(\theta) \mathbf {1}_n,s^2 I_n) = \prod_{i=1}^{n} \mathcal{N}(y_i;f(\theta),s^2) \\
&p(\theta)  =  \mathcal{N}(\theta; m_\theta, s_\theta^2).
\end{aligned}
\end{equation}
In this case, a mean-field assumption is unnecessary: We approximate the posterior by a single variational density 
\begin{equation}
p(\theta|y)  \approx q_{\mu_\theta,{\sigma_\theta}}(\theta) = \mathcal{N}(\theta; m_\theta,{s_\theta}^2).
\end{equation}
As  noted before, this is a special case of variational Laplace where it corresponds to the "classical" Laplace approximation. In the notation of the previous section, we here have
\begin{equation}
I(\mu_\theta) = L_y(\theta) =  \ln p(y,\mu_\theta).
\end{equation}
In particular, the variational mean $\mu_\theta$ coincides with a MAP estimate. Maximization of the posterior can then for example be achieved via a Newton scheme. The derivatives that are necessary to this end, after dropping higher-order derivatives of $f$, read
\begin{equation}
\begin{aligned}
&\frac{\partial}{\partial \theta} L_y(\theta)  =  \frac{m_\theta-\theta}{s_\theta^2} - \frac{n}{s^2} \left(f'(\theta)(f(\theta) - \overline{y})\right) \\
& \frac{\partial}{\partial^2 \theta} L_y(\theta)  \approx \frac{-(n f'(\theta)^2s_\theta^2 + s^2)}{s^2 s_\theta^2}, 
\end{aligned}
\end{equation}
with $\overline{y} = \frac{1}{n}\sum\limits_{i=1}^n y_i$. Hence, the approximated update scheme for the variational parameters in a Newton scheme, where $\mu_\theta$ is given from a previous iteration, can be summed up as
\begin{equation}
\begin{aligned}
&\mu_\theta^* = \mu_\theta +\frac{s^2 s_\theta^2}{n f'( \mu_\theta)^2 s_\theta^2 + s^2}\left(\frac{m_\theta-\mu_\theta}{s_\theta^2} - \frac{n}{s^2} (f'(\mu_\theta)(f(\mu_\theta) - \overline{y})\right) \\
& \sigma_\theta^* = \frac{s^2 s_\theta^2}{n f'( \mu_\theta^*)^2 s_\theta^2 + s^2}.
\end{aligned}
\end{equation}
In our treatment of this model, we will assume that the above scheme is indeed successful, in the sense that a maximum of $\ln p(y,\theta)$ is correctly identified.

\subsection{Linear Transform Model with Unknown Noise Parameter}\label{sec:linear}

The second example will serve to give an intuition on how variational Laplace will differ from MAP if there is more than one parameter, and it will also be possible to 
study the asymptotics in this example. 
The model is given by
\begin{equation}
\begin{aligned}
p(y_i|\theta,\lambda)  &= \mathcal{N}(y_i; a\theta, \exp(\lambda)) \\
p(\theta) &= \mathcal{N}(\theta;m_\theta,s_\theta^2) \\
p(\lambda) &= \mathcal{N}(\lambda;m_\lambda,s_\lambda^2).\\ 
\end{aligned}
\end{equation}
Here, the transform is simply a linear function of  $\theta$, and a second parameter $\lambda$ parameterizing the variance is included. Although naturally this model could be treated in a simpler way and the $\operatorname{ELBO}$ could be evaluated directly,
we want to apply the update equations of variational Laplace according to the scheme described previously. To make the scheme amenable to further analysis, we derive the updates in closed-form in \autoref{sec:linearupdates}.
In one iteration step, with $(\mu_\theta,\sigma_\theta,\mu_\lambda,\sigma_\lambda)$ given by the previous iteration, these update equations read
\begin{equation}\label{updatelinear}
\begin{aligned}
&\mu_\theta^* =  \frac{   a (1+ \frac{\sigma_\lambda}{2})  \frac{\sum_{i=1}^{n} y_i}{n}  + \frac{\exp(\mu_\lambda)m_\theta}{n s_\theta^2}}{ a^2   (1+ \frac{\sigma_\lambda}{2}) +\frac{\exp(\mu_\lambda)}{n s_\theta^2}}\\
&\sigma_\theta^* =\frac{\exp(\mu_\lambda)s_\theta^2}{a^2ns_\theta^2 + \exp(\mu_\lambda)} \\
& \mu_\lambda^* = W\left( n\frac{s_\lambda^2}{2} \left( \frac{\sum_{i=1}^{n} (y_i - a \mu_\theta^*)^2}{n}+a^2  \sigma_\theta^*\right) \exp\left( n \frac{s_\lambda^2}{2} - m_\lambda \right) \right) -  ( n \frac{s_\lambda^2}{2} - m_\lambda)\\
& \sigma_\lambda^* =  \frac{2 \exp(\mu_\lambda^*)s_\lambda^2}{\sum_{i=1}^{n}(y_i -a\theta)^2 s_\lambda^2 + 2 \exp(\mu_\lambda^*)},
\end{aligned}
\end{equation}
where $W$ is Lamberts $W$-function that inverts the function $x \mapsto x \ \exp(x)$ on the nonnegative real line, consult \autoref{sec:lambertw} for further details on this function.

\section{Frequentist Asymptotics}
\subsection {Point Estimators}
In the following, we will define quality criteria for point estimators in the frequentist sense \citep{casella2002statistical,ruschendorf2014mathematische}. To do so, we will start with a brief intuition on frequentist point estimation. In a typical practical scenario, given some samples (observed data), 
one would like to make inference about an underlying "true" distribution the samples have been drawn from. 
Often, one has a vague notion of the general form that this distribution should have, and is left to specify a set of parameters such that the distribution matches the data best in some sense. Thus, one has to decide how to determine this unknown parameter, that is, which estimator to use.  In  simple situations, there is often a natural choice for the "best guess" - consider for example the empirical mean of a sample from a Gaussian distribution as an estimator for the mean parameter of the underlying true distribution.
In more contrived situations, however, it may not be immediately clear which estimator should be used. Quality criteria of point estimators  provide the possibility to judge and compare different estimators and serve as theoretical justification of their use.

To formalize these notions, we assume that we observe the realization of a random variable $y$, and, for our concerns, we make the additional assumption that $y$ is made up of $n$  independent, identically distributed random variables, such that $y = (y_1,...y_n)$.
Furthermore, we assume that this distribution is fixed up to a parameter $ \theta \in \mathbb{R}^d$, such that all admissible probability measures are in the family
\begin{equation}
\mathcal{P} = \{\mathbb{P}_\theta | \theta \in \Theta \subset \mathbb{R}^d\}.
\end{equation}
We further assume that all $\mathbb{P}_\theta$ admit densities $p(y_1|\theta)$ and that there always is one true, unknown parameter value $\theta_0 \in \Theta$, that indeed correctly 
specifies the distribution of the data.  In the following, if not stated explicitly, we will omit the subscript and will always refer to the probability measure under the true parameter, that is $\mathbb{P} := \mathbb{P}_{\theta_0}$. 
One classical goal of frequentist point estimation in this situation is to estimate the true parameter.
That is, if we assume $y_1,y_2,...y_n \overset{i.i.d.}{\sim} p(y_1|\theta_0)$,
we seek to find a (deterministic) function 
\begin{equation}
\begin{aligned}
\hat{\theta}_n:  \mathbb{R}^m \times \mathbb{R}^m ... \times \mathbb{R}^m &\to \mathbb{R}^d \\
\end{aligned}
\end{equation}
which may depend on $n$, such that when we apply this function to $y$, $\hat{\theta}_n(y)$ is an estimator for the true parameter $\theta_0$.

To assess the quality of such estimators, various criteria have been studied. Classical criteria like unbiasedness and efficiency characterize the behaviour of an estimator in a finite setting \citep{casella2002statistical}.
These criteria will not be useful for our purposes, since, as we explain in \autoref{sec:bayescriteria}, point estimators in the Bayesian setting are typically motivated by a different set of quality criteria than frequentist estimators \citep{murphy2012machine,ruschendorf2014mathematische, van2000asymptotic}. 
Furthermore, in some situations, these finite sample criteria may result in "strange" optimal estimators or not yield comparability of two different estimators \citep{van2000asymptotic}.   
Instead, we want to concentrate on properties that are motivated by asymptotic notions.

\subsection{Asymptotic Properties of Point Estimators}\label{sec:freqcriteria}

Asymptotic statistics is concerned with limiting properties in statistics, typically this can be thought of as the "large sample" limit - the number of datapoints $n$ going to infinity. The interest in asymptotic properties in statistics arises for at least two reasons \citep{van2000asymptotic}:
First, many statistical estimators and tests that are difficult to analyze in a finite setting - for example, because their distribution does not take a manageable closed-form - behave like well-understood distributions with growing data size. Acclaimed limit results like the central limit theorem may be used to derive normality in many situations - giving statisticians the possibility to derive useful approximations in the limit. Secondly, as stated before, quality properties defined in the finite-sample setting may not yield an useful decision regarding an estimator. In this situation, asymptotic properties may still allow conclusions on the quality of an estimator, and they do so for a wider class of models.

To understand the following statements, recall the modes of stochastic convergence: Let $x^{(n)}$ be a sequence of random variables and $x$ be a fixed random variable. Then, $x^{(n)}$ converges to $x$ 
\begin{enumerate}
\item in distribution, if $\lim\limits_{n \to \infty} \mathbb{E}[ f(x^{(n)})] = \mathbb{E}[ f (x)]$, for all continuous, bounded $f$. Notation: $x^{(n)} \overset{d}{\longrightarrow} x$

\item in $\mathbb{P}$-Probability, if $\mathbb{P}\left[ \left|x^{(n)}-x\right| <\varepsilon\right] \to 0, \ \forall \ \varepsilon > 0$ . Notation: $x^{(n)} \overset{\mathbb{P}}{\longrightarrow} x$

\item almost surely (under $\mathbb{P}$), if $\mathbb{P}\left[ \underset{n \longrightarrow \infty}{\lim} x^{(n)} \neq x\right] = 0$ . Notation: $x^{(n)} \overset{\text{a.s.}}{\longrightarrow}x$

\end{enumerate} 
Below, the sequences of random variables in question will be of the form $\hat{\theta}_n(y_1,...,y_n)$ for a sequence of deterministic functions $\hat{\theta}_n$. For notational clarity, the dependence on the data is typically omitted. We thus write $\hat{\theta}^{(n)} =  \hat{\theta}_n(y_1,...,y_n)$, and we will further on simply refer to such a sequence of random variables as a sequence of point estimators.

With the previous definitions in mind, we are ready to state asymptotic quality criteria for point estimators.
Note that all upcoming definitions should be understood to hold regardless of the particular
value of the true parameter.
A sequence of estimators $\hat{\theta}^{(n)}$ is said to be asymptotically consistent if it converges in probability to the true parameter, that is
\begin{equation}
\hat{\theta}^{(n)} \overset{\mathbb{P}}{\longrightarrow} \theta_0.
\end{equation}
This criterion intuitively tells us, that as the sample size grows larger, our estimator will return the true parameter with arbitrarily high probability. In many situations, it can be proved that there exists a consistent estimator. Furthermore, under typical regularity conditions, the maximum-likelihood-estimator (MLE)
$$ \hat{\theta}^{(n)} = \underset{\theta \in \Theta}{\arg \max} \  p(y|\theta) $$
 is consistent. This property of the MLE is often used to prove the consistency of other estimators, which will also be relevant in the case of variational inference \citep{wang2019frequentist}. 
Consistency of an estimator establishes the true parameter as the asymptotic limit (in probability) of an estimator. However, it may be beneficial to also characterize the distributional shape these estimators will take in the limit. To obtain these distributions, the sequence of estimators needs to be rescaled and centered appropriately. In our setting, this appropriate rescaling will be given by $\sqrt{n}$, resembling classical results like the central limit theorem. We say that a sequence of estimators $\hat{\theta}^{(n)}$ is  asymptotically normal if for some $\Sigma \in \mathbb{R}^{d\times d} $ 
\begin {equation}
\begin{aligned}
&\sqrt{n}(\hat{\theta}^{(n)}-\theta_0) \overset{d}{\longrightarrow} G \sim \mathcal{N}(0,\Sigma),
\end{aligned}
\end{equation}
that is the sequence of estimators, after rescaling and centering them on the true parameter, converges in distribution to a normal distribution. This property of an estimator is, among other reasons, desirable, as it allows the construction of asymptotically valid confidence intervals based on the normal distribution, 
which is particularly convenient as this is among the best understood distributions. \citep{van2000asymptotic}.
In many situations, one can give a lower bound to the asymptotic covariance matrix: 
The Fisher information matrix $\mathcal{I}$ of a differentiable model $p(y|\theta)$ is given by 
 \begin{equation}
\mathcal{I}(\theta_0) := \mathbb{E}_{\theta_0}\left[\frac{\partial \ln p(y|\theta)}{\partial \theta}\left(\frac{\partial \ln p(y|\theta)}{\partial \theta}\right)^T(\theta_0) \right],\end{equation}
provided that the right hand side exists.
If the model is twice differentiable and we can exchange the order of integration and differentiation, that is
\begin{equation}\label{fisherconditions}
\begin{aligned}
 \frac{\partial}{\partial \theta} \int p(y|\theta) dy &= \int \frac{\partial}{\partial \theta} p(y|\theta)dy \\
  \frac{\partial^2}{\partial \theta \partial \theta^T}    \int  p(y|\theta) dy &= \frac{\partial}{\partial \theta}   \int  \frac{\partial}{\partial \theta} p(y|\theta) dy,
\end{aligned}
\end{equation}
then the Fisher information matrix can also be expressed as \citep{shao2003mathematical}
\begin{equation}
\mathcal{I}(\theta_0) = -\mathbb{E}_{\theta_0}\left[\frac{\partial^2 \ln p(y|\theta)}{\partial \theta \partial \theta^T}(\theta_0)\right].
\end{equation}
One reason for the crucial importance of this quantity in asymptotic statistics is that under regularity conditions, one has that the covariance matrix of the limiting distribution for a consistent, asymptotically normal estimator fulfills
\begin{equation} \Sigma \geq \mathcal{I}(\theta_0)^{-1},\end{equation}
where the inequality is to be understood in the sense of the difference being positive definite \citep{van2000asymptotic}.
This fact, together with results assuring that a normal distribution is in many ways the "best" limit distribution for a sequence of estimators \citep{van2000asymptotic}, gives rise to the notion of asymptotic efficiency:
A sequence of estimators  $(\hat{\theta}^{(n)})$ is called asymptotically efficient, if 
\begin{equation}\label{efficiency}
\begin{aligned}
 &\sqrt{n} \left(\hat{\theta}^{(n)} - \theta \right) \overset{d}{\longrightarrow}  G \sim \mathcal{N}(0, \mathcal{I}(\theta_0)^{-1}).
\end{aligned}
\end{equation}
Importantly, a MLE will usually be asymptotically efficient - that is, we can expect that there exists at least one estimator which is optimal in the sense of this definition. Thus it is desirable for estimators to have asymptotic variance close to this so called asymptotic Cramer-Rao bound.
Furthermore, we will see that the Fisher information matrix also plays a role in the asymptotic behaviour of Bayesian posterior distributions.

\subsection {Frequentist Treatment of Bayesian Quantities}\label{sec:bayescriteria}
As stated before, variational inference is situated in a Bayesian setting, such that the parameters $\theta$ are embedded in a probabilistic model. In the Bayesian framework, point estimators for such parameters are typically justified by minimization of posterior loss \citep{murphy2012machine}.
That is, one specifies a loss-function 
 \begin{equation}
\begin{aligned}
l : \Theta \times \Theta \to \mathbb{R};(\hat{\theta},\theta) \mapsto l(\hat{\theta},\theta),
\end{aligned}
\end{equation}
such that $l$ penalizes the deviation of an estimate $\hat{\theta}$ from $\theta$ in an adequate way for the problem. A Bayes-estimator $\hat{\theta}^*(y)$   with respect to the loss-function $l$ then solves
\begin{equation}
\hat{\theta}^*(y) =  \underset{\hat{\theta} \in \Theta}{\arg \min} \   \int l(\hat{\theta},\theta) p(\theta|y) d\theta .
\end{equation}
Hence, the estimate minimizes the loss that is expected under the posterior distribution, given the data $y$. If we choose quadratic loss for $l$, that is $l(\hat{\theta},\theta) = (\hat{\theta}-\theta)^2$, then the unique minimizer of the posterior loss is the posterior mean: ${\hat{\theta}^*(y) = \int \theta p(\theta|y) d\theta =: \overline{\mu}(y)}$.  This result motivates a natural choice for point estimation in variational inference. Assume we have derived an optimal variational density $q^* \in \mathcal{Q}$, such that it minimizes the Kullback-Leibler divergence to the posterior. 
The point estimate of choice would then correspondingly be the variational mean, that is $\hat{\theta}^*(y) = \int \theta q^*(\theta) d\theta =: \mu(y)$. 

If we now want to assess the quality of variational inference in terms of frequentist point estimator quality, it is necessary to assume a true parameter $\theta_0$ that 
governs the underlying distribution of the data.  Note that this is not contradicting the Bayesian point of view.
Assume that the data indeed comes from a true distribution $p(y|\theta_0)$.
The Bayesian procedure then reflects our uncertainty on the true parameter $\theta_0$, rather than an actual "true randomness" in the parameter. 
This notion makes the Bayesian point estimates amenable to frequentist analysis. For example, in the light of the preceding discussion, we could readily analyze the theoretical properties of the sequence of point estimators
\begin{equation}
\begin{split}
\overline{\mu}^{(n)} = \int \theta p(\theta|y_1,...,y_n) d\theta \\ y_i  \overset{\text{i.i.d.}}{\sim} p(y_1|\theta_0).
\end{split}
\end{equation}

We want to conclude this section by mentioning two important results regarding Bayesian asymptotics. When analyzing Bayesian inference from the frequentist perspective, one is not restricted to point estimators. 
As the main quantity of interest in Bayesian statistics would usually be the full posterior, rather than a point estimate, it is of interest to analyze the behaviour of this distribution in the large sample limit. 
The following theorems concern the asymptotics of this distribution \citep{van2000asymptotic}.
Formally, one analyses the posterior measure $\Pi(\cdot|y_1,...,y_n)$, which is random as it depends on the random variables $y_1,...,y_n$.
Doob's consistency theorem then assures that this posterior measure will collapse to a point mass at the true parameter, with growing $n$. It states that under any prior for almost every $\theta_0$,
the posterior $\Pi(\cdot|y_1,...,y_n)$ converges in distribution to $\delta_{\theta_0}$ under $\mathbb{P}$. 
The Bernstein-von-Mises theorem goes even further and gives an intuition on the asymptotic shape of the posterior(under appropriate rescaling). Roughly, it states that the posterior will converge to a normal distribution in the limit.
To make this statement more precise, a distance measure for this sort of convergence is needed. The total variation distance between the posterior measure and another measure $Q$ is defined as
\begin{equation}
\left\Vert \Pi(\cdot|y_1,...,y_n) - Q(\cdot)\right\Vert_{TV} = \underset{B}{\sup} \left|\Pi(B|y_1,...,y_n) - Q(B)\right|
\end{equation}
In the case that both are associated with continuous densities $p(\theta|y), q(\theta)$, this can equally be expressed as
\begin{equation}
\left\Vert \Pi(\cdot|y_1,...,y_n) - Q(\cdot) \right\Vert_{TV} = \frac{1}{2} \int \left| p(\theta|y_1,...,y_n) - q(\theta)\right| d\theta .
\end{equation}
Since we will only be dealing with such densities here, we will notationally identify the probability measures and the respective densities, which will allow a more concise presentation of results and is in line with current literature \citep{wang2019frequentist}.
Under regularity conditions, the Bernstein-von-Mises theorem now states that
\begin{equation}\label{bernstein_von_mises}
\left\Vert p(\theta|y_1,...,y_n) -\mathcal{N}(\theta; \hat{\theta}^{(n)}, \frac{1}{n} \mathcal{I}(\theta_0)^{-1})\right\Vert_{TV} \overset{\mathbb{P}}{\longrightarrow} 0 ,
\end{equation}
where $\hat{\theta}^{(n)}$ is a maximum-likelihood-estimator for the true parameter. 
Another formulation of the Bernstein-von-Mises theorem is
\begin{equation}
\left\Vert \widetilde{p}_h(h|y_1,...,y_n) - \mathcal{N}\left(h;\sqrt{n}\left(\hat{\theta}^{(n)}-\theta_0\right),  \mathcal{I}(\theta_0)^{-1}\right)\right\Vert_{TV} \overset{\mathbb{P}}{\longrightarrow} 0 ,
\end{equation}
where $\widetilde{p}_h$ corresponds to the posterior of the rescaled, centered parameter $h = \sqrt{n}(\theta-\theta_0)$ \citep{van2000asymptotic}. Note how this last formulation resembles equation \eqref{efficiency}.
The Bernstein-von-Mises theorem  may give an intuition on the asymptotics of variational inference: If the posterior in the limit behaves like a normal distribution, we would expect that under some conditions the best approximating variational distribtuion would also be normal.
We will elaborate further on this in the next section. Importantly, this limiting behaviour also transfers to point estimates that are derived from the posterior distribution, as long as the respective loss function is well-behaved. This includes as a special case the posterior mean. Indeed, we have that 
\begin{equation}
\sqrt{n} (\overline{\mu}^{(n)}-\theta_0) \overset{d} {\longrightarrow} \mathcal{N}(0,\mathcal{I}(\theta_0)^{-1}),
\end{equation}
which also implies that the posterior mean is consistent \citep{van2000asymptotic}.

\section{Frequentist Properties of Variational Inference}\label{sec:review}

Although the development of variational inference in the machine learning community has been taking place for quite some time \citep{hinton1993keeping,anderson1987mean}, research on its theoretical properties has only recently gained more attention.
Early papers could show some positive as well as negative results, but were often restricted to certain models only: \cite{wang2004convergence} showed that for exponential family models with missing values, the variational estimates converge to the true value as long as the starting point used for the algorithm is sufficiently close to the true value. Furthermore they proved the asymptotic normality of these estimates. Later, similar results for normal mixture models have been achieved \citep{wang2006convergence}. \cite{hall2011theory} establish consistency and give convergence rates for a Poisson mixture model. \cite{celisse2012consistency} could show consistency of variational estimators in graph stochastic block models. On the other hand, \cite{wang2004lack} derived that in state space models, variational estimates are not necessarily consistent. \cite{wang2005inadequacy} furthermore noticed that variational inference will often underestimate the variance, in the sense that the covariance of the variational distribution is too narrow compared to the corresponding asymptotic covariance of the posterior - that is, the inverse Fisher information.
More recently, \cite{westling2015establishing} proved consistency of variational inference in the setting of i.i.d. samples with local latent variables by means of identifying it as a special case of profiled M-estimation.

The most general work on the asymptotic properties of variational inference has, to our knowledge, been presented in  \cite{wang2019frequentist}. This work gives quite general conditions for the consistency of variational inference. The consistency results in this paper concern the variational point estimates as well as the asymptotics of the posterior distribution in the sense of total variation distance. They hence confirm the intuition that we have briefly discussed in the previous section: As the posterior distribution will asymptotically be approaching a normal distribution, the best approximating variational distributions will do so as well. Specifically, they will converge to the member of the variational family that best approximates the limit distribution for the true posterior, that is the Gaussian given in equation \eqref{bernstein_von_mises}. This furthermore gives a general proof of the underdispersion of variational densities obtained from mean-field-families: As long as the true posterior does not factorize as the variational densities do, the respective best approximating density will underestimate the true posterior variance. Since these results give a fairly general notion of the asymptotic properties of variational inference, we want to discuss them to some extent. The main conditions needed to establish these results are detailed in \autoref{sec:bleiresults}; notably they typically hold for i.i.d. observations in regular parametric models, we will restrict the following presentation of the results to this special case.
In the following, consider a sequence of optimal variational densities
\begin{equation}
q^{*(n)} = \underset{q \in \mathcal{Q}^d}{\arg \min}    \operatorname{D_{KL}}\left(q(\theta)||p(\theta|y_1,...,y_n)\right).
\end{equation}
The first result gives an equivalent of Doob's consistency theorem:
\begin{equation}
q^{*(n)} \overset{d}{\longrightarrow} \delta_{\theta_0}   \ \mathbb{P} - a.s.
\end{equation}
Hence, as is the case for the true posterior, the variational distributions will eventually collapse to point masses centered at the true parameter. 
The next result is an equivalent to the Bernstein-von-Mises theorem. For this, we need to define the closest member in the variational family to the limiting normal distributions:
\begin{equation}\label{optimalGaussian}
g^{(n)} = \underset{q \in \mathcal{Q}^d}{\arg \min}   \operatorname{D_{KL}}\left(q(\theta)|| \mathcal{N}(\theta; \hat{\theta}^{(n)}, \frac{1}{n} \mathcal{I}(\theta_0)^{-1})\right)
\end{equation}
We then get that the optimal variational density converges in total variation distance to $g^{(n)}$:
\begin{equation}\label{variationalbernstein}
\left\Vert q^{*(n)} (\theta) - g^{(n)}(\theta)\right\Vert_{TV} \overset{\mathbb{P}}{\longrightarrow}  0.
\end{equation}
Analogous to the Bernstein-von-Mises theorem, this result may also be stated as
\begin{equation}\label{rescaledg}
\left\Vert \widetilde{q}_h^{*(n)} (h) - \widetilde{g}_h^{(n)}(h)\right\Vert_{TV} \overset{\mathbb{P}}{\longrightarrow} 0,
\end{equation}
again with the reparameterization $h = \sqrt{n}(\theta-\theta_0)$.
Note that the approximating density $g^{(n)}$ will give an exact match if and only if the variational family contains the respective Gaussian. 
If we do not include all Gaussian distributions in our variational family, then we cannot expect the variational densities to retrieve the correct precision in the above sense. For example, when the variational family due to the mean-field assumption does assume independence between components that have nonnegative entries in the inverse Fisher information matrix $\mathcal{I}(\theta_0)^{-1}$ ,
one can see that $g^{(n)}$ underestimates the covariance in terms of differential entropy. To be precise, this means that 
\begin{equation}\label{underdisp}
\mathbb{H}\left[g^{(n)}\right] \leq \mathbb{H}\left[\mathcal{N}(\theta; \hat{\theta}^{(n)}, \frac{1}{n} \mathcal{I}(\theta_0)^{-1})\right].
\end{equation}
We show this in an example where we assume a fully-factorized Gaussian variational family in \autoref{sec:entropy}. Note that in this situation the mean will still be matched exactly by $g^{(n)}$.
As one could guess by now, for the point estimates obtained by the variational densities we correspondingly get consistency and asymptotic normality.
Precisely, the variational mean is consistent for the true parameter:
\begin{equation}
\mu^{(n)} \overset{\mathbb{P}}{\longrightarrow} \theta_0
\end{equation} 
Furthermore, we have that the rescaled variational mean converges to the rescaled MLE, which gives us the asymptotic normality (and efficiency) of the former:
\begin{equation} \begin{aligned}
&\sqrt{n}(\mu^{(n)}  - \theta_0) - \sqrt{n}(\hat{\theta}^{(n)} -\theta_0)   \overset{\mathbb{P}}{\longrightarrow} 0 \\
&\implies \sqrt{n}(\mu^{(n)}  - \theta_0) \overset{d}{\longrightarrow} \mathcal{N}(0,\mathcal{I}(\theta_0)^{-1})
\end{aligned}\end{equation} 
Thus we have seen, that under some regularity conditions, variational inference yields asymptotically consistent and efficient point estimators. This result justifies the widespread use of such procedures from a theoretical point of view at least for models that fulfill the assumptions.

\subsection{Applicability to Variational Laplace}

In the previous paragraph we have seen that under some general conditions, variational inference will be asymptotically consistent, as long as the variational family $\mathcal{Q}$ contains Gaussians. The results presented, however, rely on the crucial assumption that 
$$q^*(\theta) = \underset{q \in \mathcal{Q}}{\arg \min}    \operatorname{D_{KL}} \left(q(\theta)||p(\theta|y)\right),$$
in other words, the optimal variational density $q^*(\theta)$ that minimizes the Kullback-Leibler-divergence has correctly been identified.  Due to the approximations that are made to the objective, this assumption will in general not be justified in variational Laplace. 
That is, one will typically have that 
$$ \mathcal{N}(\theta;\mu^*,\Sigma^*) \neq \underset{\mu,\Sigma}{\arg \min}    \operatorname{D_{KL}} \left(\mathcal{N}(\theta;\mu,\Sigma)||p(\theta|y)\right),$$
where now $\mu^*,\Sigma^*$ refer to the optimal parameters that are obtained via the variational Laplace scheme \eqref{varlap}. To see this, it is enough to consider the special case of one parameter. As we have discussed before, in this case we have that
$$\mu^* = \underset{\theta}{\arg \max}  \ln p(y,\theta),$$
such that the variational expectation parameter corresponds to a MAP-estimate. Intuitively, one can contemplate that this objective is local in nature, whereas the minimization of Kullback-Leibler-divergence takes global properties of the posterior into account, such that these two objectives
should not coincide. In the \autoref{appendix:nonequivalence}, we show this non-equivalence in a simple case. Thus, to identify the asymptotic properties of variational Laplace, we will not be able to apply the results of \cite{wang2019frequentist} directly.
Instead, in the next section we first consider special cases of variational Laplace and then generalize our findings to some extent.

\section{Results}
In the following we proceed by studying asymptotic properties of variational Laplace in our example models. We will see that in these models, variational Laplace behaves well under some additional assumptions. Subsequently, we also state general conditions that are sufficient for variational Laplace to adhere to our asymptotic quality criteria. Throughout this section, we focus on presentation of results, derivations may be found in \autoref{sec:derivations}.
\subsection{Single Parameter Model}\label{sec:resultsingleparam}

As we have already seen, in the case of just one parameter variational Laplace reduces to MAP estimation. Thus, we can define the estimators in our first model as 
$$\mu^{(n)} = \underset{\theta \in \Theta} {\arg \max}     \ln p(y,\theta),$$
where we assume that these are correctly identified at every step by the algorithmic procedure employed. To establish consistency of such estimators, we can relate MAP to M-estimation \citep{van2000asymptotic}.
Many of the typical procedures in frequentist point-estimation are special cases of M-estimation. The general criterion of M-estimation is that the point estimates are (near) maximizers of a random function $M^{(n)}(\theta,y) $, that converges in some sense to a non-random function $M(\theta)$.
That is, one is dealing with estimators of the form $$ \mu^{(n)} = \underset{\theta}{\arg \max}\   M^{(n)}(\theta,y).$$
Furthermore, it will often be the case that $M^{(n)}(\theta,y)$ is the sample average of some function $m(\theta,y).$ In this case, the limit function is $$M(\theta) = \mathbb{E}_{\theta_0}[m(\theta,y)].$$
Maximum-likelihood-estimation is a classical, yet important example of M-estimation with $m(\theta,y) = \ln p(y|\theta)$.
It can be shown, that under some conditions on the random function $M^{(n)}$ and the nonrandom function $M$, the maximizers of the former will converge in probability to the maximizers of the latter.
For a MLE, this will typically be the case, such that one can expect the MAP estimate to be consistent too, given that the prior should not matter in the limit. Indeed, we find that the following conditions from \cite{van2000asymptotic}, which can be used to establish the convergence of the MLE are also sufficient to give the consistency of the MAP estimate, provided that the prior puts positive mass on $\theta_0$. 
\begin{proposition}\label{prop1}
Assume that $\ln p(\theta_0)$ is continuous, bounded from above and $ p(\theta_0) > 0$. Define $ M^{(n)}(\theta,y) = \frac{1}{n}\sum_{i=1}^{n} \ln \frac{p(y_i|\theta)}{p(y_i|\theta_0)}$, $M(\theta): = - \operatorname{D_{KL}}\left(p(y|\theta_0)||p(y|\theta)\right)$ and let $\mu^{(n)}$ be a maximizer of $M^{(n)}( \theta,y) + \frac{1}{n} \ln p(\theta)$. If we have that $$\sup_\theta \left| M^{(n)}(\theta,y) - M(\theta)\right| \overset{\mathbb{P}}{\longrightarrow} 0 $$ and $$\sup_{\theta \,:\, d(\theta,\theta_0) \,\geq\, \varepsilon} M(\theta) < M(\theta_0) , \ \forall \varepsilon $$ then $$\mu^{(n)} \overset{\mathbb{P}}{\longrightarrow} \theta_0.$$
\end{proposition}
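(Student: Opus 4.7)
The plan is to adapt the standard M-estimator consistency argument (in the spirit of Wald's theorem, e.g.\ Theorem 5.7 of \citet{van2000asymptotic}) and show that the prior term contributes only asymptotically negligibly, so the MAP essentially behaves like the MLE.

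First I would observe two simple facts that pin down the value of the limit objective at the truth. Since $\ln \frac{p(y_i|\theta_0)}{p(y_i|\theta_0)} = 0$, we have $M^{(n)}(\theta_0,y) = 0$ for every $n$ and every realization. Correspondingly $M(\theta_0) = -\operatorname{D_{KL}}(p(\cdot|\theta_0)\|p(\cdot|\theta_0)) = 0$. The well-separation condition then reads: for every $\varepsilon>0$ there exists $\delta(\varepsilon)>0$ with $M(\theta) \leq -\delta(\varepsilon)$ whenever $d(\theta,\theta_0)\ge \varepsilon$.

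Next I would exploit the defining inequality of the maximizer $\mu^{(n)}$, namely
\begin{equation*}
M^{(n)}(\mu^{(n)},y) + \tfrac{1}{n} \ln p(\mu^{(n)}) \;\ge\; M^{(n)}(\theta_0,y) + \tfrac{1}{n}\ln p(\theta_0),
\end{equation*}
together with the fact that $M^{(n)}(\theta_0,y)=0$, to conclude
\begin{equation*}
M^{(n)}(\mu^{(n)},y) \;\ge\; \tfrac{1}{n}\bigl(\ln p(\theta_0) - \ln p(\mu^{(n)})\bigr).
\end{equation*}
Here the boundedness-from-above hypothesis on $\ln p$ is used precisely to control the only term that is not manifestly $O(1/n)$: letting $K := \sup_\theta \ln p(\theta) < \infty$ and $p(\theta_0)>0$, we get $\ln p(\theta_0) - \ln p(\mu^{(n)}) \ge \ln p(\theta_0) - K$, a fixed finite constant, so the right-hand side is bounded below by a deterministic $o(1)$ quantity.

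Then I would transfer this inequality from the random objective $M^{(n)}$ to the limit $M$ using the uniform convergence hypothesis. Writing
\begin{equation*}
M(\mu^{(n)}) \;\ge\; M^{(n)}(\mu^{(n)},y) - \sup_\theta\bigl|M^{(n)}(\theta,y)-M(\theta)\bigr|,
\end{equation*}
combining with the lower bound above, and using that both the supremum deviation and the prior remainder tend to $0$ (the first in probability, the second deterministically), yields $M(\mu^{(n)}) \ge -\eta_n$ for some $\eta_n \overset{\mathbb{P}}{\to} 0$. Finally, for any $\varepsilon>0$, the event $\{d(\mu^{(n)},\theta_0)\ge \varepsilon\}$ is contained in $\{M(\mu^{(n)}) \le -\delta(\varepsilon)\}$, so
\begin{equation*}
\mathbb{P}\bigl[d(\mu^{(n)},\theta_0)\ge \varepsilon\bigr] \;\le\; \mathbb{P}\bigl[\eta_n \ge \delta(\varepsilon)\bigr] \;\longrightarrow\; 0,
\end{equation*}
giving $\mu^{(n)} \overset{\mathbb{P}}{\to} \theta_0$.

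The only genuinely subtle step is the middle one: making sure the prior term really can be absorbed as $o(1)$. This is exactly where the two assumptions on $p(\theta_0)$ (positivity, continuity) and the uniform upper bound on $\ln p$ pull their weight, guaranteeing both $\frac{1}{n}\ln p(\theta_0)\to 0$ and a uniform lower bound on $-\frac{1}{n}\ln p(\mu^{(n)})$ of order $O(1/n)$. Everything else is bookkeeping on top of Wald's classical argument, since the MAP differs from the MLE only through an additive $O(1/n)$ penalty that is dominated by the $o_{\mathbb{P}}(1)$ uniform convergence error.
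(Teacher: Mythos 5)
Your proof is correct, and it takes a noticeably different (and leaner) route than the paper. The paper's proof first introduces the maximum-likelihood estimator $\hat{\theta}^{(n)}_{\text{MLE}}$ of $M^{(n)}$, invokes its consistency via the M-estimation theorem, and then shows by a two-case contradiction argument that the gap $M^{(n)}(\hat{\theta}^{(n)}_{\text{MLE}},y) - M^{(n)}(\mu^{(n)},y)$ is $o_{\mathbb{P}}(1)$; this is where the continuity of the prior is needed, since $\ln p$ must be bounded below on a compact neighbourhood of $\theta_0$ containing the (random) MLE. You instead compare $\mu^{(n)}$ directly with the fixed point $\theta_0$: the maximizer inequality plus $M^{(n)}(\theta_0,y)=0$ gives $M^{(n)}(\mu^{(n)},y) \geq \tfrac{1}{n}\bigl(\ln p(\theta_0) - \sup_\theta \ln p(\theta)\bigr)$, a deterministic $O(1/n)$ lower bound, so $\mu^{(n)}$ is a near-maximizer with no reference to the MLE at all; you then rerun the standard Wald/van der Vaart argument (you could equally have cited the paper's M-estimation theorem with $R^{(n)} = c/n$ at that point). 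What your route buys is simplicity and slightly weaker hypotheses — you only use $p(\theta_0)>0$ and the global upper bound on $\ln p$, never its continuity — whereas the paper's detour through the MLE mainly serves to frame the MAP as a perturbation of the MLE, consistent with the narrative of the surrounding section. Both arguments are sound; yours is the more economical one.
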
 The conditions imposed here are usually too restrictive, and one can see that these will not hold in our example model if $f$ diverges in some direction. Thus, we will give another result that will give consistency for the case of our example model under mild conditions. Recall the model structure 
\begin{align*}
p(y_i|\theta) &= \mathcal{N}(y_i;f(\theta),\sigma^2)\\
p(\theta) &= \mathcal{N}(\theta;m_\theta,s_\theta^2).
\end{align*}
We then have the following result.
\begin{proposition}\label{prop2}
Assume that $f$ is a continuous function and that the model is identifiable, that is $\theta \neq \theta_0 \implies f(\theta_0) \neq f(\theta)$. Define $ M^{(n)}(\theta,y) = \sum_{i=1}^{n} \ln \frac{p(y_i|\theta)}{p(y_i|\theta_0)}$, $M(\theta): = - \operatorname{D_{KL}}\left(p(y|\theta_0)||p(y|\theta)\right)$ and let $\mu^{(n)}$ be a maximizer of $M^{(n)}( \theta,y) + \frac{1}{n} \ln p(\theta)$.
Then $$\mu^{(n)} \overset{\mathbb{P}}{\longrightarrow} \theta_0.$$
\end{proposition}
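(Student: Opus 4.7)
The plan is to avoid the uniform-convergence hypothesis of \autoref{prop1}, which may fail on all of $\mathbb{R}$ whenever $f$ is unbounded, by exploiting the explicit Gaussian form of both likelihood and prior. Write $g(\theta) := f(\theta) - f(\theta_0)$ and $\epsilon_n := \bar y - f(\theta_0)$, so that $\sqrt{n}\,\epsilon_n$ is $O_P(1)$ by the central limit theorem and hence $n \epsilon_n^2 = O_P(1)$. Expanding the Gaussian log-likelihood and completing the square in $f(\theta)$ yields
\begin{equation*}
M^{(n)}(\theta, y) \;=\; -\frac{n}{2\sigma^2}\bigl(g(\theta) - \epsilon_n\bigr)^2 + \frac{n \epsilon_n^2}{2\sigma^2}.
\end{equation*}

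Next I would use the defining property of the maximizer, i.e.\ that the MAP objective at $\mu^{(n)}$ dominates its value at $\theta_0$, together with the above display and the quadratic form of the Gaussian prior, to derive the single key inequality
\begin{equation*}
\frac{n}{2\sigma^2}\bigl(g(\mu^{(n)}) - \epsilon_n\bigr)^2 + \frac{(\mu^{(n)} - m_\theta)^2}{2 s_\theta^2} \;\leq\; \frac{n \epsilon_n^2}{2\sigma^2} + \frac{(\theta_0 - m_\theta)^2}{2 s_\theta^2}.
\end{equation*}
Two consequences fall out. Dropping the first summand on the left gives $(\mu^{(n)} - m_\theta)^2 \leq (\theta_0 - m_\theta)^2 + (s_\theta^2/\sigma^2)\, n\epsilon_n^2 = O_P(1)$, so the sequence $\mu^{(n)}$ is tight. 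Dropping the second summand and multiplying by $2\sigma^2/n$ gives $(g(\mu^{(n)}) - \epsilon_n)^2 = O_P(1/n)$, whence by the triangle inequality and $\epsilon_n = O_P(1/\sqrt n)$ one has $g(\mu^{(n)}) = O_P(1/\sqrt n)$ and therefore $f(\mu^{(n)}) \overset{\mathbb{P}}{\longrightarrow} f(\theta_0)$. The regularising role of the Gaussian prior is essential here: without it, $\mu^{(n)}$ could escape to infinity along a sequence on which $f$ returns values arbitrarily close to $f(\theta_0)$.

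Finally I would transfer convergence of $f(\mu^{(n)})$ to convergence of $\mu^{(n)}$ by a compactness argument. Given $\delta, \eta_0 > 0$, use tightness to pick $K$ with $\mathbb{P}(|\mu^{(n)} - m_\theta| > K) < \eta_0$ for $n$ large, enlarging if necessary so that $|\theta_0 - m_\theta| \leq K$. The set $A := \{\theta : |\theta - m_\theta| \leq K,\ |\theta - \theta_0| \geq \delta\}$ is compact and excludes $\theta_0$, so by continuity and identifiability $|f - f(\theta_0)|$ attains a strictly positive minimum $\eta$ on $A$. The inclusion
\begin{equation*}
\{|\mu^{(n)} - \theta_0| \geq \delta\} \;\subseteq\; \{|\mu^{(n)} - m_\theta| > K\} \cup \{|f(\mu^{(n)}) - f(\theta_0)| \geq \eta\}
\end{equation*}
combined with $f(\mu^{(n)}) \overset{\mathbb{P}}{\longrightarrow} f(\theta_0)$ yields $\limsup_n \mathbb{P}(|\mu^{(n)} - \theta_0| \geq \delta) \leq \eta_0$, and since $\eta_0$ is arbitrary, $\mu^{(n)} \overset{\mathbb{P}}{\longrightarrow} \theta_0$.

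The main obstacle is this final topological step rather than the algebra: continuity and identifiability alone do not force $f(\theta_n) \to f(\theta_0)$ to imply $\theta_n \to \theta_0$ (imagine a continuous, identifiable $f$ that oscillates toward the value $f(\theta_0)$ at infinity), so the argument genuinely needs the compact confinement supplied by the tightness bound. It is precisely the Gaussian prior that furnishes this tightness and thereby allows the strong uniform-convergence condition of \autoref{prop1} to be dispensed with.
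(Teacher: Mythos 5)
Your proof is correct, and it takes a genuinely different route from the paper's. The paper adapts Wald's consistency argument from \cite{van2000asymptotic}: it compactifies the parameter space to $\overline{\mathbb{R}}$, introduces local suprema $m_U(y)=\sup_{\theta\in U}m_\theta(y)$ dominated by an integrable function, shrinks the balls via monotone convergence, extracts a finite subcover of $\{d(\theta,\theta_0)\ge\varepsilon\}$, and compares the supremum of the empirical criterion over that set with its value at $\theta_0$. You instead exploit the explicit quadratic structure: completing the square gives $M^{(n)}(\theta,y)=-\tfrac{n}{2\sigma^2}(g(\theta)-\epsilon_n)^2+\tfrac{n}{2\sigma^2}\epsilon_n^2$ (this identity checks out), and the single inequality ``objective at $\mu^{(n)}$ $\ge$ objective at $\theta_0$'' then delivers both tightness of $\mu^{(n)}$ (from the Gaussian prior) and $f(\mu^{(n)})-f(\theta_0)=\mathcal{O}_{\mathbb{P}}(n^{-1/2})$, after which a compactness/separation argument on a bounded set finishes the proof. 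What each buys: the paper's argument is the general template and would survive non-Gaussian likelihoods with a dominated log-likelihood ratio, whereas yours is more elementary, yields a rate for $f(\mu^{(n)})-f(\theta_0)$, needs the likelihood comparison only at the single point $\theta_0$ rather than uniform control, and makes explicit where the prior does real work --- indeed it sidesteps the paper's implicit (and not fully justified) step that continuity plus identifiability force $\lim_{\theta\to\pm\infty}f(\theta)\neq f(\theta_0)$, which can fail for non-monotone $f$ approaching $f(\theta_0)$ at infinity; your tightness step is exactly what rescues consistency there. One caveat: your key inequality uses the prior at full strength, i.e.\ it treats $\mu^{(n)}$ as the maximizer of $\sum_i\ln p(y_i|\theta)+\ln p(\theta)$; with the literal scaling $M^{(n)}(\theta,y)+\tfrac{1}{n}\ln p(\theta)$ and $M^{(n)}$ the \emph{unaveraged} sum as written in \autoref{prop2}, the prior term acquires an extra $1/n$ and the tightness bound degrades to $\mathcal{O}_{\mathbb{P}}(\sqrt{n})$. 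Since the paper's own proof works with $\tfrac{1}{n}\sum_i m_\theta(y_i)+\tfrac{1}{n}\ln p(\theta)$, i.e.\ the genuine MAP objective, your reading matches the intended statement and the discrepancy is a normalization typo in the proposition rather than a gap in your argument, but it is worth flagging that your tightness step genuinely requires the MAP normalization.
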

The proof of this proposition again is directly transferable from a proof of consistency of the MLE which can be found in \cite{van2000asymptotic}.
We further note that we could also relax the assumptions in the sense that the uniqueness of $f(\theta_0)$ could be dropped. Then one would have the convergence to the set of all points where the same value of $f$ is attained. 
As stated here, the results apply to models that are "slightly nonlinear", as had been required in \cite{friston2007variational}. This means for strictly monotone functions, consistency will hold. Of course this identifiability condition is an restriction, as more contrived examples
may not involve such well behaved functions. In such cases, however, working with the true posterior may also not be guaranteed to retrieve the correct parameter \citep{stuart2010inverse}.

\subsection{Linear Model}\label{sec:resultlinear}
In the case of two parameters, the objective of variational Laplace will in general differ from that of MAP. To illustrate this, we gave explicit update equations in the case of a linear transform with unknown variance parameter.
These equations describe an iterative scheme that will only depend on the initial values. It is thus natural to study the asymptotics of fixed points of these equations. Hence, if we define the mapping 
$$ F_n: (\mu_\theta,\sigma_\theta,\mu_\lambda,\sigma_\lambda) \mapsto (\mu_\theta^*,\sigma_\theta^*,\mu_\lambda^*,\sigma_\lambda^*) $$
by our update equations \eqref{updatelinear}, we want to consider points $ (\mu_\theta^{(n)},\sigma_\theta^{(n)},\mu_\lambda^{(n)},\sigma_\lambda^{(n)})$, such that  $$F_n(\mu_\theta^{(n)},\sigma_\theta^{(n)},\mu_\lambda^{(n)},\sigma_\lambda^{(n)}) =  (\mu_\theta^{(n)},\sigma_\theta^{(n)},\mu_\lambda^{(n)},\sigma_\lambda^{(n)}).$$ For such a sequence, we can then study the asymptotic properties. Our following result assures us that the point $(\theta_0,0,\lambda_0,0)$ is the probabilistic limit of this sequence.
\begin{proposition}\label{prop3}
Assume $ (\mu_\theta^{(n)},\sigma_\theta^{(n)},\mu_\lambda^{(n)},\sigma_\lambda^{(n)})$ is a fixed point of $F_n$. Then we have $$ (\mu_\theta^{(n)},\sigma_\theta^{(n)},\mu_\lambda^{(n)},\sigma_\lambda^{(n)}) \overset{\mathbb{P}} \longrightarrow (\theta_0,0,\lambda_0,0).$$
\end{proposition}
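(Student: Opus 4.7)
The plan is to analyse the four fixed-point equations in \eqref{updatelinear} in sequence, after first reducing the Lambert-$W$ update to a tractable algebraic identity. Applying the defining property $W(z)e^{W(z)} = z$ with the substitution $v = \mu_\lambda^{(n)} + ns_\lambda^2/2 - m_\lambda$ in the third line of \eqref{updatelinear}, the $\mu_\lambda$ fixed-point equation simplifies to
\[
\mu_\lambda^{(n)} - m_\lambda \;=\; \frac{ns_\lambda^2}{2}\bigl(V^{(n)} e^{-\mu_\lambda^{(n)}} - 1\bigr), \qquad V^{(n)} := \frac{1}{n}\sum_{i=1}^{n}(y_i - a\mu_\theta^{(n)})^2 + a^2\sigma_\theta^{(n)}.
\]
Dividing by $n$ shows that, as soon as $\mu_\lambda^{(n)}$ does not grow faster than $n$, the fixed point forces $V^{(n)} e^{-\mu_\lambda^{(n)}} \approx 1$, i.e. $\mu_\lambda^{(n)} \approx \ln V^{(n)}$. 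This is the key algebraic reformulation on which the rest of the argument rests.

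Next I would anchor the stochastic side by the law of large numbers, $\bar y^{(n)} \overset{\mathbb{P}}{\to} a\theta_0$ and $\tfrac{1}{n}\sum y_i^2 \overset{\mathbb{P}}{\to} a^2\theta_0^2 + e^{\lambda_0}$, so that for any $\mu_\theta^{(n)} = O_p(1)$ the quantity $V^{(n)}$ is bounded above and away from zero in probability. Combining this with the monotonicity of $x \mapsto x + \tfrac{ns_\lambda^2}{2}(1 - V^{(n)}e^{-x})$ gives a unique fixed point in $\mu_\lambda$ that must lie in a tight neighbourhood of $\ln V^{(n)}$, yielding the a priori bound $\mu_\lambda^{(n)} = O_p(1)$. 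This a priori boundedness is, in my view, the main technical obstacle, because it has to be established simultaneously with boundedness of $\mu_\theta^{(n)}$; I would handle the latter by observing that the fixed-point expression for $\mu_\theta$ is a convex combination of $\bar y^{(n)}/a$ and the prior mean $m_\theta$, and hence automatically controlled by $\bar y^{(n)}$.

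With $\mu_\lambda^{(n)} = O_p(1)$ in hand, the variances collapse at rate $1/n$: directly,
\[
\sigma_\theta^{(n)} \;=\; \frac{e^{\mu_\lambda^{(n)}}s_\theta^2}{a^2ns_\theta^2 + e^{\mu_\lambda^{(n)}}} \;=\; O_p(1/n),
\]
and similarly $\sigma_\lambda^{(n)} = O_p(1/n)$ once one notes that the residual sum $\sum (y_i - a\mu_\theta^{(n)})^2$ is bounded below by $n$ times the sample variance, which converges in probability to $e^{\lambda_0} > 0$. Substituting these rates into the first line of \eqref{updatelinear} reduces the $\mu_\theta$ fixed-point expression to $\bar y^{(n)}/a + o_p(1)$, so that $\mu_\theta^{(n)} \overset{\mathbb{P}}{\to} \theta_0$.

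Finally, with $\mu_\theta^{(n)} \overset{\mathbb{P}}{\to} \theta_0$ and $\sigma_\theta^{(n)} \overset{\mathbb{P}}{\to} 0$, expanding the square in $V^{(n)}$ and applying the law of large numbers to $\tfrac{1}{n}\sum(y_i - a\theta_0)^2 \overset{\mathbb{P}}{\to} e^{\lambda_0}$ (the cross-term vanishing by Slutsky) gives $V^{(n)} \overset{\mathbb{P}}{\to} e^{\lambda_0}$. Taking logarithms in the reduced identity from Step 1 yields
\[
\mu_\lambda^{(n)} \;=\; \ln V^{(n)} \;-\; \ln\!\Bigl(1 + \tfrac{2(\mu_\lambda^{(n)} - m_\lambda)}{ns_\lambda^2}\Bigr),
\]
whose second term is $O_p(1/n)$ by the a priori bound, so $\mu_\lambda^{(n)} \overset{\mathbb{P}}{\to} \lambda_0$. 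Together, these four steps yield convergence of the entire fixed-point tuple to $(\theta_0,0,\lambda_0,0)$.
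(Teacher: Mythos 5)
Your proposal is correct, and its overall architecture mirrors the paper's proof: a priori boundedness of $\mu_\theta^{(n)}$ and $\mu_\lambda^{(n)}$ first, then the collapse of the variances and the limits of the means via the law of large numbers and Slutsky. In particular, your convex-combination bound for $\mu_\theta^{(n)}$, which uses only the positivity of $\sigma_\lambda^{(n)}$ and $\exp(\mu_\lambda^{(n)})$, is exactly the paper's device for breaking the apparent circularity, and your rates $\sigma_\theta^{(n)} = O_{\mathbb{P}}(1/n)$, $\sigma_\lambda^{(n)} = O_{\mathbb{P}}(1/n)$ coincide with the paper's. Where you genuinely depart is in the treatment of $\mu_\lambda^{(n)}$: the paper keeps the explicit Lambert-$W$ representation and invokes the expansion $W(x) = \ln x - \ln\ln x + o(1)$, which forces it to verify that the argument of $W$ diverges in probability and to carry the $\ln\ln$ terms through a lengthy computation (twice: once under the provisional boundedness assumption, once when proving it). You instead use $W(z)e^{W(z)} = z$ to undo the Lambert-$W$ solution and return to the stationarity equation $\mu_\lambda^{(n)} - m_\lambda = \tfrac{n s_\lambda^2}{2}\bigl(V^{(n)} e^{-\mu_\lambda^{(n)}} - 1\bigr)$, then exploit strict monotonicity of $x \mapsto x + \tfrac{n s_\lambda^2}{2}\bigl(1 - V^{(n)} e^{-x}\bigr)$; this is more elementary, avoids the asymptotics of $W$ entirely, and is close in spirit to the implicit Z-estimator route the paper itself mentions as an alternative at the end of its proof. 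Two small points would make the sketch airtight: the monotonicity argument by itself places the fixed point between $m_\lambda$ and $\ln V^{(n)}$ (intermediate value theorem), which is what yields $\mu_\lambda^{(n)} = O_{\mathbb{P}}(1)$ --- the ``tight neighbourhood of $\ln V^{(n)}$'' is only justified afterwards by your displayed identity, whose correction term is $O_{\mathbb{P}}(1/n)$; and the upper bound on $V^{(n)}$ requires the observation that $\sigma_\theta^{(n)} \le s_\theta^2$ holds unconditionally from the fixed-point formula (the paper states this explicitly), since at that stage $\mu_\lambda^{(n)}$ is not yet controlled.
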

Hence, under the reasonable assumption that the variational Laplace scheme will have a fixed point in this model, this proposition in particular establishes the consistency of the variational estimates $(\mu_\theta^{(n)},\mu_\lambda^{(n)})$. We can further assess the quality of these  estimates in the light of their asymptotic distribution, as the next proposition shows.
\begin{proposition}\label{prop:asymptoticefficiency}
Under the assumption of the previous proposition, the variational mean parameters are asymptotically efficient. That is, we have
$$\sqrt{n} \left( \begin{pmatrix} \mu_\theta^{(n)} \\ \mu_\lambda^{(n)} \end{pmatrix} - \begin{pmatrix} \theta_0 \\ \lambda_0 \end{pmatrix} \right)  \overset{d}{\longrightarrow} G \sim \mathcal{N}\left(0,\mathcal{I}(\theta_0,\lambda_0)^{-1} \right).$$
\end{proposition}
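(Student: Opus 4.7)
The plan is to leverage the explicit update equations \eqref{updatelinear} together with the consistency established in Proposition~\ref{prop3}, and extract the asymptotic distribution directly from the fixed-point equations. A first preparatory step is to compute the Fisher information matrix for the per-observation density $p(y_i|\theta,\lambda) = \mathcal{N}(y_i;a\theta,\exp(\lambda))$. A direct calculation gives that the score has independent components along $\theta$ and $\lambda$ (since $\mathbb{E}[y_i - a\theta_0] = 0$), so the Fisher information is diagonal, with entries $a^2/\exp(\lambda_0)$ and $1/2$, whose inverse is $\operatorname{diag}(\exp(\lambda_0)/a^2,\,2)$. This is the target covariance.

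For the $\mu_\theta$ component, I would start from the fixed-point version of the first update in \eqref{updatelinear}, rewritten as
$$a(\mu_\theta^{(n)} - \theta_0) = (\bar y - a\theta_0) - \frac{\exp(\mu_\lambda^{(n)})\bigl(\mu_\theta^{(n)} - m_\theta\bigr)}{a\, n\,(1+\sigma_\lambda^{(n)}/2)\,s_\theta^2}.$$
Multiplying by $\sqrt{n}/a$, the prior-driven correction vanishes in probability because $\mu_\theta^{(n)} - m_\theta$ is bounded in probability by Proposition~\ref{prop3} and all other quantities in that term are $O_p(1/\sqrt n)$ or $O_p(1)$ with $\exp(\mu_\lambda^{(n)})\to\exp(\lambda_0)$. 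By the classical CLT, $\sqrt{n}(\bar y - a\theta_0) \overset{d}{\to} \mathcal{N}(0,\exp(\lambda_0))$, and Slutsky gives $\sqrt{n}(\mu_\theta^{(n)} - \theta_0) \overset{d}{\to} \mathcal{N}(0,\exp(\lambda_0)/a^2)$.

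For the $\mu_\lambda$ component I would avoid unpacking the Lambert-$W$ directly, and instead use the first-order condition that produced it, namely
$$\exp(\mu_\lambda^{(n)})\,\Bigl(1 + \tfrac{2(\mu_\lambda^{(n)}-m_\lambda)}{n s_\lambda^2}\Bigr) = V_n + a^2 \sigma_\theta^{(n)},\qquad V_n := \tfrac{1}{n}\sum_i(y_i - a\mu_\theta^{(n)})^2.$$
Since $\sigma_\theta^{(n)} = O(1/n)$ and the correction in parentheses is $1 + O_p(1/n)$, taking logs and rescaling yields $\sqrt n(\mu_\lambda^{(n)} - \lambda_0) = \sqrt n(\ln V_n - \lambda_0) + o_p(1)$. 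Next I would expand $V_n = \tfrac{1}{n}\sum_i (y_i - a\theta_0)^2 + 2a(\theta_0 - \mu_\theta^{(n)})\cdot \tfrac1n \sum_i(y_i - a\theta_0) + a^2(\mu_\theta^{(n)}-\theta_0)^2$, whose last two terms are $O_p(1/n)$ and therefore negligible after multiplying by $\sqrt n$. The delta method applied to $x\mapsto\ln x$ at $\exp(\lambda_0)$, together with $\mathrm{Var}((y_1-a\theta_0)^2) = 2\exp(\lambda_0)^2$, produces $\sqrt n(\mu_\lambda^{(n)}-\lambda_0)\overset{d}{\to}\mathcal N(0,2)$. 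Jointness and asymptotic independence follow from the bivariate CLT applied to $(y_i - a\theta_0,\, (y_i-a\theta_0)^2 - \exp(\lambda_0))$: for a Gaussian noise variable the cross moment $\mathbb E[(y_i-a\theta_0)^3]$ vanishes, so the two coordinates of the limiting normal are uncorrelated and hence independent, giving the joint limit $\mathcal N(0,\operatorname{diag}(\exp(\lambda_0)/a^2,2)) = \mathcal N(0,\mathcal I(\theta_0,\lambda_0)^{-1})$.

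The main obstacle I expect is bookkeeping the several layers of cross-dependence in the fixed-point system: $\mu_\lambda^{(n)}$ appears in the update for $\mu_\theta^{(n)}$ and vice versa, and $V_n$ itself depends on $\mu_\theta^{(n)}$. The argument has to rule out that any of these coupled remainders is of order $1/\sqrt n$ rather than $o(1/\sqrt n)$, which requires carefully combining the $O_p(1/\sqrt n)$ consistency rate of $\mu_\theta^{(n)}$ with the $O(1/n)$ rates for $\sigma_\theta^{(n)}, \sigma_\lambda^{(n)}$. Once these orders are nailed down, the remaining steps are routine CLT-plus-Slutsky manipulations.
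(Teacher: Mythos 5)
Your proposal is correct, but it takes a genuinely different route from the paper. The paper's proof does not manipulate the closed-form updates at all: it observes that the fixed points are zeros of the perturbed score equation $0=\Psi^{(n)}(\mu_\theta^{(n)},\mu_\lambda^{(n)})+R^{(n)}(\mu_\theta^{(n)},\mu_\lambda^{(n)})$, where $\Psi^{(n)}$ is the average score and $R^{(n)}$ collects the prior gradients and the derivatives of the Hessian-ratio (trace) terms, then shows term by term that $\sqrt{n}\,R^{(n)}(\mu_\theta^{(n)},\mu_\lambda^{(n)})=o_{\mathbb P}(1)$ and invokes Theorem 5.21 of van der Vaart (asymptotics of near-zeros of estimating equations, with the local Lipschitz condition supplied by smoothness of the log-likelihood) to conclude the variational means are asymptotically equivalent to the MLE, hence efficient; the Fisher information $\operatorname{diag}(a^2/\exp(\lambda_0),\tfrac12)$ is then computed exactly as you do. Your argument instead extracts the limit directly from the explicit fixed-point equations: the rearranged $\mu_\theta$-equation plus CLT and Slutsky for the first coordinate, and the pre-Lambert-$W$ first-order condition, a log expansion, the expansion of $V_n$, and the delta method for the second, with joint normality and asymptotic independence from the bivariate CLT for $(y_i-a\theta_0,\,(y_i-a\theta_0)^2-\exp(\lambda_0))$ and the vanishing third Gaussian moment. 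What your route buys is a completely elementary, self-contained proof that avoids the general Z-estimator machinery and makes the $O_{\mathbb P}(1/\sqrt n)$ and $O_{\mathbb P}(1/n)$ bookkeeping explicit (note that the $\sqrt n$-rate of $\mu_\theta^{(n)}$ you worry about is in fact delivered by your own first display, so no circularity arises); what the paper's route buys is a template that does not depend on the availability of closed-form updates and is exactly the mechanism later generalized in \autoref{varlaptheorem}, where only the negligibility of $\sqrt n\,R^{(n)}$ has to be checked. Both yield the same limit $\mathcal N\left(0,\operatorname{diag}\left(\exp(\lambda_0)/a^2,\,2\right)\right)=\mathcal N\left(0,\mathcal I(\theta_0,\lambda_0)^{-1}\right)$.
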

In this sense, estimates generated by variational Laplace attain optimality in this model. To compare variational Laplace with the results presented in \fullref{sec:review}, it is of interest to also study the asymptotics of the variational distribution.
First, we note that \autoref{prop3}  implies the convergence of the variational densities to point masses in the distributional sense. That is, we have that 
\begin{equation}q_{\mu_\theta^{(n)},\sigma_\theta^{(n)}}(\theta) q_{\mu_\lambda^{(n)},\sigma_\lambda^{(n)}}(\lambda) \overset{d}{\longrightarrow} \delta_{\theta_0}(\theta) \delta_{\lambda_0}(\lambda),\end{equation}
where this follows from the fact that for a normal distribution the convergence of mean and variance imply convergence in distribution by Levy's continuity theorem \citep{van2000asymptotic}. Thus, in this sense, the variational densities behave like the true posterior in the limit. 
We also want to provide some insight on the rescaled variational distribution, in a similar spirit as \eqref{variationalbernstein} resembling the Bernstein-von-Mises theorem. To this end, we notice that with our assumptions we have 
\begin{equation}
n \begin{pmatrix}\sigma_\theta^{(n)}  & 0 \\ 0 & \sigma_\lambda^{(n)} \end{pmatrix} \overset{\mathbb{P}}{\longrightarrow} \begin{pmatrix}\frac{\exp(\lambda_0)}{a^2}  & 0 \\ 0 & 2 \end{pmatrix} =  \mathcal{I}(\theta_0,\lambda_0)^{-1}.      \\
\end{equation}
Combining this with \autoref{prop:asymptoticefficiency},  in analogy to the limiting rescaled posterior distributions in the Bernstein-von-Mises Theorem \eqref{bernstein_von_mises} and its variational inference counterpart, one can suggestively state the asymptotic form of the rescaled variational densities as
\begin{equation}
\mathcal{N}\left( G, \mathcal{I}\left(\theta_0,\lambda_0\right)^{-1}\right) ; G \sim \mathcal{N}\left(0,\mathcal{I}(\theta_0,\lambda_0)^{-1}\right).
\end{equation}
In particular it turns out that asymptotically, the rescaled versions of the variational distribution obtained by variational Laplace and the optimal distribution in terms of \eqref{optimalGaussian} are equivalent in total variation.
\begin{proposition}\label{variationconvergence}
Under the previous assumption, let  $$\widetilde{\mu}^{(n)} :=\begin{pmatrix} \sqrt{n} \left(\mu_\theta^{(n)}-\theta_0\right) \\ \sqrt{n}\left(\mu_\lambda^{(n)}-\lambda_0\right) \end{pmatrix},$$
$$\widetilde{\Sigma}^{(n)} := n \begin{pmatrix}\sigma_\theta^{(n)}  & 0 \\ 0 & \sigma_\lambda^{(n)} \end{pmatrix} ,$$
and  $\widetilde{q}^{*(n)}_h(h)$ the optimal variational density in terms of \eqref{optimalGaussian}, after the reparameterization $h = \sqrt{n}(\theta-\theta_0)$.
Then we have
$$
\left\Vert \widetilde{q}_{\widetilde{\mu}^{(n)},\widetilde{\Sigma}^{(n)}}(h) - \widetilde{q}^{*(n)}_h(h) \right \Vert_{TV} \overset{\mathbb{P}}{\longrightarrow} 0.
$$
\end{proposition}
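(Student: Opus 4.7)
The plan is to identify both densities as Gaussians with explicit parameters and then control their total variation distance by the closeness of these parameters. First I would identify $\widetilde{q}^{*(n)}_h$ explicitly: in this linear model, the Fisher information matrix is diagonal,
\[
\mathcal{I}(\theta_0,\lambda_0) = \operatorname{diag}\!\bigl(a^2 e^{-\lambda_0},\, 1/2\bigr),
\]
so the Gaussian $\mathcal{N}((\hat{\theta}^{(n)},\hat{\lambda}^{(n)})^T, \tfrac{1}{n}\mathcal{I}(\theta_0,\lambda_0)^{-1})$ already factorizes over $\theta$ and $\lambda$ and hence lies in the mean-field variational family. Consequently $g^{(n)}$ from \eqref{optimalGaussian} coincides with this Gaussian, and after reparameterization,
\[
\widetilde{q}^{*(n)}_h(h) = \mathcal{N}\!\Bigl(h;\, \sqrt{n}\bigl(\hat{\theta}^{(n)}-\theta_0,\,\hat{\lambda}^{(n)}-\lambda_0\bigr)^T,\, \mathcal{I}(\theta_0,\lambda_0)^{-1}\Bigr).
\]

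With both densities Gaussian, Pinsker's inequality combined with the closed-form KL divergence between Gaussians reduces the proposition to two convergences: (i) $\widetilde{\Sigma}^{(n)} \overset{\mathbb{P}}{\longrightarrow} \mathcal{I}(\theta_0,\lambda_0)^{-1}$, which is already recorded just above the statement of the proposition, and (ii) $\widetilde{\mu}^{(n)} - \sqrt{n}(\hat{\theta}^{(n)}-\theta_0,\, \hat{\lambda}^{(n)}-\lambda_0)^T \overset{\mathbb{P}}{\longrightarrow} 0$. Since the MLE has closed form $\hat{\theta}^{(n)} = \overline{y}/a$ and $\hat{\lambda}^{(n)} = \ln(\tfrac{1}{n}\sum_i (y_i - a\hat{\theta}^{(n)})^2)$, (ii) compares the variational Laplace fixed point with the MLE on the $1/\sqrt{n}$ scale.

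The main obstacle is verifying (ii). For the $\mu_\theta$ coordinate, rearranging the fixed-point equation from \eqref{updatelinear} yields
\[
\mu_\theta^{(n)} - \hat{\theta}^{(n)} = \frac{A_n}{1+A_n}\,\bigl(m_\theta - \hat{\theta}^{(n)}\bigr),\qquad A_n := \frac{e^{\mu_\lambda^{(n)}}}{a^2(1+\sigma_\lambda^{(n)}/2)\,n s_\theta^2},
\]
and since $A_n$ is of order $1/n$ in probability while $m_\theta - \hat{\theta}^{(n)}$ is bounded in probability, the rescaled difference is $O_{\mathbb{P}}(1/\sqrt{n})$ and vanishes. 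For the $\mu_\lambda$ coordinate I would use the defining property $W(z)e^{W(z)} = z$ of the Lambert $W$ function to recast the fixed-point equation as
\[
e^{\mu_\lambda^{(n)}}\Bigl(1+\frac{\mu_\lambda^{(n)}-m_\lambda}{n s_\lambda^2/2}\Bigr) = \frac{1}{n}\sum_{i=1}^{n}(y_i - a\mu_\theta^{(n)})^2 + a^2\sigma_\theta^{(n)}.
\]
Expanding the sum around $\hat{\theta}^{(n)}$ and exploiting the MLE optimality $\tfrac{1}{n}\sum_i (y_i - a\hat{\theta}^{(n)}) = 0$, together with $\mu_\theta^{(n)}-\hat{\theta}^{(n)}$ and $\sigma_\theta^{(n)}$ both being of order $1/n$ in probability, both sides agree with $e^{\hat{\lambda}^{(n)}}$ up to $O_{\mathbb{P}}(1/n)$ corrections. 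Taking logarithms gives $\mu_\lambda^{(n)} - \hat{\lambda}^{(n)} = O_{\mathbb{P}}(1/n)$, which vanishes after multiplication by $\sqrt{n}$. Combining (i) and (ii), the KL divergence between the two Gaussians tends to zero in probability, and Pinsker's inequality closes the argument.
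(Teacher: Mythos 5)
Your proposal is correct and reaches the same endpoint as the paper, but it differs in two places. First, on the mean comparison: the paper does not compute $\mu^{(n)}-\hat{\theta}^{(n)}$ explicitly. Instead it packages the comparison into \autoref{totalvariationconvergencelemma}, whose key input is the general fact (differentiability in quadratic mean plus Lemma 8.14 of van der Vaart) that any two asymptotically efficient estimator sequences satisfy $\sqrt{n}(\mu^{(n)}-\theta_0)-\sqrt{n}(\hat{\theta}^{(n)}-\theta_0)\overset{\mathbb{P}}{\to}0$; the efficiency of the variational means established in \autoref{prop:asymptoticefficiency} then does all the work. Your route instead extracts the $O_{\mathbb{P}}(1/n)$ bounds directly from the fixed-point equations \eqref{updatelinear} — the algebra for $\mu_\theta^{(n)}$ and the Lambert-$W$ identity for $\mu_\lambda^{(n)}$ both check out, and this is more elementary and even gives a rate, at the cost of being tied to the explicit form of this model. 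The Pinsker-plus-Gaussian-KL closing step is identical to the paper's lemma. Second, a caveat on what $\widetilde{q}^{*(n)}_h$ denotes: you identify it outright with the reparameterized $g^{(n)}$ of \eqref{optimalGaussian}, which the statement's wording permits, but the paper's proof treats $q^{*(n)}$ as the exact KL-minimizer with respect to the true posterior as in \eqref{KL_opt} and therefore inserts one more triangle-inequality step, controlling $\Vert \widetilde{q}^{*(n)}_h-\widetilde{g}_h^{(n)}\Vert_{TV}$ via the variational Bernstein--von Mises result \eqref{variationalbernstein}/\eqref{rescaledg}. Under that reading your argument is missing exactly that one (citable, one-line) step; everything else stands.
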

Thus we see that for this model in the limit, in the sense of total variation, variational Laplace behaves as if the optimal density was correctly identified. Additionally, since the Fisher information matrix in this model is diagonal, both also eventually coincide with the limiting distribution for the posterior in the sense of \eqref{bernstein_von_mises}.

\subsection{General Asymptotic Properties of Variational Laplace}\label{sec:resultsgeneral}
We have seen that in our examples, variational Laplace enjoyed desirable asymptotic properties. Below, we provide two results that may affirm these properties of variational Laplace in a more general setting. 
In this section we will always assume an identifiable model $p(y|\theta)$, such that $ \theta \neq \theta_0 \implies p(y|\theta) \neq p(y|\theta_0)$. 
Furthermore, as before, we assume that we are working with a series of fixed points of the function $F_n$ conveying the iterative updates of the variational parameters.
That is, we have a sequence of variational parameters $(\mu_1^{(n)},\Sigma_1^{(n)},...,\mu_p^{(n)},\Sigma_p^{(n)})$ fulfilling  $$F_n(\mu_1^{(n)},\Sigma_1^{(n)},...,\mu_p^{(n)},\Sigma_p^{(n)}) = (\mu_1^{(n)},\Sigma_1^{(n)},...,\mu_p^{(n)},\Sigma_p^{(n)}).$$

Our first result will give sufficient conditions such that the true parameter is the only sensible limit of the variational Laplace scheme, in the sense that the sequence of variational means may not converge in probability to a parameter other than the true parameter.
To establish this result, we assume that the sequence of variational means converges in probability to some set of points
$$(\mu_1^{(n)},...,\mu_p^{(n)}) \overset{\mathbb{P}}{\longrightarrow} (\theta_1',...,\theta_p'),$$
where it is not determined that $\theta'= (\theta_1',...,\theta_p')$ corresponds to the true parameters $\theta_0$.
We furthermore define a modified version of the variational energy functions at the fixed points as
\begin{equation} I_j^{(n)}(\mu_j) = \sum_{i=1}^{n}\frac{\ln p(y_i|,\mu_j,\mu_{/j}^{(n)})}{\ln p(y|\theta_0)}  + \ln p(\mu_j,\mu_{/j}^{(n)})  + \frac{1}{2} \sum_{k \neq j}\operatorname{tr}\left[ \Sigma_k^{(n)} H_{L_y}^{\theta_k}(\mu_{j},\mu_{/j}^{(n)})\right]\end{equation}
where $\mu_j^{(n)}$ still maximizes $I_j^{(n)}$, as we just subtracted a constant.
\begin{proposition}\label{prop4}
Assume the Kullback-Leibler-divergence $\operatorname{D_{KL}}\left( p(y|\theta_0) || p(y|\theta) \right)$ is convex and differentiable in the parameter $\theta$. Furthermore, assume that the variational energies fulfill $$\frac{I_j^{(n)}(\mu_j)}{n} \overset{\mathbb{P}}{\longrightarrow} -\operatorname{D_{KL}}(p(y|\theta_0) || p(y| \mu_j, \theta'_{/j})$$
and in turn $$ \mu_j^{(n)} \overset{\mathbb{P}}{\longrightarrow} \underset{\mu_j}{\arg \max}  -\operatorname{D_{KL}}(p(y|\theta_0) || p(y| \mu_j, \theta'_{/j}).$$
Then $$ \theta' = \theta_0.$$
\end{proposition}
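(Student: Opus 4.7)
The plan is to combine the uniqueness of probability limits with the convexity of the Kullback-Leibler divergence to argue that the components of $\theta'$ jointly satisfy the first-order optimality conditions at a single global minimizer of the KL divergence, which by identifiability can only be $\theta_0$.

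First, I would exploit the assumption that $\mu_j^{(n)} \overset{\mathbb{P}}{\longrightarrow} \theta_j'$ together with the stipulated convergence $\mu_j^{(n)} \overset{\mathbb{P}}{\longrightarrow} \arg\max_{\mu_j}\,\bigl( -\operatorname{D_{KL}}(p(y|\theta_0)\,\|\,p(y|\mu_j,\theta'_{/j}))\bigr)$. By uniqueness of the probability limit, $\theta_j'$ must equal this argmax for every block $j$. This means that for each $j$, the coordinate-block $\theta_j'$ minimizes $\mu_j \mapsto \operatorname{D_{KL}}(p(y|\theta_0)\,\|\,p(y|\mu_j,\theta'_{/j}))$ while the remaining coordinates are held at $\theta'_{/j}$.

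Next, using differentiability, this coordinate-wise optimality translates into vanishing of the partial gradient: for each $j$,
\begin{equation}
\frac{\partial}{\partial \theta_j}\,\operatorname{D_{KL}}\bigl(p(y|\theta_0)\,\|\,p(y|\theta)\bigr)\bigg|_{\theta = \theta'} = 0.
\end{equation}
Assembling these block-wise conditions gives $\nabla_\theta \operatorname{D_{KL}}(p(y|\theta_0)\,\|\,p(y|\theta))\big|_{\theta = \theta'} = 0$. By the assumed convexity of $\theta \mapsto \operatorname{D_{KL}}(p(y|\theta_0)\,\|\,p(y|\theta))$, any stationary point is automatically a global minimizer.

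Finally, I would invoke the standing identifiability assumption of the model together with the fact that the Kullback-Leibler divergence vanishes at $\theta_0$ and is strictly positive at any $\theta$ with $p(y|\theta) \neq p(y|\theta_0)$. This pins down $\theta_0$ as the unique global minimizer, forcing $\theta' = \theta_0$. The main obstacle is the step from coordinate-wise optimality to vanishing of the full gradient: one must ensure that the partial derivative of the KL divergence with respect to $\theta_j$ at $\theta'$ coincides with the derivative of the one-block function $\mu_j \mapsto \operatorname{D_{KL}}(p(y|\theta_0)\,\|\,p(y|\mu_j,\theta'_{/j}))$ at $\mu_j = \theta_j'$, which is immediate from differentiability but requires carefully using that the remaining coordinates are genuinely fixed at their limit values rather than some stochastic surrogate. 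Beyond that, everything reduces to convex analysis and identifiability.
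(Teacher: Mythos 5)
Your proposal is correct and follows essentially the same route as the paper's own (very terse) proof: identify $\theta'_j$ with the block-wise maximizer via uniqueness of probability limits, use differentiability and convexity to upgrade coordinate-wise stationarity to a global optimum of $\theta \mapsto -\operatorname{D_{KL}}(p(y|\theta_0)\,\|\,p(y|\theta))$, and conclude $\theta' = \theta_0$ by identifiability. The only difference is that you spell out the intermediate step (vanishing block gradients assembling into a zero full gradient) that the paper compresses into a citation to convex analysis.
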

The proof of this proposition is rather short: It suffices to note that under these conditions we get that for all $j$, $$ \theta'_j = \underset{\theta_j}{\arg \max}   -\operatorname{D_{KL}}(p(y|\theta_0) || p(y| \theta_j, \theta'_{/j}),$$
which, by the concavity of $-\operatorname{D_{KL}}(p(y|\theta_0) || p(y| \theta_j, \theta'_{/j})$ makes it a local and hence global maximum \citep{rockafellar1970convex}, this in turn is unique due to the identifiability of the model and therefore has to be $\theta_0$, which concludes the proof. 

\noindent The intuition behind the conditions that we pose on the variational energies is motivated by the equivalence of MLE and MAP estimates that we discussed earlier. Here, one assumes that the additional terms involving the second derivatives of the model do not matter in the limit. Also, one has a slightly stronger notion of the convergence of the maximizers, as the likelihood is not evaluated at a deterministic point but depends on the sequence of the remaining estimators as well.
Whether the given assumptions will hold in typical models is not immediate and will require further analysis. In \fullref{sec:expfam} we discuss their validity in naturally parameterized exponential families and find that our conditions are indeed satisfied under mild assumptions on the prior.

The previous proposition required the convergence in probability of the variational means to some point as an assumption. Hence, the result does not exactly establish consistency, as such a convergence may not be given in the first place. In what follows, we want to provide a theorem that indeed yields consistency of variational Laplace. To establish this theorem, we have to make stronger assumptions on the underlying parameter space and the joint probability density, but can in turn drop the assumption of a given convergence of the parameters.

The main idea of the theorem is to characterize the fixed points of variational Laplace as zeros of the gradient of the variational energy functions, where this gradient deviates from the gradient of the likelihood only by a hopefully negligible remainder term. We will therefore assume that the log joint probability of our model is three times differentiable with respect to $\theta$ and in turn define a vector-valued remainder-term as 
\begin{equation}
\begin{aligned}
R^{(n)}(y,\mu_j,\mu_{/j})_j  = \frac{1}{n}\left( -\frac{\partial}{\partial \theta_j} \ln p(\theta_j,\theta_{/j}) \bigg \rvert_{ \left(\mu_j,\mu_{/j}\right)} +  \frac{1}{2} \sum_{k \neq j} \frac{\partial}{\partial \theta_j} \operatorname{tr}\left[ H_{L_y}^{\theta_k}\left(\mu_j,\mu_{/j}\right)^{-1}   H_{L_y}^{\theta_k}  \left(\theta_j,\theta_{/j}\right)    \right] \bigg \rvert_{\mu_j,\mu_{/j}} \right).
\end{aligned}
\end{equation}
Now the consistency and asymptotic normality of the model can be given under some global conditions on the model and the behaviour of this remainder term.
\begin{theorem}\label{varlaptheorem}
Assume the following conditions hold
\begin{enumerate}
\item $\mu^{(n)}$ is in the interior of $\Theta$
\item $ R^{(n)}(y,\mu^{(n)}) \overset{\mathbb{P}}{\longrightarrow} 0$
\item $\underset{\theta \in \Theta}{\sup}  \left\Vert \frac{1}{n} \frac{\partial}{\partial \theta} \ln  p(y|\theta) +  \frac{\partial}{\partial \theta}  \operatorname{D_{KL}}(p(y|\theta_0)|| p(y|\theta))\right\Vert \overset{\mathbb{P}}{\longrightarrow} 0$
\item $\underset{\theta : d(\theta,\theta_0) \geq \varepsilon}{\inf} \left\Vert  -\frac{\partial}{\partial \theta}  \operatorname{D_{KL}}(p(y|\theta_0)|| p(y|\theta))\right\Vert >0$ for all $\varepsilon >0$
\item $\frac{\partial}{\partial \theta} \int \ln p(y|\theta) p(y|\theta_0) dy = \int  \frac{\partial}{\partial \theta}\ln p(y|\theta) p(y|\theta_0) dy$
\end{enumerate}
then we have consistency of the variational mean,
$$ \mu^{(n)} \overset{\mathbb{P}}{\longrightarrow} \theta_0.$$
If in addition we have 
$$ \sqrt{n}  R^{(n)}(y,\mu^{(n)})\overset{\mathbb{P}}{\longrightarrow} 0,$$
the Fisher information matrix $\mathcal{I}(\theta_0)$ exists, is invertible and the exchange of derivative and integration as in \eqref{fisherconditions} is valid,
 then we have asymptotic efficiency
$$ \sqrt{n}(\mu^{(n)}-\theta_0) \overset{d}{\longrightarrow} G \sim \mathcal{N}(0,\mathcal{I}(\theta_0)^{-1}).$$
\end{theorem}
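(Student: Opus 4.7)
Since $\mu^{(n)}$ is an interior fixed point of $F_n$, each block $\mu_j^{(n)}$ is an interior maximizer of the modified variational energy $I_j^{(n)}$, so the first-order conditions $\partial I_j^{(n)}/\partial \mu_j = 0$ hold. Carrying out the differentiation of $I_j^{(n)}$ explicitly (using $\Sigma_k^{(n)} = -H_{L_y}^{\theta_k}(\mu^{(n)})^{-1}$) and stacking across $j$, these conditions rearrange into the single vector identity
\begin{equation}
\frac{1}{n}\frac{\partial}{\partial \theta} \ln p(y|\theta)\bigg\rvert_{\theta=\mu^{(n)}} = R^{(n)}(y,\mu^{(n)}),
\end{equation}
which is precisely the algebraic role of the remainder $R^{(n)}$. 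The plan is to treat this as a perturbed likelihood equation and run the standard Z-estimator argument: hypotheses 2--4 drive the perturbation to zero at the $o_{\mathbb{P}}(1)$ rate needed for consistency, and the strengthened hypothesis $\sqrt{n}R^{(n)} \overset{\mathbb{P}}{\to} 0$ upgrades this to the $o_{\mathbb{P}}(n^{-1/2})$ rate needed for asymptotic normality.

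\textbf{Consistency.} Adding $\frac{\partial}{\partial \theta}\operatorname{D_{KL}}(p(y|\theta_0)||p(y|\theta))|_{\mu^{(n)}}$ to both sides and rearranging,
\begin{equation}
\frac{\partial}{\partial \theta}\operatorname{D_{KL}}(p(y|\theta_0)||p(y|\theta))\bigg\rvert_{\mu^{(n)}} = \left(\frac{1}{n}\frac{\partial}{\partial \theta} \ln p(y|\theta) + \frac{\partial}{\partial \theta}\operatorname{D_{KL}}\right)\bigg\rvert_{\mu^{(n)}} - R^{(n)}(y,\mu^{(n)}).
\end{equation}
The right-hand side is $o_{\mathbb{P}}(1)$: the first bracket by the uniform bound in hypothesis~3, the remainder by hypothesis~2. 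Hence the KL-gradient at $\mu^{(n)}$ tends to zero in probability. Hypothesis~4 is the well-separated-zero condition on this gradient and forces $\mu^{(n)} \overset{\mathbb{P}}{\to} \theta_0$ by the usual argument: for any $\varepsilon>0$, on the event $\{d(\mu^{(n)},\theta_0) \geq \varepsilon\}$ the KL-gradient norm is bounded below by a positive constant, so this event must have vanishing probability.

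\textbf{Asymptotic normality.} By consistency, $\mu^{(n)}$ eventually lies in a shrinking neighbourhood of $\theta_0$. Taylor expand the score around $\theta_0$,
\begin{equation}
\frac{1}{n}\frac{\partial}{\partial \theta} \ln p(y|\mu^{(n)}) = \frac{1}{n}\frac{\partial}{\partial \theta} \ln p(y|\theta_0) + \left[\frac{1}{n}\frac{\partial^2}{\partial \theta \partial \theta^T} \ln p(y|\tilde{\theta}^{(n)})\right] (\mu^{(n)} - \theta_0),
\end{equation}
for some $\tilde{\theta}^{(n)}$ on the segment between $\theta_0$ and $\mu^{(n)}$. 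Substituting into the fixed-point identity and multiplying by $\sqrt{n}$ gives
\begin{equation}
-\left[\frac{1}{n}\frac{\partial^2}{\partial \theta \partial \theta^T} \ln p(y|\tilde{\theta}^{(n)})\right] \sqrt{n}(\mu^{(n)} - \theta_0) = \frac{1}{\sqrt{n}}\frac{\partial}{\partial \theta} \ln p(y|\theta_0) - \sqrt{n}\, R^{(n)}(y,\mu^{(n)}).
\end{equation}
The scores are i.i.d.\ and mean-zero (by the exchange-of-differentiation assumption in \eqref{fisherconditions}), so the CLT yields $\frac{1}{\sqrt{n}}\partial_\theta \ln p(y|\theta_0) \overset{d}{\to} \mathcal{N}(0,\mathcal{I}(\theta_0))$; the observed-information bracket converges in probability to $\mathcal{I}(\theta_0)$ by consistency of $\tilde{\theta}^{(n)}$, a local uniform LLN for $\partial^2 \ln p$, and the information identity; and the remainder vanishes by assumption. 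Slutsky's theorem then delivers $\sqrt{n}(\mu^{(n)} - \theta_0) \overset{d}{\to} \mathcal{I}(\theta_0)^{-1}\mathcal{N}(0,\mathcal{I}(\theta_0)) = \mathcal{N}(0,\mathcal{I}(\theta_0)^{-1})$.

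\textbf{Main obstacle.} The trickiest ingredient is the local uniform LLN for the observed information $\frac{1}{n}\partial^2 \ln p(y|\theta)$ needed to control the Hessian at the intermediate point $\tilde{\theta}^{(n)}$; this is not an explicit hypothesis and must be extracted from the regular-parametric-model regularity (e.g.\ via a local Lipschitz or bracketing argument on the Hessian of $\ln p$). A secondary subtlety is that the compact fixed-point equation $\frac{1}{n}\partial_\theta \ln p(y|\mu^{(n)}) = R^{(n)}$ relies on interpreting $\Sigma_k^{(n)}$ as the Laplace covariance $-H_{L_y}^{\theta_k}(\mu^{(n)})^{-1}$, matching the variational Laplace scheme of Section~3 but implicit rather than stated. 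Both gaps are modest and the standing regularity (invertible Fisher information, sufficiently smooth log-joint) typically suffices to close them.
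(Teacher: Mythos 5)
Your proposal is correct and, for the consistency half, follows the same route as the paper: you identify the fixed point as a near-zero of the normalized score, with the deviation given exactly by $R^{(n)}$, and then combine hypotheses 2--4 in a Z-estimator argument (the paper packages this as an appeal to its auxiliary Theorem \ref{Z-estimator}, with hypothesis 5 used to identify the limit of $\frac{1}{n}\partial_\theta \ln p(y|\theta)$ with $-\partial_\theta \operatorname{D_{KL}}(p(y|\theta_0)\|p(y|\theta))$, whereas you carry out the same estimate by hand). For the efficiency half the routes diverge in tooling: the paper does not Taylor-expand the score but instead cites Theorem 5.21 of van der Vaart for near-zeros of estimating equations, claiming a local Lipschitz condition on the score from the assumed smoothness of the log joint, and then obtains the sandwich covariance $V_{\theta_0}^{-1}\mathcal{I}(\theta_0)V_{\theta_0}^{-1}$, which collapses to $\mathcal{I}(\theta_0)^{-1}$ via the exchange-of-differentiation condition \eqref{fisherconditions}; your classical argument with an intermediate point $\tilde{\theta}^{(n)}$ instead requires a local uniform law of large numbers for the observed information, which you correctly flag as not being among the stated hypotheses. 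Neither closing step is fully self-contained from the theorem's explicit assumptions alone — the paper leans on "sufficient smoothness" for the Lipschitz bound just as you lean on it for the Hessian control — so your version is an acceptable, slightly more elementary substitute, while the paper's citation of the van der Vaart result avoids any explicit second-order expansion and hence any hypothesis on third derivatives of the log-likelihood beyond what the remainder term already encodes. One small point of care: your compact identity $\frac{1}{n}\partial_\theta \ln p(y|\theta)\rvert_{\mu^{(n)}} = R^{(n)}(y,\mu^{(n)})$ presupposes, as you note, that $\Sigma_k^{(n)} = -H_{L_y}^{\theta_k}(\mu^{(n)})^{-1}$ at the fixed point; this is indeed how the paper sets up the fixed-point equations, so no gap arises, but it is worth stating explicitly since the sign conventions in the trace terms are easy to scramble.
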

The proof of this theorem may be sketched as follows: The assumptions on the gradient of the likelihood and the Kullback-Leibler divergence establish the consistency of a MLE being a zero of this gradient by \autoref{Z-estimator}. 
As the remainder term is by assumption asymptotically negligible, this then also gives the consistency of the estimates generated by variational Laplace, as the aforementioned derivative at these estimates will deviate from zero exactly by the
remainder term. The additional assumption on the convergence rate of the remainder term is then enough to also give the asymptotic normality of the estimates with the limit distribution being the same as obtained by the MLE. 
Again, as in \autoref{prop1}, our assumptions may be too restrictive in practice, for example they will typically require compactness of the parameter space $\Theta$. Still, this theorem has some value in pointing out
that one main point in establishing the consistency of variational Laplace will be to control the remainder term, which may also be useful for more general proofs. In \fullref{sec:expfam} we again discuss the validity in natural parameter exponential families, where they are typically satisfied, provided the parameter space
is compact.

We want to conclude this section by some general remarks on the 
covariance parameters in variational Laplace. Recall that we have
\begin{equation}
(\Sigma_k^{(n)})^{-1} = -H_{L_y}^{\theta_k}(\mu^{(n)}) = -\sum \limits_{i=1}^{n}  \frac{\partial^2 \ln p(y_i|\theta)}{\partial \theta_k \partial \theta_k^T}(\mu^{(n)}) -   \frac{\partial^2 \ln p(\theta)}{\partial \theta_k \partial \theta_k^T}(\mu^{(n)})
\end{equation}
by definition. Thus, we get that 
\begin{equation}
(n\Sigma_k^{(n)})^{-1} = -\frac{1}{n}\sum\limits_{i=1}^{n}  \frac{\partial^2 \ln p(y_i|\theta)}{\partial \theta_k \partial \theta_k^T}(\mu^{(n)}) -  \frac{1}{n} \frac{\partial^2 \ln p(\theta)}{\partial \theta_k \partial \theta_k^T}(\mu^{(n)})
\end{equation}
Assuming the consistency of $\mu^{(n)}$, that is $\mu^{(n)} \overset{\mathbb{P}}{\longrightarrow} \theta_0$ the second term involving the prior will be negligible. The first term is what would be called the observed Fisher information matrix, if $\mu^{(n)}$ were the MLE.
If suitable conditions are given such that $\mu^{(n)}$ may be replaced by $\theta_0$ when passing to the limit, one would have
\begin{equation}\label{asymptoticvariance}
n\Sigma_k^{(n)}\overset{\mathbb{P}}{\longrightarrow} -\mathbb{E}\left[\frac{\partial^2 \ln p(y|\theta)}{\partial \theta_k \partial \theta_k^T}(\theta_0)\right]^{-1} 
\end{equation}
The expression on the right is the inverse of a quadratic subpart of the models Fisher information matrix. Hence, in this case one would have the same asymptotic underdispersion that we have seen previously in \eqref{underdisp}. As an example, in the case that all parameters are 
one-dimensional, one can see that this gives a matrix having the inverse diagonal elements of the models Fisher information matrix, such that in this case, our derivations in \autoref{sec:entropy} equally apply.
These considerations thus may suggest that at least in the case that the model is sufficiently well-behaved, general variational inference and variational Laplace show similar asymptotic behaviour in terms of the distribution.

\section{Simulations}\label{sec:simulations}

In the previous sections we have seen, that variational Laplace should have desirable theoretical properties in our example models. Here, we want to back up these notions by simulations. All simulation experiments were conducted using Python, version 3.7.6 ~\citep{pythonmanual}.
\subsection{Single Parameter Model}
\noindent\begin{minipage}[htb]{0.46\textwidth}
\noindent
As we have derived in \autoref{prop2}, if the transform function $f$ is well-behaved in the sense that different values of $\theta$ will yield different function values, one can expect that the parameter-estimates obtained by variational Laplace (which in this special case correspond to the 
maximum-a-posteriori estimates) will be asymptotically consistent. To illustrate this behaviour, we repeatedly drew $500$ samples with growing size $n$ from three models that fulfill these requirements (${f(\theta) = e^\theta, f(\theta) = \theta^3, f(\theta) = e^{2\theta} + \theta^3}$). We obtained the variational Laplace/MAP estimates by minimizing the negative log joint probability via a Newton conjugate gradient algorithm implemented in scipy \citep{scipy}. In all three models, we chose the same true parameter value ($\theta_0 = 1$) and the same prior and fixed parameters (${m_\theta = 10,s_\theta = 10,\sigma = 1}$). In all simulations, as \autoref{fig:nonlinearfig} illustrates, we observed that the distribution of the variational estimates $\mu_\theta$ increasingly concentrated on the true parameter value with growing $n$. These observations are in line with our theoretical derivations. 
\end{minipage}
\hfill
\begin{minipage}{0.46\textwidth}\raggedleft
  \includegraphics[width=\linewidth]{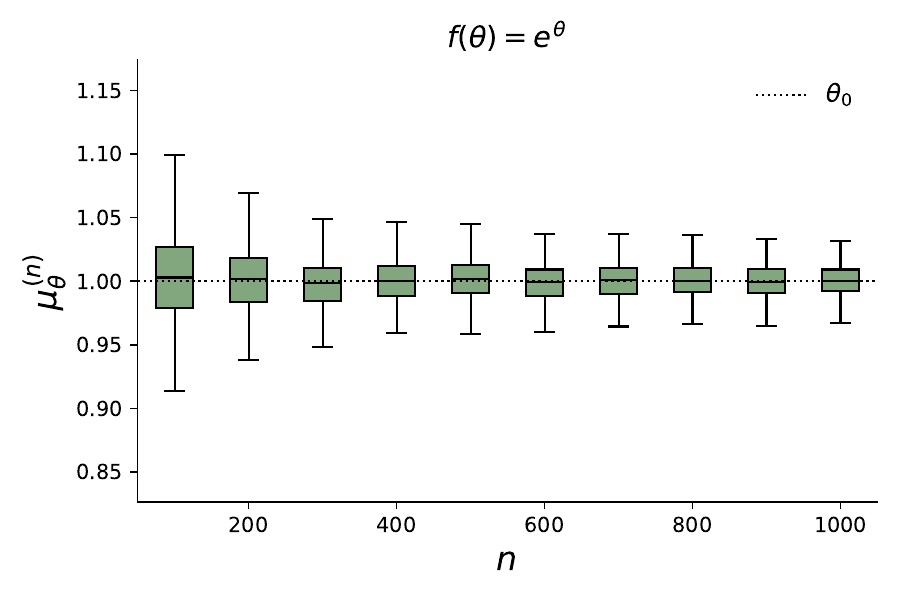} 
  \includegraphics[width=\linewidth]{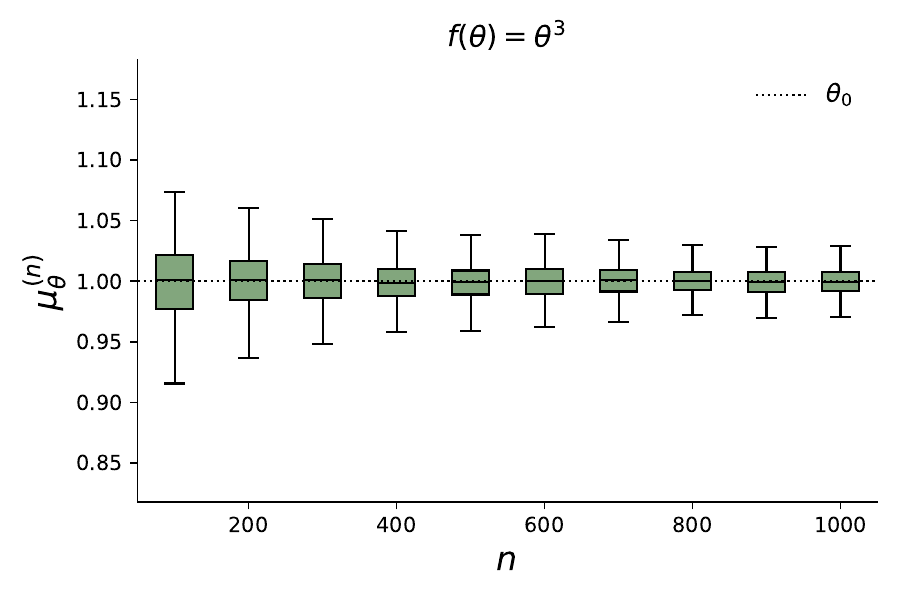}
  \includegraphics[width=\linewidth]{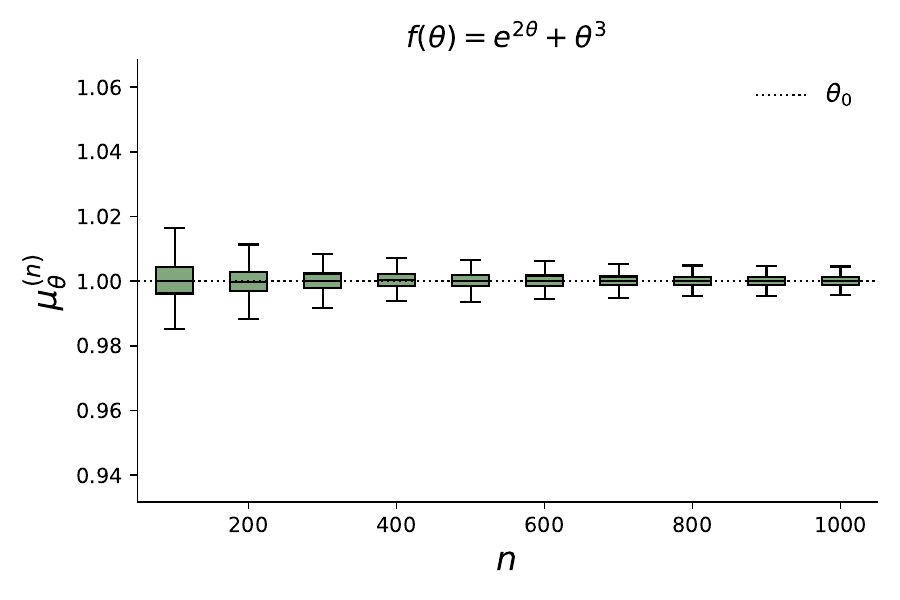}
\captionof{figure}{Boxplots of $500$ samples of variational expectation parameters $\mu_\theta^{(n)}$ with varying size $n$. True parameter is indicated by dotted line.}\label{fig:nonlinearfig}
\end{minipage}

\subsection{Linear Model}
To illustrate the convergence properties of variational Laplace in the linear model, we repeatedly drew $500$ samples with different size $n$ from a fixed model (${a=3, \theta_0 = 2, \lambda_0 = 2},\\ {m_\theta = m_\lambda = s_\theta=s_\lambda = 1}$).
We computed the variational mean and expecation parameters in an iterative manner according to the equations given in \eqref{updatelinear}
The result of these simulations demonstrate, that the theoretical derivations made in the previous chapter indeed hold in practice. We observed that for growing data size $n$, the distribution of the variational expectation parameters concentrates ever closer on the true parameters $(\theta_0,\lambda_0)$
as is shown in  \fullref{fig:probfig}. We furthermore observed that the variational variances tend to zero with $n$ growing, which also was in line with our predictions.
\begin{figure*}[htb]\label{linearplots}
\begin{subfigure}{.5\textwidth}
  \centering
  \includegraphics[width=\linewidth]{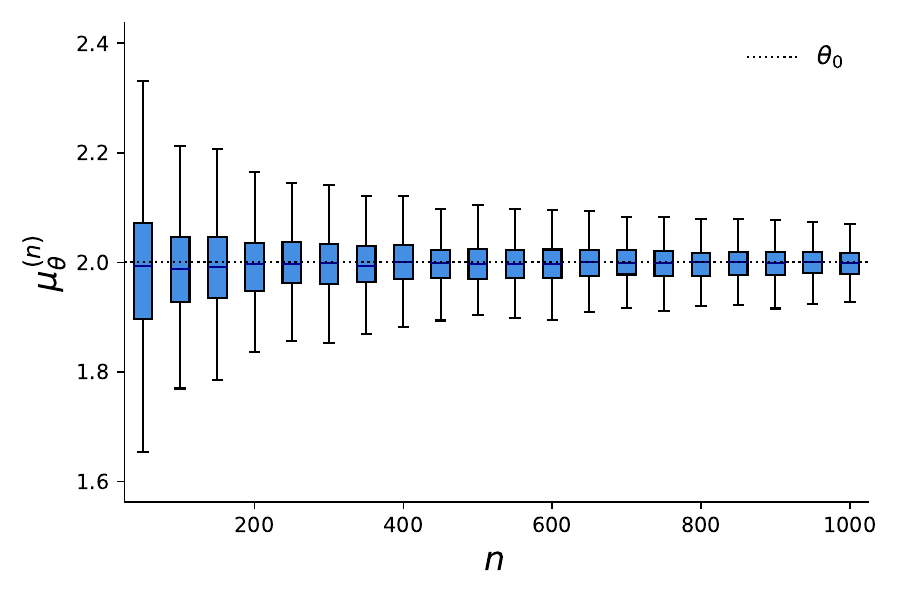}
  \label{fig:sfig1}
\end{subfigure}%
\hfill
\begin{subfigure}{.5\textwidth}
  \centering
  \includegraphics[width=\linewidth]{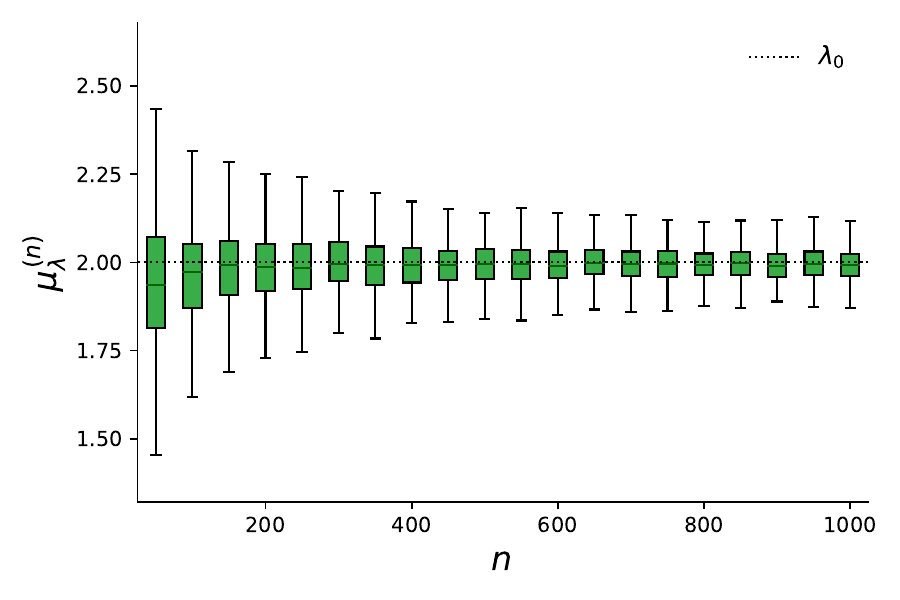}
  \label{fig:sfig2}
\end{subfigure}
\vskip\baselineskip
\begin{subfigure}{.5\textwidth}
  \centering
  \includegraphics[width=\linewidth]{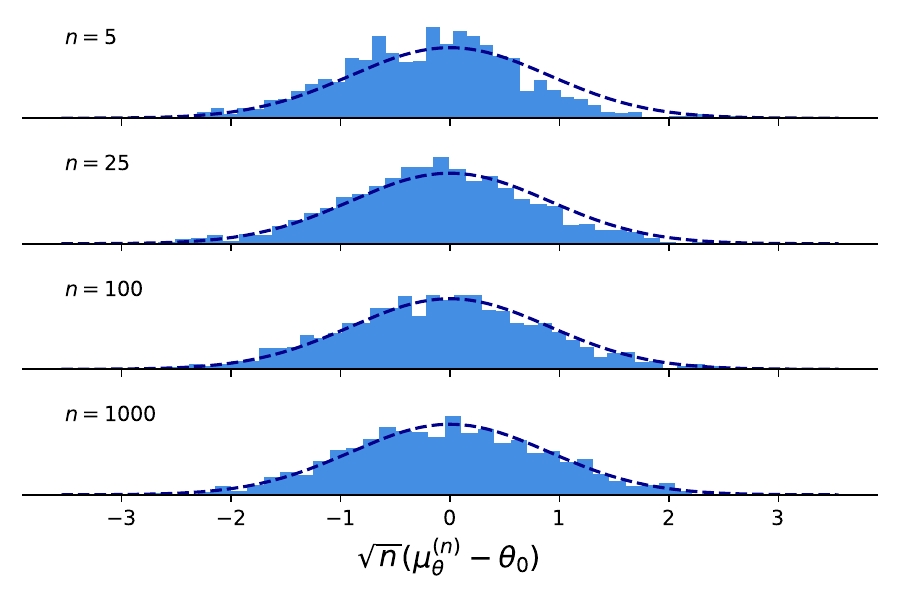}
  \label{fig:sfig1}
\end{subfigure}%
\hfill
\begin{subfigure}{.5\textwidth}
  \centering
  \includegraphics[width=\linewidth]{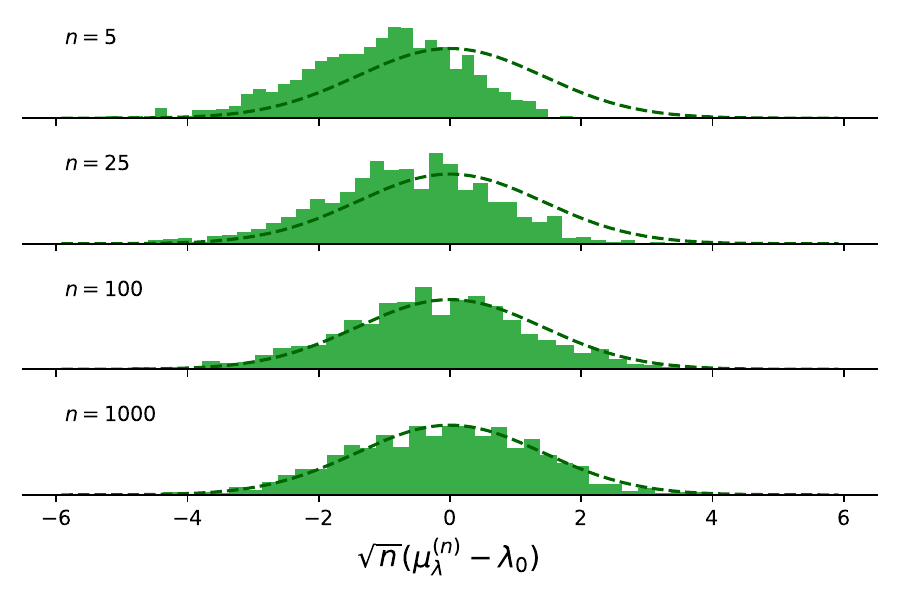}
  \label{fig:sfig2}
\end{subfigure}
\caption{\textbf{Top row}: Boxplots of $500$ samples of variational expectation parameters with varying size $n$ for $\theta$ (left) and $\lambda$ (right). True parameter is indicated by dotted line. \textbf{Bottom row}: Normalized histograms of rescaled variational expectation parameters for $\theta$ (left) and $\lambda$(right). Probability
density of the analytically derived limit distributions is plotted as solid line. }
\label{fig:probfig}
\end{figure*}
To illustrate the distributional limits,
we centered our variational means on the true parameters and rescaled them by the square root of the sample sizes. We computed normalized histograms ($40$ bins) of these values and plotted them against the marginal distributions of the analytically derived limit in \autoref{prop:asymptoticefficiency}, that is a Gaussian with mean zero and variance given by the respective diagonal element of the inverse Fisher information matrix. These plots, shown in the bottom row of \fullref{fig:probfig}, suggest that the rescaled parameters indeed approach this optimal Gaussian and are therefore asymptotically efficient estimators.
To give a visual intuition of the convergence of the variational densities in terms of total variation, we drew a small number of $10$ samples of growing size $n$ with the same parameters as before. We again computed rescaled, centered variational means and the rescaled variational variances.  In \fullref{fig:totalvarfig}, we plot the isocontour ellipses corresponding to one standard deviation in both directions for the resulting densities. We compare these isocontours with the respective ellipses of the limit density of the Bernstein-von-Mises theorem, that is a Gaussian with mean given by the rescaled, centered MLE and covariance matrix given by the inverse Fisher information matrix. We note that for larger sample size the resulting ellipses are nearly indistinguishable. 
\begin{figure}
\includegraphics[width=0.9\linewidth]{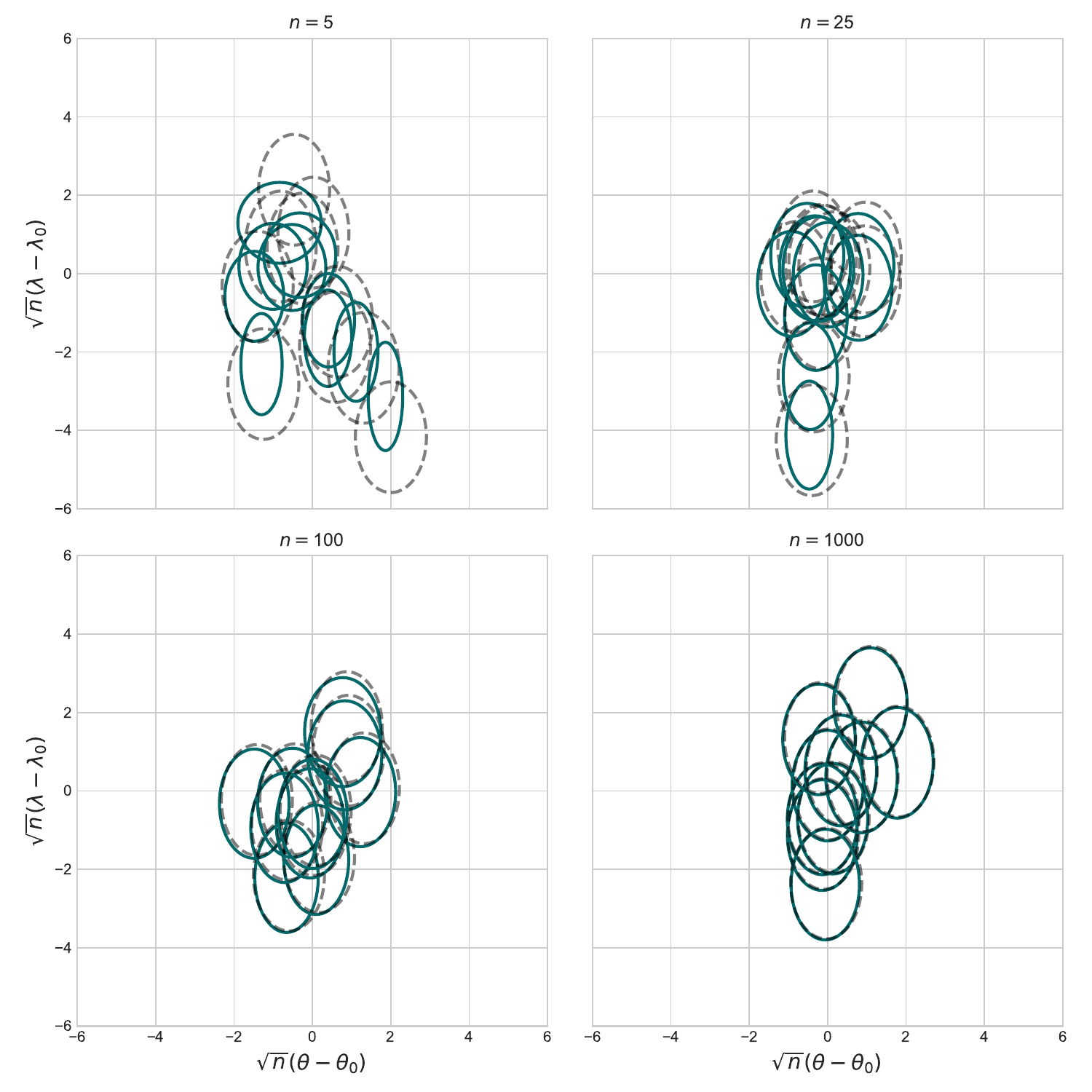}
\caption{Ellipses of half width and height given by the standard deviations corresponding to the respective entries in the diagonal covariance matrizes of the rescaled variational densities (solid blue lines) and a density with mean given by the rescaled MLE and covariance matrix given by the inverse Fisher information matrix (dashed lines).}
\label{fig:totalvarfig}
\end{figure}

\section{Discussion}\label{sec:discussion}
In this work, we have investigated asymptotic, frequentist statistical properties of variational Laplace, a scheme for approximate variational inference popular in the cognitive neuroscience community. We provided an introduction to all topics relevant for the understanding of this work: We briefly presented the main ideas behind variational inference in general and gave a distinction of free-form and fixed-form mean-field approaches. We detailed the iterative updates at the heart of variational Laplace and explicitly stated the update equations in particular example models. The frequentist asymptotic treatment of both point estimator and Bayesian posterior distributions was introduced. Subsequently, we exhibited the current state of research regarding these properties in variational inference. With this theoretical background elucidated, we proceeded to present our results: We pointed out that the expectation parameter in variational Laplace, in the case of one parameter, reduces to a maximum-a-posteriori estimate, such that in this simple case all theoretical properties of this procedure are inherited. We then proceeded to investigate a simple linear model in which this equivalence does not hold anymore, where we could confirm consistency and asymptotic normality of the variational expectation parameters, which also implied convergence in total variation distance of the variational distributions. The results of these applied examples were supported by simulation experiments that we conducted. Additionally, we established conditions under which variational Laplace will yield consistent and asymptotically efficient estimators.

As we have noted before, we are not aware of any work investigating these theoretical properties of variational Laplace. There are, however, a number of studies that have pursued to justify the use of variational Laplace by simulation studies. In terms of parameter recovery, schemes based on variational Laplace in the context of nonlinear dynamical systems identification have shown satisfying results \citep{friston2008variationalfilt,daunizeau2009variational,friston2008variationaldem}. Also, in simulation studies of DCM for EEG \citep{ostwald2016probabilistic} and GLM for FMRI \citep{starke2017}, using schemes similar in nature to the one described here, good parameter recovery was observed in most cases - with some caveats to the estimation of covariance component parameters. We furthermore note that many studies also considered the accuracy of variational Laplace in terms of model selection: Here, in simulations, variational Laplace and related methods in general were able to adequately retrieve the correct model generating the data from a number of alternatives \citep{friston2007variational,starke2017,friston2008multiple,friston2019variational,friston2008Bayesian,chappell2008variational}.
In the following, we want to discuss limitations of our approach, possible future research and implications for practice. 
\subsection{Model Assumptions}
Compared to models used in practice, all discussed models in this work are of considerable simplicity. While this facilitated the analytic treatment of the resulting equations in the variational Laplace scheme, these results may not directly transfer to more complicated models. In a similar manner, our general theorem makes assumptions that may not hold in typical applications, as for example a compact parameter space. Nevertheless, we suspect that the approaches taken here may be generalized to some extent to models used in the field. As an example, the linear model that we discussed here is a one-dimensional simplification of the popular example of a GLM where the covariance matrix is parameterized by a weighted sum of fixed, known matrizes $\Phi_l$, that is 
$$
Q(\lambda) = \sum_{l}  \exp(\lambda_l)\Phi_l,
$$
 compare also \citet{friston2007variational,lopez2014algorithmic,starke2017} and our discussion of this special case in \autoref{sec:linearlaplace}. Hence, one could speculate that our derivations made for the one dimensional case may also generalize to more contrived examples, provided that the model is still identifiable - which may be guaranteed by choosing $\Phi_l$ linearly independent and $X$ having full rank. However, in this more intricate setting it might not be possible to give an explicit analytical expression for the iterates anymore \citep{starke2017}. Further note that in this case, as already observed in \citet{friston2007variational} the Hessian with respect to the covariance component parameters will not be positive definite in general, such that in particular the log-concavity of the model is not given any more, which may impede derivations similar to ours. Also note that this might be at the root of general problems with covariance component estimation that seem to commonly arise in this situation \citep{starke2017}. 

We still believe that the most promising approach in this situation might be to consider the implicit definition of the fixed points as stationary points of the 'variational energies' as we did in \fullref{varlaptheorem} such that existing tools taylored to establish asymptotic properties of such zero estimators (Z-estimators) may be leveraged \citep{van2000asymptotic}.  We also have to note that our treatment would correspond to repeatedly drawing independent, identically distributed samples with the same fixed design matrix $X$. Although this consideration may still have some theoretical merit, a more natural approach is to consider just one sample $y$ with the row-size of the design matrix growing to $\infty$. Generally, consistency of (variational) estimators can be established in such a setting under additional assumptions on the matrix $X$ \citep{amemiya1974multivariate,van2000asymptotic,you2014variational}. Thus, it may be an interesting question to investigate whether our results transfer to this more natural setting.

At several points in the derivations in this work, we assumed that a fixed point of the variational Laplace update iteration exists and took this as a starting point for analytical treatment. The existence of such a fixed point is not guaranteed a priori \citep{granas2003elementary}. A more deliberate treatment of our examples would thus have included the verification of the existence of such a fixed point. It might be an interesting venture in itself to establish conditions that ensure the existence of a fixed point of the variational Laplace iterations, or a probablistic version of such a statement as for example has been obtained in the case of free-form mean-field variational inference in \citet{wang2012convergence}. In this regard, it could also be explored whether using the approximated versions of the Hessian, as in \eqref{varlaptheta}, affects the answer to this question.

\subsection{Variational Laplace in Practice}
In a restricted setting, we could show consistency and asymptotic normality of variational Laplace. Thus, these results may serve as a first step towards a theoretical justification of the use of this procedure in cognitive neuroscience. This is noteworthy, since the derivation of the update equations as we have seen is rather heuristically motivated. Still, as we saw, in the large sample limit, these heuristical arguments might suffice to recover true parameter values with this scheme. However, we want to emphasize that our results only concern the variational expectation parameters as point estimators. In particular, we have argued that if one considers the variational density as a proxy to the posterior distribution, the covariance parameter estimates of variational Laplace, just as in free-form mean-field variational inference, underestimate the true variance of the posterior asymptotically \citep{wang2019frequentist}.
This is remarkable, especially when one considers the direct comparison to the 'classical' Laplace approximation: As the latter is a special case of variational Laplace, under similar conditions  the posterior variance of the classical Laplace approximation will converge to the 'correct' asymptotic variance, namely the inverse Fisher information matrix - compare equation \eqref{asymptoticvariance}. Under the assumption that these convergences hold, one can thus see that asymptotically, variational Laplace will in this sense behave worse than the classical Laplace approximation. Hence, at least under the lense of frequentist asymptotics, our results do not provide a reason to prefer variational Laplace over the classical Laplace approximation, in particular since the former will typically involve the optimization of slightly more complicated formulae. Therefore, additional research might be needed to justify the use of variational Laplace over the related method.

In \citet{friston2007variational}, the introduction of variational Laplace over the classical Laplace approximation was motivated by cases of a ponounced non-Gaussianity of the posterior in a finite setting, "especially when the mode is not near the majority of the probability mass". Thus, it may be natural to study whether the set of fixed points of the variational Laplace scheme in such cases is closer to the lions share of the distribution, in a finite setting, and whether in turn the approximation to the posterior is superior.
If this yields positive results, one could possibly modify the method in the following way: Retaining the mean parameters obtained by variational Laplace but using the full Hessian of the model at these mean parameters, that is $$\Sigma^{(n)} = - H_{L{y}}\left(\mu^{(n)}\right)^{-1},$$ would leave the point estimates untouched but could deal with the problem of underestimating the variance. Note that a similar - albeit differently derived - modification of free-form mean-field variational approaches has been proposed to overcome the underestimation of variance \citep{giordano2015linear}.

Finally, we also want to mention that the results presented here did not concern the use of the $\operatorname{ELBO }$ for model comparison. This, however, is one frequent application of variational methods in cognitive neuroscience, where approximated versions of the $\operatorname{ELBO}$ derived by variational Laplace are used for model comparison. Hence, this particular application of variational Laplace should be analyzed in future work, both in a finite setting as well as in the asymptotic limit -  here, one also could draw from a growing body of literature investigating model selection in variational inference \citep{cherief2018consistency,you2014variational,mcgrory2007variational}.

\section{Conclusion}
In conclusion, in this work we studied asymptotic properties of variational Laplace in a simple setting. We are convinced that theoretically establishing asymptotic quality properties of methods widely used in practice is an endeavour that should receive ongoing attention by practicioners. 
In particular this is of importance when, as in this case, approximations and heuristical derivations are employed to design a procedure. Verifying whether these procedures adhere to standard quality criteria thus ultimately justifies their use and puts these schemes on a more stable theoretical foundation. We hope that our small contribution in this direction may serve as a valuable starting point for future investigation of these properties.

\appendix
\begin{appendices}

\counterwithin{lemma}{section}
\counterwithin{theorem}{section}

\section{Derivation of Main Results}\label{sec:derivations}

Below, we provide proofs of our main results. In \fullnameref{sec:aux} we provide additional details on the notation used here and also state important results from the literature which are used in our derivations.
\subsection{Single Parameter Model}
\subsubsection{Gradient-Based Update Scheme}
If we write out the full log-joint of the model , it reads:
\begin{equation}
\begin{aligned}
\ln p(\theta,y) = &-\frac{1}{2} (f(\theta) \mathbf {1}_n - y)^T s^{-2} I_n (f(\theta) \mathbf {1}_n - y) 
-\frac{1}{2}  \frac{(\theta - m_\theta)^2}{s_\theta^2} + C
\end{aligned}
\end{equation}
Now, calculating the derivatives with respect to $\theta$ of this yields for the first derivative
\begin{equation}
\begin{aligned}
&\frac{\partial}{\partial \theta} \ln p(\theta,y) = - (f'(\theta)\mathbf {1}_n)^T s^{-2} I_n (f(\theta) \mathbf{1}_n - y) -  \frac{\theta - m_\theta}{s_\theta^2} \\
&=  \frac{m_\theta-\theta}{s_\theta^2} - \frac{n}{s^2} (f'(\theta)(f(\theta) - \overline{y}) \\
\end{aligned}
\end{equation}
and for the second derivative
\begin{equation}
\begin{aligned}
& \frac{\partial}{\partial^2 \theta} \ln p(\theta,y) = -(f(\theta)'\mathbf{1}_n)^T s^{-2} I_n (f(\theta)' \mathbf{1}_n  - (f(\theta)'' \mathbf{1}_n)^T s^{-2} I_n (f(\theta) \mathbf{1}_n - y) - \frac{1}{s_\theta^2},
\end{aligned}
\end{equation}
which, after neglecting the second derivative of the function $f$ becomes 
\begin{equation}
\begin{aligned}
&\frac{\partial}{\partial^2 \theta} \ln p(\theta,y) \approx -(f(\theta)'\mathbf{1}_n)^T s^{-2} I_n (f(\theta)' \mathbf{1}_n  - \frac{1}{s_\theta^2}\\
& = -\frac{n}{s^2} f'(\theta)^2 - \frac{1}{s_\theta^2} = -\frac{n f'(\theta)^2s_\theta^2 - s^2}{s^2 s_\theta^2}.
\end{aligned}
\end{equation}

\subsubsection{Asymptotic Results}\label{sec:proofonedimensional}
\myparagraph{Proof of \fullref{prop1}}
To prove \fullref{prop1}, first observe that 
\begin{equation}
M^{(n)}(\theta,y) \overset{a.s.}{\longrightarrow} -\operatorname{D_{KL}}(p(y|\theta_0)||p(y|\theta)) = M(\theta)
\end{equation}
by the law of large numbers. 
Furthermore, define a maximum-likelihood-estimator 
\begin{equation}\label{defmle}
\hat{\theta}^{(n)}_{\text{MLE}} = \underset{\theta \in \Theta}{\arg \max} \  M^{(n)}(\theta,y).
\end{equation}
This estimator will be consistent by \fullref{M-estimator}.
Also observe that by definition we have
\begin{equation}
\begin{aligned}
M^{(n)}(\mu^{(n)},y) &= M^{(n)}(\hat{\theta}^{(n)}_{\text{MLE}},y) - ( M^{(n)}(\hat{\theta}^{(n)}_{\text{MLE}},y) - M^{(n)}(\mu^{(n)},y)) \\ &\geq M^{(n)}(\theta_0,y) - ( M^{(n)}(\hat{\theta}^{(n)}_{\text{MLE}},y) - M^{(n)}(\mu^{(n)},y))
\end{aligned}
\end{equation}
Thus, if we show that 
\begin{equation}\label{claim}
 R^{(n)} = M^{(n)}(\hat{\theta}^{(n)}_{\text{MLE}},y) - M^{(n)}(\mu^{(n)},y) = o_{\mathbb{P}}(1)
\end{equation}
then the consistency of $\hat{\theta}(y)$ follows from \fullref{M-estimator}.
Consider any $\varepsilon >0$. We will consider two cases separately. 
First, assume we have that
\begin{equation}\label{caseone}
\begin{aligned}
&M^{(n)}(\hat{\theta}^{(n)}_{\text{MLE}},y) - M^{(n)}(\mu^{(n)},y) > \varepsilon \\
&\implies M^{(n)}(\hat{\theta}^{(n)}_{\text{MLE}},y)   > M^{(n)}(\mu^{(n)},y)+ \varepsilon   \\
&\implies M^{(n)}(\hat{\theta}^{(n)}_{\text{MLE}},y)  + \frac{1}{n} \ln p(\hat{\theta}^{(n)}_{\text{MLE}})  > M^{(n)}(\mu^{(n)},y) +  \frac{1}{n} \ln p(\hat{\theta}^{(n)}_{\text{MLE}}) +\varepsilon    \\
\end{aligned}
\end{equation} 
By the consistency of the MLE, we can assume that  $\hat{\theta}^{(n)}_{\text{MLE}}$ is in some  compact set $K$ in a neighborhood of $\theta_0$, where we will have that 
\begin{equation}
 -\infty < \underset{\theta \in K}{\inf} \ln p(\theta) \leq \ln p(\hat{\theta}^{(n)}_{\text{MLE}}),
\end{equation}
by the continuity of the prior and its positivity at $\theta_0$.
Thus, in this compact set, for large enough $n$, we have that
\begin{equation}
\frac{1}{n} \ln p(\hat{\theta}^{(n)}_{\text{MLE}}) +\varepsilon > \frac{1}{n} \ln p(\mu^{(n)}) 
\end{equation}
since $\ln p(\theta)$ is bounded from above on all of $\Theta$. Plugging this inequality into \eqref{caseone} would yield 
\begin{equation}
M^{(n)}(\hat{\theta}^{(n)}_{\text{MLE}},y)  + \frac{1}{n} \ln p(\hat{\theta}^{(n)}_{\text{MLE}})  > M^{(n)}(\mu^{(n)},y) +  \frac{1}{n} \ln p(\mu^{(n)}),
\end{equation} 
which is prohibited by definition of $\mu^{(n)}$ as a maximizer. Thus, for large $n$, the first case will only occur if   $\hat{\theta}^{(n)}_{\text{MLE}} $ is not in $K$, which has arbitrarily low probability.
On the other hand, we see that 
\begin{equation}
\begin{aligned}
&M^{(n)}(\mu^{(n)},y)-M^{(n)}(\hat{\theta}^{(n)}_{\text{MLE}},y) > \varepsilon \\
&\implies M^{(n)}(\mu^{(n)},y) > M^{(n)}(\hat{\theta}^{(n)}_{\text{MLE}},y),
\end{aligned}
\end{equation}
which is impossible by the definition of the MLE in \eqref{defmle}. Thus, our claim \eqref{claim} is shown and the consistency of the MAP-estimate is established.

\myparagraph{Proof of \fullref{prop2}}
As we stated before, this proof closely follows \cite{van2000asymptotic}, where it is employed to proof consistency of the MLE. Here, we only introduce slight modifications to adjust to the MAP setting.
We define
\begin{equation}
m_\theta(y) = \ln \frac{p(y|\theta)}{p(y|\theta_0)}
\end{equation}
and  note that 
\begin{equation}
M^{(n)}(\theta,y) = \sum_{i=1}^{n} m_\theta(y) \overset{a.s.}{\longrightarrow} \ \mathbb{E}_{\theta_0}[m_{\theta}(y)]  =  -\operatorname{D_{KL}}(p(y|\theta_0)||p(y|\theta))
\end{equation}
by the law of large numbers, which has an unique maximum at $\theta_0$, by our assumption.
The idea of the proof is to make the parameter space compact. That is, we define $\overline{\mathbb{R}} = \mathbb{R} \cup \{-\infty,\infty \}$. A possible metric for the above parameter space is $d(x,y):= |\arctan(x) - \arctan(y)|$. Furthermore define
\begin{equation}
\begin{aligned}
m_{-\infty}(y) &= \underset{\theta \to - \infty} {\lim}  m_\theta(y) =: c_{-}\\
m_{\infty}(y) &= \underset{\theta \to -\infty} {\lim}  m_\theta(y) =: c_{+}. \\
\end{aligned}
\end{equation}
The values of these limits are determined by the limiting behaviour of $f$. Note that by our identifiability assumption and continuity we have $\underset{\theta \to \pm \infty}{\lim} f(\theta) \neq f(\theta_0)$.
Furthermore, we admit the values $c_{-}$ = $c_{+}$ = $-\infty$, which will be the case if $f(\theta) \to +-\infty$.
Note that  in this case  $\mathbb{E}_{\theta_0}[m_{\infty}(y)] =-\infty$, which is allowed due to the formal definition of the expectation as a Lebesgue integral and due to some bound we will see below.
These values will however, not bother the discussion, as in this case the maximum will be taken at some finite value.
We now define random variables $m_U:\overline{\mathbb{R}} \to \mathbb{R} \cup \{-\infty\}$ by
\begin{equation}
m_U(y):= \underset{\theta \in U}{\sup} \ m_\theta(y),
\end{equation}
where $U$ is any open Ball in $\overline{\mathbb{R}}$. We emphasized that $\infty$ will not be a value these functions take, as due to the part of the likelihood varying with $\theta$ is always nonpositive.
Even more,  we have
\begin{equation}
\frac{p(y|\theta)}{p(y|\theta_0)} \leq \exp(\frac{1}{2 \sigma^2} ( y-f(\theta_0)^2)),
\end{equation}
Such that 
\begin{equation}
\mathbb{E}_{\theta_0}[m_U(y)] \leq \int   \frac{1}{2 \sigma^2} ( y-f(\theta_0)^2)  p(y|\theta_0) dy =   \frac{1}{2}.
\end{equation}
Here again, this expectation is allowed to take the value $-\infty$.
Furthermore, we assume that $m_U$ indeed are measurable, although this seems to be an assumption that can be worked around \citep{geyer2012wald}.
Consider any monotone decreasing sequence of radii $r_n$ that goes to 0. Continuity implies that 
\begin{equation}
m_{B_{r_n}(\theta)} \longrightarrow m_\theta,
\end{equation}
where $B_{r_n}(\theta)$ is an open ball centered on $\theta$ with radius $r_n$.  We have that $ m_{B_{r_n}(\theta)} \geq m_{B_{r_m}(\theta)}$ for all $m \geq n$, as taking the supremum over a smaller ball will not increase. Furthermore, the previous note showed that one can apply the monotone convergence theorem,
as we have a decreasing sequence of functions with integral strictly smaller than $\infty$.
That is, we get that
\begin{equation}
\underset{n \to \infty}{\lim} \mathbb{E}_{\theta_0}[m_{B_{r_n}(\theta)}] = \mathbb{E}_{\theta_0}[m_\theta].
\end{equation}
This step is important because of the following: Since the Kullback-Leibler divergence has an unique minimum at $\theta_0$, for $\theta \neq \theta_0$ we will have that $\mathbb{E}_{\theta_0}[m_\theta] < \mathbb{E}_{\theta_0}[m_{\theta_0}]$. The last step now shows us that we can extend this the supremum over some open ball around $\theta$,  that is we can always fix a radius $r$ small enough such that $ \mathbb{E}_{\theta_0}[m_{B_r(\theta)}] <  \mathbb{E}_{\theta_0}[m_{\theta_0}]$.
We now fix $\varepsilon$ and consider the compact set: $B_\varepsilon(\theta_0)^c : = \{\theta \in \overline{\mathbb{R}} : d(\theta,\theta_0) \geq \varepsilon\}$. Now we will apply standard results from calculus: We have our compact set  $B_\varepsilon(\theta_0)^c$ covered by all the open balls centered on values of $\theta$ in that compact set. That is
\begin{equation}
B_\varepsilon(\theta_0)^c  \subset \bigcup\limits_{\theta \in B_\varepsilon(\theta_0)^c} U_\theta.
\end{equation}
Due to compactness, there exists a finite number of Balls $U_{\theta_1}...U_{\theta_l}$, such that
\begin{equation}
B_\varepsilon(\theta_0)^c  \subset \bigcup \limits_{k = 1}^{l} U_{\theta_k},
\end{equation}
and we can take those balls to have radius smaller than $r$. We thus have that 
\begin{equation}\label{inequality}
\begin{aligned}
&\underset{\theta \in B_\varepsilon(\theta_0)^c}{\sup}   \frac{1}{n}\sum_{i=1}^{n} m_{\theta}(y_i) +\frac{1}{n}\ln p(\theta)  \\
&\leq \underset{\theta \in B_\varepsilon(\theta_0)^c}{\sup}  \frac{1}{n}\sum_{i=1}^{n} m_{\theta}(y_i)  + \underset{\theta \in B_\varepsilon(\theta_0)^c}{\sup} \frac{1}{n}\ln p(\theta) \\
& \leq \underset{k=1,...l}{\max} \   \frac{1}{n}\sum_{i=1}^{n} m_{U_{\theta_k}}(y_i)  + \frac{C}{n} \\,
\end{aligned}
\end{equation}
where the second line follows by splitting the supremum, and the third line follows by our definition of $m_U$ and by dropping negative terms in the logarithm of the normal prior, leaving some constant $C$.
Now, we have that 
\begin{equation}\label{supconv}
\underset{k=1,...l}{\max} \  \frac{1}{n}\sum_{i=1}^{n} m_{U_{\theta_k}}(y_i)  + \frac{C}{n} \overset{a.s.}\longrightarrow \underset{k=1,...l}{\max} \  \mathbb{E}[m_{U_{\theta_k}}] < \mathbb{E}_{\theta_0}[m_{\theta_0}]
\end{equation}
by the strong law of large numbers. Note that  we can safely apply the strong law of large numbers here regardless of the extension to our compact space, since every of these Balls will still include values from $\mathbb{R}$.
But, on the other hand, we also have that 
\begin{equation}\label {ineq_one}
(\mu^{(n)} \in B_\varepsilon(\theta_0)^c) \implies \underset{\theta \in B_\varepsilon(\theta_0)^c}{\sup}   \frac{1}{n}\sum_{i=1}^{n} m_{\theta}(y_i) +\frac{1}{n}\ln p(\theta)  \geq  \frac{1}{n}\sum_{i=1}^{n} m_{\mu^{(n)}}(y_i) +\frac{1}{n}\ln p(\mu^{(n)}) 
\end{equation}
And, by definition,
\begin{equation}\label{ineq_two}
\frac{1}{n}\sum_{i=1}^{n} m_{\mu^{(n)}}(y_i) +\frac{1}{n}\ln p(\mu^{(n)}) \geq \frac{1}{n}\sum_{i=1}^{n} m_{\theta_0}(y_i) +\frac{1}{n}\ln p(\theta_0) \overset{\mathbb{P}}\longrightarrow \mathbb{E}_{\theta_0}[m_{\theta_0}].
\end{equation}
These two convergences show that $\mathbb{P}[(\mu^{(n)} \in B_\varepsilon(\theta_0)^c] \to 0$. Formally, this can be seen by noting that if $\mu^{(n)} \in B_\varepsilon(\theta_0)^c$, for a sufficiently small $\delta >0$, one either has
\begin{equation}
\frac{1}{n}\sum_{i=1}^{n} m_{\theta_0}(y_i) +\frac{1}{n}\ln p(\theta_0)  > \mathbb{E}_{\theta_0}[m_{\theta_0}] -\delta .
\end{equation}
But then, by \eqref{ineq_one} and \eqref{ineq_two}
\begin{equation}
\underset{\theta \in B_\varepsilon(\theta_0)^c}{\sup}   \frac{1}{n}\sum_{i=1}^{n} m_{\theta}(y_i) +\frac{1}{n}\ln p(\theta)  >  \mathbb{E}_{\theta_0}[m_{\theta_0}] -\delta,
\end{equation}
which, by \eqref{inequality} and \eqref{supconv} will have arbitrary small probability for small enough $\delta$.
On the other hand, the probability of  
\begin{equation}
\frac{1}{n}\sum_{i=1}^{n} m_{\theta_0}(y_i) +\frac{1}{n}\ln p(\theta_0)  \leq \mathbb{E}_{\theta_0}[m_{\theta_0}] -\delta 
\end{equation}
goes to $0$ by the law of large numbers. Thus, we have  established that 
$\mathbb{P}[(\mu^{(n)} \in B_\varepsilon(\theta_0)^c] \to 0$ which is exactly the definition of $\mu^{(n)} \overset{\mathbb{P}}{\longrightarrow} \theta_0$.

\subsection{Linear Model}

\subsubsection{Variational Laplace Update Equations}\label{sec:linearupdates}
In the following, we derive the variational Laplace update scheme for the linear model.
The log-joint-probability reads, up to constants,
\begin{equation}
\begin{aligned}
\ln p(y,\theta,\lambda) &= -\frac{1}{2} \sum_{i=1}^{n} \frac{(y_i - a \theta)^2}{ \exp(\lambda)}  -\frac{n}{2} \ln \exp(\lambda) \\
&- \frac{1}{2} \frac{(\theta-m_\theta)^2}{s_\theta^2} \\
&- \frac{1}{2} \frac{ (\lambda - m_\lambda)^2}{s_\lambda^2}
\end{aligned}
\end{equation}
The second derivatives are
\begin{equation}
\begin{aligned}
&H_L^\theta = \frac{\partial^2 \ln p(y,\theta,\lambda)}{\partial^2  \theta} = - (\frac{a^2 n}{\exp(\lambda)} + \frac{1}{s_\theta^2}) = - \frac{a^2ns_\theta^2 + \exp(\lambda)}{\exp(\lambda)s_\theta^2} \\
&H_L^\lambda = \frac{\partial^2 \ln p(y,\theta,\lambda)}{\partial^2 \lambda} = -\frac{1}{2}( \frac{\sum_{i=1}^{n}(y_i -a\theta)^2}{\exp(\lambda)} + \frac{2}{s_\lambda^2}) \\
&= -\frac{1}{2}( \frac{\sum_{i=1}^{n}(y_i -a\theta)^2 s_\lambda^2 + 2 \exp(\lambda)}{\exp(\lambda)s_\lambda^2}).
\end{aligned}
\end{equation}
Thus, having found an optimal $\mu_\theta^*$, the variance parameter is obtained by
\begin{equation}
\sigma_\theta^* = -H_{L_y}^\theta(\mu_\theta^*,\mu_\lambda)^{-1}=  \frac{\exp(\mu_\lambda)s_\theta^2}{a^2ns_\theta^2 + \exp(\mu_\lambda)} .
\end{equation}
Likewise, having found an optimal $\mu_\lambda^*$, the variance parameter is obtained by
\begin{equation}
\sigma_\lambda^* = -H_{L_y}^\lambda(\mu_\theta,\mu_\lambda^*)^{-1} = \frac{2\exp(\mu_\lambda^*)s_\lambda^2}{\sum_{i=1}^{n}(y_i -a\theta)^2 s_\lambda^2 + 2 \exp(\mu_\lambda^*)}
\end{equation}
Hence, the "variational energies" are
\begin{equation}\label{Itheta}
\begin{aligned}
I(\mu_\theta) &=L_y(\mu_\theta,\mu_\lambda^*)  + \frac{1}{2} H_{L_y}^\lambda(\mu_\theta,\mu_\lambda^*) \sigma_\lambda^* \\
&=  -\frac{1}{2} \sum_{i=1}^{n} \frac{(y_i - a \mu_\theta)^2}{ \exp(\mu_\lambda^*)} - \frac{1}{2} \frac{(\mu_\theta-m_\theta)^2}{s_\theta^2}  -\frac{1}{4} ( \frac{\sum_{i=1}^{n}(y_i -a \mu_\theta)^2}{\exp(\mu_\lambda^*)} + \frac{2}{s_\lambda^2}) \sigma_\lambda^* +C
\end{aligned}
\end{equation}
and
\begin{equation}\label{Ilambda}
\begin{aligned}
I(\mu_\lambda) &= L_y(\mu_\theta^*,\mu_\lambda)  + \frac{1}{2} H_{L_y}^\theta(\mu_\theta^*,\mu_\lambda^*)\sigma_\theta^*  \\
		 & =  -\frac{1}{2} \sum_{i=1}^{n} \frac{(y_i - a \mu_\theta^*)^2}{ \exp(\mu_\lambda)}  -\frac{n}{2} \mu_\lambda - \frac{1}{2} \frac{ (\mu_\lambda - m_\lambda)^2}{s_\lambda^2}  -\frac{1}{2}(\frac{a^2 n}{\exp(\mu_\lambda)} + \frac{1}{s_\theta^2}) \sigma_\theta^* +C
\end{aligned}
\end{equation}
The advantage of this simple model is that the maxima of the above formulae can be stated in closed-forms. First, we find the maximum of $I(\mu_\theta)$. To do so, we calculate the derivative:
\begin{equation}
\begin{aligned}
\frac{\partial I(\mu_\theta)}{\partial \mu_\theta} =  (1+ \frac{\sigma_\lambda^*}{2})  \sum_{i=1}^{n} \frac{a(y_i - a \mu_\theta)}{\exp(\mu_\lambda^*)}  + \frac{m_\theta - \mu_\theta}{s_\theta^2}
\end{aligned}
\end{equation}
We proceed by setting this derivative to $0$
\begin{equation}
\begin{aligned}
&\frac{\partial I(\mu_\theta)}{\partial \mu_\theta} =0 \\
&\iff   (1+ \frac{\sigma_\lambda^*}{2})  \frac{n a^2 \mu_\theta}{\exp(\mu_\lambda^*)} + \frac{\mu_\theta}{s_\theta^2} =   (1+ \frac{\sigma_\lambda^*}{2}) a \frac{\sum_{i=1}^{n} y_i}{\exp(\mu_\lambda^*)}  + \frac{m_\theta}{s_\theta^2} \\
& \iff   \mu_\theta =  \frac{  (1+ \frac{\sigma_\lambda^*}{2}) a \frac{\sum_{i=1}^{n} y_i}{\exp(\mu_\lambda^*)}  + \frac{m_\theta}{s_\theta^2}}{ (1+ \frac{\sigma_\lambda^*}{2})  \frac{n a^2}{\exp(\mu_\lambda^*)} + \frac{1}{s_\theta^2}} \\
& \iff \mu_\theta = \frac{   a (1+ \frac{\sigma_\lambda^*}{2})  \frac{\sum_{i=1}^{n} y_i}{n}  + \frac{\exp(\mu_\lambda^*)m_\theta}{n s_\theta^2}}{ a^2   (1+ \frac{\sigma_\lambda^*}{2}) +\frac{\exp(\mu_\lambda^*)}{n s_\theta^2}}
\end{aligned}
\end{equation}
Likewise, the maximum of $I(\mu_\lambda)$ can be found by setting the derivative to $0$. We have 
\begin{equation}
\begin{aligned}
\frac{\partial I(\mu_\lambda)}{\partial \mu_\lambda} = \frac{1}{2}  \sum_{i=1}^{n} \frac{(y_i - a \mu_\theta^*)^2}{ \exp(\mu_\lambda)}  + \frac{1}{2} \frac{a^2 n}{\exp(\mu_\lambda)} \sigma_\theta^*  - \frac{\mu_\lambda - m_\lambda}{s_\lambda^2} -\frac{n}{2}
\end{aligned}
\end{equation}
Thus
\begin{equation}
\begin{aligned}
&\frac{\partial I(\mu_\lambda)}{\partial \mu_\lambda} =0 \\
&\iff \sum_{i=1}^{n} \frac{(y_i - a \mu_\theta^*)^2}{ \exp(\mu_\lambda)} + \frac{a^2 n}{\exp(\mu_\lambda)} \sigma_\theta^* = 2\frac{\mu_\lambda - m_\lambda}{s_\lambda^2} + n \\
&\iff  \exp(\mu_\lambda) (\frac{2\mu_\lambda}{s_\lambda^2} - \frac{2 m_\lambda}{s_\lambda^2} + n) =  \sum_{i=1}^{n} (y_i - a \mu_\theta^*)^2 +a^2 n \sigma_\theta^*
\end{aligned}
\end{equation}
Using \fullref{lambertlemma}, this has an unique solution given by
\begin{equation}
\begin{aligned}
\mu_\lambda &= W\left( \frac{s_\lambda^2}{2} \left( \sum_{i=1}^{n} (y_i - a \mu_\theta^*)^2+a^2 n \sigma_\theta^*\right) \exp\left( n \frac{s_\lambda^2}{2} - m_\lambda \right) \right) -  ( n \frac{s_\lambda^2}{2} - m_\lambda) \\
&= W\left( n\frac{s_\lambda^2}{2} \left( \frac{\sum_{i=1}^{n} (y_i - a \mu_\theta^*)^2}{n}+a^2  \sigma_\theta^*\right) \exp\left( n \frac{s_\lambda^2}{2} - m_\lambda \right) \right) -  ( n \frac{s_\lambda^2}{2} - m_\lambda),
\end{aligned} 
\end{equation}
where $W$ is Lambert's $W$-function that inverts the function $x \mapsto x \ \exp(x)$.
Thus, we can now state the full update equations for one iteration as follows: Given $(\mu_\theta,\sigma_\theta,\mu_\lambda,\sigma_\lambda)$ from a previous iteration, these read
\begin{equation}
\begin{aligned}
&\mu_\theta^* =  \frac{   a (1+ \frac{\sigma_\lambda}{2})  \frac{\sum_{i=1}^{n} y_i}{n}  + \frac{\exp(\mu_\lambda)m_\theta}{n s_\theta^2}}{ a^2   (1+ \frac{\sigma_\lambda}{2}) +\frac{\exp(\mu_\lambda)}{n s_\theta^2}}\\
&\sigma_\theta^* =\frac{\exp(\mu_\lambda)s_\theta^2}{a^2ns_\theta^2 + \exp(\mu_\lambda)} \\
& \mu_\lambda^* = W\left( n\frac{s_\lambda^2}{2} \left( \frac{\sum_{i=1}^{n} (y_i - a \mu_\theta^*)^2}{n}+a^2  \sigma_\theta^*\right) \exp\left( n \frac{s_\lambda^2}{2} - m_\lambda \right) \right) -  ( n \frac{s_\lambda^2}{2} - m_\lambda)\\
& \sigma_\lambda^* =  \frac{2 \exp(\mu_\lambda^*)s_\lambda^2}{\sum_{i=1}^{n}(y_i -a\theta)^2 s_\lambda^2 + 2 \exp(\mu_\lambda^*)}
\end{aligned}
\end{equation}
\subsubsection{Asymptotic results}
\myparagraph{Preliminary lemma}
To proof convergence in probability in the linear model, the following lemma is used repeatedly.
\begin{lemma}\label{fraclemma}
Suppose we have sequences of random variables $x^{(n)},y^{(n)},z^{(n)}$ such that  $x^{(n)} = o_\mathbb{P}(1)$,  $y^{(n)} \geq 0, z^{(n)} \geq 0$ and positive constants $c,d \in \mathbb{R}_{+}$. Then we have
$$\frac{x^{(n)}}{c(d + z^{(n)}) + y^{(n)}} = o_\mathbb{P}(1).$$
\end{lemma}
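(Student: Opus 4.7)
The plan is to exploit the fact that the denominator is bounded below by a strictly positive deterministic constant, so that the whole fraction inherits its convergence in probability from the numerator alone. Concretely, since $y^{(n)} \geq 0$ and $z^{(n)} \geq 0$ pointwise (on $\Omega$), while $c,d>0$ are strictly positive constants, we have the deterministic lower bound
\begin{equation}
c(d + z^{(n)}) + y^{(n)} \;\geq\; cd \;>\; 0
\end{equation}
on every sample path. Therefore the fraction is well-defined and its absolute value is almost surely bounded by $|x^{(n)}|/(cd)$.

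From there I would just unwind the definition of $o_\mathbb{P}(1)$. Fix $\varepsilon>0$. Using the bound above,
\begin{equation}
\mathbb{P}\!\left[\,\left|\tfrac{x^{(n)}}{c(d+z^{(n)})+y^{(n)}}\right|>\varepsilon\,\right]
\;\leq\; \mathbb{P}\!\left[\,\tfrac{|x^{(n)}|}{cd} > \varepsilon\,\right]
\;=\; \mathbb{P}\!\left[\,|x^{(n)}| > cd\,\varepsilon\,\right].
\end{equation}
Since $cd\,\varepsilon>0$ and $x^{(n)} \overset{\mathbb{P}}{\longrightarrow} 0$ by assumption, the right-hand side tends to $0$ as $n\to\infty$. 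As $\varepsilon$ was arbitrary, this establishes $\frac{x^{(n)}}{c(d+z^{(n)})+y^{(n)}} \overset{\mathbb{P}}{\longrightarrow} 0$, which is the claim.

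There is essentially no obstacle here: the only substantive step is recognizing that the positivity of $c$ and $d$ and the nonnegativity of $y^{(n)}$, $z^{(n)}$ bound the denominator away from zero uniformly in $\omega$ and $n$, which turns the quotient into a deterministic contraction of $|x^{(n)}|$. Worth noting is that neither measurability nor any joint distributional assumption on $(x^{(n)}, y^{(n)}, z^{(n)})$ enters the argument, so the lemma applies in all the later invocations where the denominators in the variational Laplace update equations \eqref{updatelinear} involve terms of exactly this $c(d+z^{(n)})+y^{(n)}$ shape (with $z^{(n)}$ and $y^{(n)}$ being empirical averages that are nonnegative by construction).
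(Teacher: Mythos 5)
Your proof is correct and follows essentially the same route as the paper's: both bound the denominator below by $cd>0$ using the nonnegativity of $y^{(n)},z^{(n)}$, so the fraction is dominated by $|x^{(n)}|/(cd)$, which tends to zero in probability. The only cosmetic difference is that you unwind the definition of $o_{\mathbb{P}}(1)$ explicitly, whereas the paper invokes the continuous mapping theorem for the final step.
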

To see this, we note that 
\begin{equation}
\begin{aligned}
\left|\frac{x^{(n)}}{c(d + z^{(n)}) + y^{(n)}}\right|   = \frac{\left|x^{(n)}\right|}{\left|c(d + z^{(n)}) + y^{(n)}\right|}  = \frac{\left|x^{(n)}\right|}{c(d + z^{(n)}) + y^{(n)}} \leq \frac{\left|x^{(n)}\right|}{cd} \overset{\mathbb{P}}{\longrightarrow} 0,
\end{aligned}
\end{equation}
using the nonnegativity of the sequences $y^{(n)},z^{(n)}$ and the continuous mapping theorem.

\myparagraph{Consistency - proof of \fullref{prop3}}
Recall that we  are studying fixed points, that is our variational estimates are values  $ \mu_\theta^{(n)},\sigma_\theta^{(n)},\mu_\lambda^{(n)},\sigma_\lambda^{(n)}$, such that $F(\mu_\theta^{(n)},\sigma_\theta^{(n)},\mu_\lambda^{(n)},\sigma_\lambda^{(n)}) = (\mu_\theta^{(n)},\sigma_\theta^{(n)},\mu_\lambda^{(n)},\sigma_\lambda^{(n)})$. 
Plugging this into the update equations yields
\begin{equation}
\begin{aligned}
&\mu_\theta^{(n)} =  \frac{   a (1+ \frac{\sigma_\lambda^{(n)}}{2})  \frac{\sum_{i=1}^{n} y_i}{n}  + \frac{\exp(\mu_\lambda^{(n)})m_\theta}{n s_\theta^2}}{ a^2   (1+ \frac{\sigma_\lambda^{(n)}}{2}) +\frac{\exp(\mu_\lambda^{(n)})}{n s_\theta^2}}\\
&\sigma_\theta^{(n)} =  \frac{\exp(\mu_\lambda^{(n)})s_\theta^2}{a^2ns_\theta^2 + \exp(\mu_\lambda^{(n)})} \\
& \mu_\lambda^{(n)} = W\left( n\frac{s_\lambda^2}{2} \left( \frac{\sum_{i=1}^{n} (y_i - a \mu_\theta^{(n)})^2}{n}+a^2  \sigma_\theta^{(n)}\right) \exp\left( n \frac{s_\lambda^2}{2} - m_\lambda \right) \right) -  ( n \frac{s_\lambda^2}{2} - m_\lambda)\\
& \sigma_\lambda^{(n)} = 2 \frac{\exp(\mu_\lambda^{(n)})s_\lambda^2}{\sum_{i=1}^{n}(y_i -a\mu_\theta^{(n)})^2 s_\lambda^2 + 2 \exp(\mu_\lambda^{(n)})}.
\end{aligned}
\end{equation}
\mbox{}\\
Let us for now assume that  $\mu_\lambda^{(n)} = \mathcal{O}_{\mathbb{P}}(1)$, which will facilitate the following derivations. We will show that this assumption is indeed true at the end of the proof. By this property, we have
\begin{equation}\label{muthetaderivation}
\begin{aligned}
\mu_\theta^{(n)} &=  \frac{   a (1+ \frac{\sigma_\lambda^{(n)}}{2})  \frac{\sum_{i=1}^{n} y_i}{n}  + \frac{\exp(\mu_\lambda^{(n)})m_\theta}{n s_\theta^2}}{ a^2   (1+ \frac{\sigma_\lambda^{(n)}}{2}) +\frac{\exp(\mu_\lambda^{(n)})}{n s_\theta^2}} \\
			&= \frac{   a (1+ \frac{\sigma_\lambda^{(n)}}{2})  \frac{\sum_{i=1}^{n} y_i}{n} }{ a^2   (1+ \frac{\sigma_\lambda^{(n)}}{2}) +\frac{\exp(\mu_\lambda^{(n)})}{n s_\theta^2}}     +       
    \frac{ \frac{\exp(\mu_\lambda^{(n)})m_\theta}{n s_\theta^2}}{a^2   (1+ \frac{\sigma_\lambda^{(n)}}{2}) +\frac{\exp(\mu_\lambda^{(n)})}{n s_\theta^2}} \\
& = \frac{     \frac{\sum_i y_i}{n} }{ a (1   +\frac{\exp(\mu_\lambda^{(n)})}{n s_\theta^2  a^2(1+ \frac{\sigma_\lambda^{(n)}}{2})})} + \frac{ \frac{\exp(\mu_\lambda^{(n)})m_\theta}{n s_\theta^2}}{a^2   (1+ \frac{\sigma_\lambda^{(n)}}{2}) +\frac{\exp(\mu_\lambda^{(n)})}{n s_\theta^2}}  \overset{\mathbb{P}}{\longrightarrow}  \theta_0 .
\end{aligned}
\end{equation}
This can be seen as follows: Since $\exp(\mu_\lambda^{(n)}) = O_{\mathbb{P}}(1) \implies \frac{\exp(\mu_\lambda^{(n)})}{n} = o_{\mathbb{P}}(1)$ and $\frac{\sigma_\lambda^{(n)}}{2} \geq 0$, the second term converges in probability to $0$ by  \fullref{fraclemma}. For the first term, we use that $\frac{\exp(\mu_\lambda^{(n)})}{n s_\theta^2  a^2(1+ \frac{\sigma_\lambda^{(n)}}{2})} \overset{\mathbb{P}}{\longrightarrow} 0$ by  \fullref{fraclemma},
$\frac{\sum_i y_i}{n} \overset{\mathbb{P}}{\longrightarrow}  a\theta_0$ by the law of large numbers, and then apply  Slutsky's Lemma.
Furthermore, again by Slutsky's Lemma we have 
\begin{equation}
\sigma_\theta^{(n)} =  \frac{\frac{\exp(\mu_\lambda^{(n)})s_\theta^2}{n}}{a^2s_\theta^2 + \frac{\exp(\mu_\lambda^{(n)})}{n}}  \overset{\mathbb{P}}{\longrightarrow}  0
\end{equation}
For $\mu_\lambda^{(n)}$ we now can use that for $x \to \infty$, $W(x) = \ln x - \ln (\ln x)  + o(1)$ \citep{corless1996lambertw}.
By our previous limit considerations, we can rewrite $ \frac{\sum_{i=1}^{n} (y_i - a \mu_\theta^{(n)})^2}{n}+a^2  \sigma_\theta^{(n)} = \exp(\lambda_0) + o_{\mathbb{P}}(1)$.
This is because 
\begin{equation}
\begin{aligned}
&\frac{\sum_{i=1}^{n} (y_i - a \mu_\theta^{(n)})^2}{n} \\&= \frac{\sum_{i=1}^{n}y_i^2}{n} - 2a\mu_\theta^{(n)}\frac{\sum_{i=1}^{n}y_i}{n} + (a \mu_\theta^{(n)})^2 \\& \overset{\mathbb{P}}{\longrightarrow} a^2\theta_0^2 +\exp(\lambda_0)  - 2a^2\theta_0^2 + a^2\theta_0^2\\ &= \exp(\lambda_0)  
\end{aligned}
\end{equation}
by the law of large numbers and Slutsky's Lemma.

Hence, we see that the argument of Lambert's $W$ function in our expression for $\mu_\lambda^{(n)}$ will be arbitrarily big with arbitrarily high probability, such that the limit expression above is justified. We thus write 
\begin{equation}\label{mulambdaderivation}
\begin{aligned}
 \mu_\lambda^{(n)} &=    W\left( n\frac{s_\lambda^2}{2} \left(   \exp(\lambda_0) +  o_{\mathbb{P}}(1)  \right) \exp\left( n \frac{s_\lambda^2}{2} - m_\lambda \right) \right) -  ( n \frac{s_\lambda^2}{2} - m_\lambda)  \\
& = \ln  \left( n\frac{s_\lambda^2}{2} \left(   \exp(\lambda_0) +  o_{\mathbb{P}}(1)  \right) \exp\left( n \frac{s_\lambda^2}{2}  - m_\lambda \right) \right)  \\
 &- \ln \ln \left( n\frac{s_\lambda^2}{2} \left(   \exp(\lambda_0) +  o_{\mathbb{P}}(1)  \right) \exp\left( n \frac{s_\lambda^2}{2} - m_\lambda \right) \right) - ( n \frac{s_\lambda^2}{2} - m_\lambda) + o_\mathbb{P}(1) \\
& =     \ln   \frac{n\frac{s_\lambda^2}{2}    \exp(\lambda_0) +  o_{\mathbb{P}}(1)}{\ln \left( n\frac{s_\lambda^2}{2} \left(   \exp(\lambda_0) +  o_{\mathbb{P}}(1)  \right) \exp\left( n \frac{s_\lambda^2}{2} - m_\lambda \right) \right)} + o_\mathbb{P}(1) \\
& =  \ln   \frac{n\frac{s_\lambda^2}{2}    \exp(\lambda_0) +  o_{\mathbb{P}}(1)}{ n\frac{s_\lambda^2}{2} -   m_\lambda     +      \ln   n\frac{s_\lambda^2}{2}    \left(\exp(\lambda_0) +  o_{\mathbb{P}}(1)\right)   } + o_\mathbb{P}(1) \\
& =  \ln   \frac{  \exp(\lambda_0) +  o_{\mathbb{P}}(1)}{ 1 -  \frac {m_\lambda}{n\frac{s_\lambda^2}{2}}     +      \frac{\ln   n\frac{s_\lambda^2}{2} \left(   \exp(\lambda_0) +  o_{\mathbb{P}}(1)  \right) }{n\frac{s_\lambda^2}{2}}} + o_\mathbb{P}(1)
 \overset{\mathbb{P}}{\longrightarrow}  \ln \frac{\exp(\lambda_0)}{1} = \lambda_0,
\end{aligned} 
\end{equation}
where we used that $$ \frac {m_\lambda}{n\frac{s_\lambda^2}{2}}  +   \frac{\ln   n\frac{s_\lambda^2}{2} \left(   \exp(\lambda_0) +  o_{\mathbb{P}}(1)  \right) }{n\frac{s_\lambda^2}{2}}  \overset{\mathbb{P}}{\longrightarrow}0$$ since
$\underset{n \to \infty}{\lim} \frac{\ln n}{n} = 0$ and then applied Slutsky's Lemma to the fraction inside the logarithm. The continuity of the logarithm then allows the conclusion by the continuous mapping theorem.
Finally, we have that 
\begin{equation}
\begin{aligned}
\sigma_\lambda^{(n)} &= 2 \frac{\exp(\mu_\lambda^{(n)})s_\lambda^2}{\sum_{i=1}^{n}(y_i -a\mu_\theta^{(n)})^2 s_\lambda^2 + 2 \exp(\mu_\lambda^{(n)})} \\
&= 2 \frac{\frac{\exp(\mu_\lambda^{(n)})s_\lambda^2}{n}}{\frac{\sum_{i=1}^{n}(y_i -a\mu_\theta^{(n)})^2 s_\lambda^2}{n} +  \frac{2 \exp(\mu_\lambda^{(n)}}{n}}    \overset{\mathbb{P}}{\longrightarrow} 0
\end{aligned}
\end{equation}
again by Slutsky's Lemma.

\bigbreak
Let us now conclude the proof by showing the claim that $\mu_\lambda^{(n)} = \mathcal{O}_{\mathbb{P}}(1)$. 
We first observe that (using the same decomposition as in \eqref{muthetaderivation}) 
\begin{equation}
\begin{aligned}
\left\vert\mu_\theta^{(n)}\right\vert &= \left\vert \frac{     \frac{\sum_{i=1}^{n} y_i}{n} }{ a (1   +\frac{\exp(\mu_\lambda^{(n)})}{n s_\theta^2  a^2(1+ \frac{\sigma_\lambda^{(n)}}{2})})} + \frac{ \frac{\exp(\mu_\lambda^{(n)})m_\theta}{n s_\theta^2}}{a^2   (1+ \frac{\sigma_\lambda^{(n)}}{2}) +\frac{\exp(\mu_\lambda^{(n)})}{n s_\theta^2}}\right\vert\\
&\leq \left \vert  \frac{     \frac{\sum_{i=1}^{n} y_i}{n} }{ a (1   +\frac{\exp(\mu_\lambda^{(n)})}{n s_\theta^2  a^2(1+ \frac{\sigma_\lambda^{(n)}}{2})})} \right \vert + \left \vert  \frac{ \frac{\exp(\mu_\lambda^{(n)})m_\theta}{n s_\theta^2}}{a^2   (1+ \frac{\sigma_\lambda^{(n)}}{2}) +\frac{\exp(\mu_\lambda^{(n)})}{n s_\theta^2}}\right\vert\\
&\leq \left \vert \frac{ \frac{\sum_{i=1}^{n} y_i}{n}}{a} \right  \vert + \left \vert m_\theta \right \vert  \overset{\mathbb{P}}{\longrightarrow} \left\vert\theta_0\right\vert + \left\vert m_\theta\right\vert,
\end{aligned}
\end{equation}
which means that $\mu_\theta^{(n)}$ is bounded in probability, $\mu_\theta^{(n)}= \mathcal{O}_{\mathbb{P}}(1)$ . Note that we are using only the positivity of $\sigma_\lambda^{(n)}$  and  $\exp(\mu_\lambda^{(n)})$ without constraining $\mu_\lambda^{(n)}$.
We also observe that 
 \begin{equation}
\begin{aligned}
0 < \sigma_\theta^{(n)} =  \frac{\frac{\exp(\mu_\lambda^{(n)})s_\theta^2}{n}}{a^2s_\theta^2 + \frac{\exp(\mu_\lambda^{(n)})}{n}} \leq s_\theta^2\\
\end{aligned}
\end{equation}
Using the law of large numbers we have that
\begin{equation}
\begin{aligned}
&\frac{\sum_{i=1}^{n} (y_i - a \mu_\theta^{(n)})^2}{n} \\
&=\frac{\sum_{i=1}^{n} y_i^2}{n} - 2 a \mu_\theta^{(n)} \frac{\sum_{i=1}^{n} y_i}{n} + (a \mu_\theta^{(n)})^2\\
&= \exp(\lambda_0) + (a(\theta_0-\mu_\theta^{(n)}))^2  + o_{\mathbb{P}}(1).
\end{aligned}
\end{equation}
By our previous consideration, the term $ (a(\theta_0-\mu_\theta^{(n)}))^2 = \mathcal{O}_{\mathbb{P}}(1)$, and in particular this term is nonnegative.
These observations allow us to rewrite  
\begin{equation}
 \frac{\sum_{i=1}^{n} (y_i - a \mu_\theta^{(n)})^2}{n}+a^2  \sigma_\theta^{(n)} = \exp(\lambda_0) + r^{(n)} +  o_{\mathbb{P}}(1)
\end{equation}
with  $r^{(n)} = \mathcal{O}_\mathbb{P}(1)$ and $r^{(n)} >0$. In particular, this means that for any $\varepsilon >0$ we can find constants $0<m_\varepsilon \leq M_\varepsilon$ and $N_\varepsilon$ 
such that
\begin{equation}\label{bounds}
\exp(\lambda_0) + r^{(n)} +  o_{\mathbb{P}}(1) \ \in  \ \left[m_\varepsilon,M_\varepsilon \right]
\end{equation}
with probability greater that $1-\varepsilon$. 
In particular, we see that the argument of $W$  in the defintion of $\mu_\lambda^{(n)}$ can be written as 
\begin{equation}
n\frac{s_\lambda^2}{2} \left(\exp(\lambda_0) + r^{(n)} +  o_{\mathbb{P}}(1)\right) \exp\left( n \frac{s_\lambda^2}{2} - m_\lambda \right) 
\end{equation}
which will be arbitrarily big with arbitrarily high probability, such that the same limit expression as in \eqref{mulambdaderivation} can be employed to yield 
\begin{equation}\label{limit}
\mu_\lambda^{(n)} = \ln   \frac{  \exp(\lambda_0) +  r^{(n)}+  o_{\mathbb{P}}(1)}{ 1 -  \frac {m_\lambda}{n\frac{s_\lambda^2}{2}}     +      \frac{\ln   n\frac{s_\lambda^2}{2} \left(   \exp(\lambda_0) + r^{(n)}+  o_{\mathbb{P}}(1)  \right) }{n\frac{s_\lambda^2}{2}}} + o_\mathbb{P}(1).
\end{equation}
Here, we note that 
\begin{equation}\label{expr}
\frac{\ln   n\frac{s_\lambda^2}{2} \left(   \exp(\lambda_0) + r^{(n)}+  o_{\mathbb{P}}(1)  \right) }{n\frac{s_\lambda^2}{2}} \overset{\mathbb{P}}{\longrightarrow} 0.
\end{equation}
This is because using the monotonicity of the logarithm, one can upper and lower bound the expression by plugging in the positive bounds in \eqref{bounds}, where both will yield convergence to $0$. 
Thus, one has that in equation \eqref{limit},
\begin{equation}
1 -  \frac {m_\lambda}{n\frac{s_\lambda^2}{2}}     +      \frac{\ln   n\frac{s_\lambda^2}{2} \left(   \exp(\lambda_0) + r^{(n)}+  o_{\mathbb{P}}(1)  \right) }{n\frac{s_\lambda^2}{2}} \overset{\mathbb{P}}{\longrightarrow} 1
\end{equation}
Using \eqref{bounds} again for the enumerator, we thus have that 
\begin{equation}
\frac{  \exp(\lambda_0) +  r^{(n)}+  o_{\mathbb{P}}(1)}{ 1 -  \frac {m_\lambda}{n\frac{s_\lambda^2}{2}}     +      \frac{\ln   n\frac{s_\lambda^2}{2} \left(   \exp(\lambda_0) + r^{(n)}+  o_{\mathbb{P}}(1)  \right) }{n\frac{s_\lambda^2}{2}}}
\end{equation}
is bounded in probability and in particular positive with arbitrarily high probability. 
These considerations show that the whole expression in equation \eqref{limit} is bounded in probability, that is
\begin{equation}
\mu_\lambda^{(n)} = \mathcal{O}_{\mathbb{P}}(1)
\end{equation}
as claimed, which concludes the proof.

Our proof of consistency was rather explicit by using the analytic expressions for the update equations. As an alternative, one could also employ a more implicit proof by using \autoref{Z-estimator} after establishing boundedness in probability of the variational means.

\myparagraph{Distributional limits - proof of \autoref{prop:asymptoticefficiency}}
Let us now consider the distributional limits.  Here, first observe that we have by assumption 
\begin{align}
0 = \frac{\partial}{\partial \theta}\left( \ln p(y,\theta,\lambda) - \frac{1}{2} \frac{H_{L_y}^\lambda(\theta,\lambda)}{H_{L_y}^\lambda(\mu_\theta^{(n)},\mu_\lambda^{(n)})} \right) \bigg \rvert_{(\mu_\theta^{(n)},\mu_\lambda^{(n)}} \\
0 = \frac{\partial}{\partial \lambda}\left( \ln p(y,\theta,\lambda) - \frac{1}{2} \frac{H_{L_y}^\theta(\theta,\lambda)}{H_{L_y}^\theta(\mu_\theta^{(n)},\mu_\lambda^{(n)})} \right) \bigg \rvert_{(\mu_\theta^{(n)},\mu_\lambda^{(n)}} \\
\end{align}
since $\mu_\lambda^{(n)},\mu_\theta^{(n)}$ are maximizers of \eqref{Ilambda},\eqref{Itheta} and by plugging in the identities for the covariance parameters $\sigma_\lambda^{(n)} = H_{L_y}^\theta(\mu_\theta^{(n)},\mu_\lambda^{(n)})$,  $\sigma_\theta^{(n)} = H_{L_y}^\lambda(\mu_\theta^{(n)},\mu_\lambda^{(n)})$
 Thus, by multiplying both sides by $\frac{1}{n}$ and summarizing the above in a vector valued equation we have 
\begin{equation}
0 = \Psi^{(n)}\left(\mu_\theta^{(n)},\mu_\lambda^{(n)}\right)  + R^{(n)}\left( \mu_\theta^{(n)},\mu_\lambda^{(n)}\right)
\end{equation}
where 
\begin{equation}
\Psi^{(n)}\left(\mu_\theta,\mu_\lambda \right) = \frac{1}{n} \begin{pmatrix} \sum \limits_{i} \frac{\partial}{\partial \lambda}\ln p(y_i|\theta,\lambda) \bigg \rvert_{(\mu_\theta,\mu_\lambda)} \\ \sum \limits_{i} \frac{\partial}{\partial \theta}\ln p(y_i|\theta,\lambda) \bigg \rvert_{(\mu_\theta,\mu_\lambda)}  \end{pmatrix}
\end{equation}
and 
\begin{equation}
 R^{(n)}\left( \mu_\theta,\mu_\lambda \right) = \frac{1}{n}
 \begin{pmatrix}\frac{\partial}{\partial \theta}\left(  \ln p(\theta) + \frac{1}{2} \frac{H_{L_y}^\lambda(\theta,\lambda)}{H_{L_y}^\lambda(\mu_\theta^{(n)},\mu_\lambda^{(n)})} \right) \bigg \rvert_{(\mu_\theta,\mu_\lambda)} \\ \frac{\partial}{\partial \lambda}\left(  \ln p(\lambda) + \frac{1}{2} \frac{H_{L_y}^\theta(\theta,\lambda)}{H_{L_y}^\theta(\mu_\theta^{(n)},\mu_\lambda^{(n)})} \right) \bigg \rvert_{(\mu_\theta,\mu_\lambda)} \end{pmatrix}.
\end{equation}
Now, the first part, $\Psi^{(n)}\left(\mu_\theta^{(n)},\mu_\lambda^{(n)}\right) $ is the sample mean of the derivative of the log likelihood, i.e. the score, in a regular model, evaluated at the variational expectation parameters.
One thus would expect that the zeros of the above equations, if sufficiently close to zeros of the mean score, behave asymptotically equivalent. 
Theorem 5.21 in \citet{van2000asymptotic} now can be used to make 'sufficiently close' precise: Since the log-likelihood is twice continuously differentiable, we can get a local Lipschitz-condition and hence for this theorem to hold it will be enough to show that $\sqrt{n} R^{(n)}\left(\mu_\theta^{(n)},\mu_\lambda^{(n)}\right) = o_\mathbb{P}(1)$, compare also \eqref{varlapnearzero}.
We first observe that 
\begin{equation}
\frac{\sqrt{n} }{n}\frac{\partial}{\partial \lambda} \ln p(\lambda)\rvert_{(\mu_\theta^{(n)},\mu_\lambda^{(n)}} = \frac{\mu_\lambda^{(n)} - m_\lambda}{s_\lambda \sqrt{n}} = o_\mathbb{P}(1)
\end{equation}
and 
\begin{equation}
\frac{\sqrt{n} }{n}\frac{\partial}{\partial \theta} \ln p(\theta)\rvert_{(\mu_\theta^{(n)},\mu_\lambda^{(n)}} = \frac{\mu_\theta^{(n)} - m_\theta}{s_\theta \sqrt{n}} = o_\mathbb{P}(1)
\end{equation}
by Slutsky's Lemma. For the terms depending on the second derivatives we get
\begin{equation}
\begin{aligned}
&\frac{\sqrt{n} }{n}\frac{\partial}{\partial \theta}\left(\frac{H_{L_y}^\lambda(\theta,\lambda)}{H_{L_y}^\theta(\mu_\theta^{(n)},\mu_\lambda^{(n)})} \right) \bigg \rvert_{(\mu_\theta^{(n)},\mu_\lambda^{(n)}} \\
&= \frac{1}{\sqrt{n}}  \frac{( a \sum_{i=1}^{n}(y_i -a\mu_\theta^{(n)}) s_\lambda^2 }{\sum_{i=1}^{n}(y_i -a\mu_\theta^{(n)})^2 s_\lambda^2 + 2 \exp(\mu_\lambda^{(n)})}\\ 
&= \frac{1}{\sqrt{n}}  \frac{( \frac{a \sum_{i=1}^{n}(y_i -a\mu_\theta^{(n)})}{n} s_\lambda^2 }{\frac{\sum_{i=1}^{n}(y_i -a\mu_\theta^{(n)})^2}{n} s_\lambda^2 + 2 \frac{\exp(\mu_\lambda^{(n)})}{n} } \\
&= o_\mathbb{P}(1),
\end{aligned}
\end{equation}
since $\frac{a \sum_{i=1}^{n}(y_i -a\mu_\theta^{(n)})}{n} \overset{\mathbb{P}}{\longrightarrow} 0$, $\frac{\sum_{i=1}^{n}(y_i -a\mu_\theta^{(n)})^2}{n}\overset{\mathbb{P}}{\longrightarrow} \exp(\lambda_0)$ and  $2 \frac{\exp(\mu_\lambda^{(n)})}{n}   \overset{\mathbb{P}}{\longrightarrow} 0$ and then applying Slutsky's Lemma.
We also have
\begin{equation}
\begin{aligned}
&\frac{\sqrt{n} }{n}\frac{\partial}{\partial \lambda}\left(\frac{H_{L_y}^\theta(\theta,\lambda)}{H_{L_y}^\theta(\mu_\theta^{(n)},\mu_\lambda^{(n)})} \right) \bigg \rvert_{(\mu_\theta^{(n)},\mu_\lambda^{(n)}} \\
&= \frac{\sqrt{n}}{n} \frac{a^2 n}{\exp(\mu_\lambda^{(n)})} \frac{\exp(\mu_\lambda^{(n)})s_ \theta^2}{a^2ns_\theta^2 + \exp(\mu_\lambda^{(n)})}\\
&=\frac{1}{\sqrt{n} + \frac{\exp(\mu_\lambda^{(n)})}{a^2 \sqrt{n} s_\theta^2}} = o_\mathbb{P}(1).
\end{aligned}
\end{equation}
Thus, we can make use of the aforementioned theorem: we get that the limit distribution will be the same as for a true zero of the score -  that is, a maximum likelihood estimator. This estimator we know to be asymptotically efficient. We thus have
\begin{equation}
\sqrt{n} \left( \begin{pmatrix} \mu_\theta^{(n)} \\ \mu_\lambda^{(n)} \end{pmatrix} - \begin{pmatrix} \theta_0 \\ \lambda_0 \end{pmatrix} \right)  \overset{d}{\longrightarrow} G \sim \mathcal{N}\left(0,\mathcal{I}(\theta_0,\lambda_0)^{-1} \right)
\end{equation}
as claimed.

The limits of the rescaled variational variances are easy to see: we get that
\begin{equation}
n\sigma_\theta^{(n)} =  \frac{\exp(\mu_\lambda^{(n)})s_\theta^2}{a^2s_\theta^2 + \frac{\exp(\mu_\lambda^{(n)})}{n}} \overset{\mathbb{P}}{\longrightarrow} \frac{\exp(\lambda_0)}{a^2}
\end{equation}
and
\begin{equation}\label{sigmaconv}
 n\sigma_\lambda^{(n)} = 2 \frac{\exp(\mu_\lambda^{(n)})s_\lambda^2}{ \frac{\sum_{i=1}^{n}(y_i -a\mu_\theta^{(n)})^2}{n} s_\lambda^2 + 2 \frac{\exp(\mu_\lambda^{(n)})}{n}} \overset{\mathbb{P}}{\longrightarrow} 2 \frac{\exp{\lambda_0}}{\exp{\lambda_0}} = 2
\end{equation}
by using Slutsky's Lemma in both cases.
The Fisher information matrix in this case can be easily expressed as 
\begin{equation}
\begin{aligned}
\mathcal{I}(\theta_0,\lambda_0)  &= \mathbb{E}_{p(y|\theta_0,\lambda_0)}\left[ -\begin{pmatrix} \frac{\partial^2}{\partial^2 \theta} {\ln p(y|\theta,\lambda)} \bigg \rvert_{\theta,\lambda = \theta_0,\lambda_0} & \frac{\partial^2}{\partial \theta \partial \lambda} {\ln p(y|\theta,\lambda)} \bigg \rvert_{\theta,\lambda = \theta_0,\lambda_0} \\ \frac{\partial^2}{\partial \theta \partial \lambda} {\ln p(y|\theta,\lambda)} \bigg \rvert_{\theta,\lambda = \theta_0,\lambda_0} & \frac{\partial^2}{\partial^2 \lambda} {\ln p(y|\theta,\lambda)} \bigg \rvert_{\theta,\lambda = \theta_0,\lambda_0} \end{pmatrix} \right] \\
&= \mathbb{E}_{p(y|\theta_0,\lambda_0)}\left[ -\begin{pmatrix} -\frac{a^2}{\exp(\lambda_0)} &   \frac{2(y-a\theta_0)a}{\exp(\lambda_0)} \\   \frac{2(y-a\theta_0)a}{\exp(\lambda_0)}  & - \frac{1}{2}\frac{(y-a\theta_0)^2}{\exp(\lambda_0)} \end{pmatrix} \right]\\
&= \begin{pmatrix} \frac{a^2}{\exp(\lambda_0)} &   0 \\   0  &  \frac{1}{2} \end{pmatrix}.\\
\end{aligned}
\end{equation}
\myparagraph{Convergence in total variation - proof of \fullref{variationconvergence}}
Given the asymptotic efficiency of the variational means and the convergence in \eqref{sigmaconv} it is straightforward to see that the density obtained by variational Laplace and the optimal density in terms of  \eqref{KL_opt} converge in total variation. First note that for the optimal Gaussian in terms of \eqref{optimalGaussian} we have 
\begin{equation}\begin{aligned} g^{(n)}\begin{pmatrix}\theta \\ \lambda \end{pmatrix} &= \underset{q \in \mathcal{Q}^d}{\arg \min} \  \operatorname{D_{KL}}\left(q\begin{pmatrix}\theta \\ \lambda \end{pmatrix} \bigg\Vert \mathcal{N}\left(\begin{pmatrix}\theta \\ \lambda \end{pmatrix} ;\begin{pmatrix}\hat{\theta}^{(n)} \\ \hat{\lambda}^{(n)} \end{pmatrix} ,\frac{1}{n} \mathcal{I}\begin{pmatrix}\theta_0 \\ \lambda_0 \end{pmatrix} ^{-1}\right)\right)\\ &= \mathcal{N}\left(\begin{pmatrix}\theta \\ \lambda \end{pmatrix} ;\begin{pmatrix}\hat{\theta}^{(n)} \\ \hat{\lambda}^{(n)} \end{pmatrix} ,\frac{1}{n} \mathcal{I}\begin{pmatrix}\theta_0 \\ \lambda_0 \end{pmatrix} ^{-1}\right), \end{aligned}\end{equation} 
since this Gaussian is in the variational family, such that also 
\begin{equation}
\widetilde{g}_h^{(n)} =\mathcal{N}\left(h; \begin{pmatrix} \sqrt{n}\left(\hat{\theta}^{(n)}- \theta_0\right) \\ \sqrt{n}\left(\hat{\lambda}^{(n)}-\lambda_0\right) \end{pmatrix}, \mathcal{I}\begin{pmatrix}\theta_0 \\ \lambda_0 \end{pmatrix} ^{-1}\right).\end{equation}
By the triangle inequality one then has
\begin{equation}
\left \Vert \widetilde{q}_{\widetilde{\mu}^{(n)},\widetilde{\Sigma}^{(n)}}(h) - \widetilde{q}^{*(n)}_h(h)  \right \Vert_{TV} \leq \left \Vert \widetilde{q}_{\widetilde{\mu}^{(n)},\widetilde{\Sigma}^{(n)}}(h)  -\widetilde{g}_h^{(n)} \right \Vert_{TV} + \left \Vert \widetilde{q}^{*(n)}_h(h) - \widetilde{g}_h^{(n)}  \right \Vert_{TV} \overset{\mathbb{P}}{\longrightarrow} 0,
\end{equation}
where this follows since the first term in the sum converges to $0$ in probability by \eqref{variationalbernstein} and the second converges to $0$ in probability by \autoref{totalvariationconvergencelemma} below.
The same line of reasoning gives that both converge in total variation to the posterior by using the Bernstein-von-Mises theorem.

\begin{lemma}\label{totalvariationconvergencelemma}
Assume the maps $ \theta \mapsto  \sqrt{p(y|\theta)}$ are continuously differentiable for all $y$. Furthermore, assume $\theta \mapsto \mathcal{I}(\theta)$ exists; is continuous in $\theta$ and $\mathcal{I}(\theta_0)$ is invertible.    
Let  $\mu^{(n)}$,$\hat{\theta}^{(n)}$ be two asymptotically efficient estimators. Let $\widetilde{\Sigma}$ be a symmetric, positive definite matrix and $\widetilde{\Sigma}^{(n)}$ be a sequence of symmetric, positive definite, random matrizes, such that $\widetilde{\Sigma}^{(n)} \overset{\mathbb{P}}{\longrightarrow} \Sigma$.
Then $$ \left \Vert \mathcal{N}\left(\cdot;\sqrt{n}\left(\mu^{(n)}-\theta_0\right),\widetilde{\Sigma}^{(n)}\right) - \mathcal{N}\left(\cdot;\sqrt{n}\left(\hat{\theta}^{(n)}-\theta_0\right),\widetilde{\Sigma}\right) \right \Vert_{TV} \overset{\mathbb{P}}{\longrightarrow} 0 $$.
\end{lemma}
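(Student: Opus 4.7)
The plan is to upper bound the total variation distance by Pinsker's inequality and then use the closed-form Kullback-Leibler divergence between two Gaussians to reduce the claim to three component-wise convergences in probability. Writing $m_1^{(n)} := \sqrt{n}(\mu^{(n)} - \theta_0)$ and $m_2^{(n)} := \sqrt{n}(\hat{\theta}^{(n)} - \theta_0)$, Pinsker combined with the explicit formula for the Gaussian KL divergence gives
$$
\left\Vert \mathcal{N}\bigl(\cdot; m_1^{(n)}, \widetilde{\Sigma}^{(n)}\bigr) - \mathcal{N}\bigl(\cdot; m_2^{(n)}, \widetilde{\Sigma}\bigr)\right\Vert_{TV}^{2} \leq \tfrac{1}{4}\Bigl[\operatorname{tr}\bigl(\widetilde{\Sigma}^{-1} \widetilde{\Sigma}^{(n)}\bigr) - d + \ln \det \widetilde{\Sigma} - \ln \det \widetilde{\Sigma}^{(n)} + \bigl(m_2^{(n)}-m_1^{(n)}\bigr)^T \widetilde{\Sigma}^{-1}\bigl(m_2^{(n)}-m_1^{(n)}\bigr)\Bigr],
$$
so it suffices to show each of the three bracketed contributions converges to zero in probability.

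The first two terms are immediate: since $\widetilde{\Sigma}^{(n)} \overset{\mathbb{P}}{\longrightarrow} \widetilde{\Sigma}$ and the maps $\Sigma \mapsto \operatorname{tr}(\widetilde{\Sigma}^{-1}\Sigma)$ and $\Sigma \mapsto \ln \det \Sigma$ are continuous on a neighborhood of the positive definite matrix $\widetilde{\Sigma}$, the continuous mapping theorem yields $\operatorname{tr}(\widetilde{\Sigma}^{-1}\widetilde{\Sigma}^{(n)}) - d \overset{\mathbb{P}}{\longrightarrow} 0$ and $\ln \det \widetilde{\Sigma} - \ln \det \widetilde{\Sigma}^{(n)} \overset{\mathbb{P}}{\longrightarrow} 0$.

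The main obstacle is the quadratic term, which equals $n\bigl(\mu^{(n)} - \hat{\theta}^{(n)}\bigr)^T \widetilde{\Sigma}^{-1}\bigl(\mu^{(n)} - \hat{\theta}^{(n)}\bigr)$ and hence reduces the lemma to proving $\sqrt{n}(\mu^{(n)} - \hat{\theta}^{(n)}) \overset{\mathbb{P}}{\longrightarrow} 0$. This is where asymptotic efficiency must be interpreted in the strong sense of asymptotic linearization rather than as mere joint convergence in distribution to the same normal law, since two sequences sharing a marginal limit in distribution need not have a difference vanishing in probability. The continuous differentiability of $\theta \mapsto \sqrt{p(y|\theta)}$ together with the continuity and invertibility of $\mathcal{I}(\theta_0)$ guarantees differentiability in quadratic mean, placing us in the framework of regular parametric models (see \cite{van2000asymptotic}, Chapters 7 and 25). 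The H\'ajek--Le Cam convolution theorem then implies that every regular asymptotically efficient estimator admits the unique representation
$$
\sqrt{n}\bigl(\hat{\theta}_n - \theta_0\bigr) = \frac{1}{\sqrt{n}} \sum_{i=1}^{n} \mathcal{I}(\theta_0)^{-1} \dot{\ell}_{\theta_0}(y_i) + o_{\mathbb{P}}(1),
$$
with the same efficient influence function $\mathcal{I}(\theta_0)^{-1} \dot{\ell}_{\theta_0}$ for any such estimator. Applying this representation to both $\mu^{(n)}$ and $\hat{\theta}^{(n)}$ and subtracting, the stochastic leading terms cancel and we obtain $\sqrt{n}(\mu^{(n)} - \hat{\theta}^{(n)}) = o_{\mathbb{P}}(1)$. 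Combining with the boundedness of the deterministic matrix $\widetilde{\Sigma}^{-1}$ and Slutsky's lemma disposes of the quadratic term, and a final application of the continuous mapping theorem to the square root in Pinsker's bound completes the proof.
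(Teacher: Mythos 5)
Your proposal is correct and follows essentially the same route as the paper: Pinsker's inequality plus the closed-form Gaussian Kullback--Leibler divergence, with the trace and log-determinant terms handled by the convergence of $\widetilde{\Sigma}^{(n)}$ and the quadratic term handled by the fact that, under differentiability in quadratic mean (guaranteed by the assumptions, cf.\ Lemma 7.6 in \cite{van2000asymptotic}), two asymptotically efficient estimator sequences satisfy $\sqrt{n}(\mu^{(n)}-\hat{\theta}^{(n)}) = o_{\mathbb{P}}(1)$. Your appeal to the efficient-influence-function linearization is exactly the content of Lemma 8.14 in \cite{van2000asymptotic} that the paper cites, so the two arguments coincide in substance.
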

\begin{proof} To prove this Lemma, we first note that our assumptions establish 'differentiability in quadratic mean' by Lemma 7.6 in \cite{van2000asymptotic}. This property in turn, using Lemma 8.14 in \cite{van2000asymptotic}, ensures  that two rescaled, centered, asymptotically efficient estimator sequences necessarily converge in probability. That is, we have
\begin{equation}\label{eff_equivalence} \sqrt{n}(\mu^{(n)} - \theta_0) - \sqrt{n}(\hat{\theta}^{(n)}-\theta_0) \overset{\mathbb{P}}{\longrightarrow} 0.\end{equation}
Furthermore, we will use Pinsker's Inequality \citep{massart2007concentration}:  For any two probability measures $P,Q$ with densities $p,q$ we have that 
\begin{equation} \left \Vert P - Q \right \Vert_{TV} \leq \sqrt{\frac{1}{2} \operatorname{D_{KL}}(p||q)}. \end{equation}
Now one can leverage the fact that the Kullback-Leibler Divergence between two Gaussians has a closed form expression \citep{murphy2012machine}. We have
\begin{equation}
\begin{aligned}
&2 \ \operatorname{D_{KL}} \left(  \mathcal{N}\left(\cdot;\sqrt{n}\left(\mu^{(n)}-\theta_0\right),\widetilde{\Sigma}^{(n)}\right) ||  \mathcal{N}\left(\cdot;\sqrt{n}\left(\hat{\theta}^{(n)}-\theta_0\right) ,\widetilde{\Sigma}\right) \right) = 
\\  & \left(\operatorname{tr} \left( \widetilde{\Sigma}^{-1}\widetilde{\Sigma}^{(n)} \right) - p \right) \\&
  + \left(\sqrt{n} (\mu^{(n)}-\theta_0) - \sqrt{n}(\hat{\theta}^{(n)}-\theta_0) \right)^T  \widetilde{\Sigma}^{-1} \left(\sqrt{n} (\mu^{(n)}-\theta_0) - \sqrt{n}(\hat{\theta}^{(n)}-\theta_0) \right)  \\&+ \ln \frac{\det \widetilde{\Sigma}}{\det \widetilde{\Sigma}^{(n)}}.
\end{aligned}
\end{equation}
Using \eqref{eff_equivalence}, the convergence of $\widetilde{\Sigma}^{(n)}$  together with the continuos mapping theorem/Slustky's lemma, we get that all three terms of the above sum converge to $0$ in probability, which concludes the proof.
\end{proof}

\subsection{General Asymptotics of Variational Laplace}

\subsubsection{Proof of  \fullref{varlaptheorem}}
To prove \fullref{varlaptheorem}, recall that we consider fixed points $ \mu^{(n)} =  \begin{pmatrix} \mu_1^{(n)},...,\mu_k^{(n)} \end{pmatrix}$, that satisfy
\begin{equation}
\mu_j^{(n)} = \underset{\theta_j}{\arg \max} \  \ { \ln p(y,\theta_j,\mu_{/j}^{(n)})} +  \frac{1}{2} \sum_{i\neq j} \operatorname{tr} \left[ H_{L_y}^{\theta_i}\left(\mu_j^{(n)},\mu_{/j}^{(n)}\right)^{-1}   H_{L_y}^{\theta_i}  \left(\theta_j,\mu_{/j}^{(n)}\right)    \right]
\end{equation}
By assumption we have that $\mu_j^{(n)}$ is a maximum on an open set. Thus, necessarily, the derivatives at this point vanish. We thus have that
\begin{equation}
0 = \frac{\partial }{\partial \theta_j} \ln p(y,\theta_j,\theta_{/j}) \bigg \rvert_{ \left(\mu_j^{(n)},\mu_{/j}^{(n)}\right)} +   \frac{1}{2} \sum_{i\neq j} \frac{\partial}{\partial \theta_j} \operatorname{tr} \left[ H_{L_y}^{\theta_i}\left(\mu_j^{(n)},\mu_{/j}^{(n)}\right)^{-1}   H_{L_y}^{\theta_i}  \left(\theta_j,\theta_{/j}\right)    \right] \bigg \rvert_{ \left(\mu_j^{(n)},\mu_{/j}^{(n)}\right)}
\end{equation}
We note that the multiplication with a scalar will not affect this value. We thus also have that
\begin{equation}
\begin{aligned}
0 = &\frac{1}{n} \frac{\partial }{\partial \theta_j} \ln p(y|\theta_j,\theta_{/j}) \bigg \rvert_{ \left(\mu_j^{(n)},\mu_{/j}^{(n)}\right)} \\
 &+\frac{1}{n}\left( \frac{\partial}{\partial \theta_j} \ln p(\theta_j,\theta_{/j}) \bigg \rvert_{ \left(\mu_j^{(n)},\mu_{/j}^{(n)}\right)} -  \frac{1}{2} \sum_{i\neq j} \frac{\partial}{\partial \theta_j} \operatorname{tr} \left[ H_{L_y}^{\theta_i}\left(\mu_j^{(n)},\mu_{/j}^{(n)}\right)^{-1}   H_{L_y}^{\theta_i}  \left(\theta_j,\mu_{/j}^{(n)}\right)    \right]\bigg \rvert_{ \left(\mu_j^{(n)},\mu_{/j}^{(n)}\right)}  \right)
\end{aligned}
\end{equation}
We see that this equation has the following structure: The first term is the derivative of the likelihood ratio. In an identifiable model, under regularity conditions, this will asymptotically be equal to the derivative of the Kullback-Leibler divergence,which  an unique zero at $\theta_0$. The second term is an remainder term depending on higher order derivatives and the prior.
Thus, if this second term is asymptotically negligible in the appropriate sense, we will get consistency. Let us formalize these notions. We will define vector-valued random functions $\Psi^{(n)}$,  where the components are given by 
\begin{equation}
\Psi^{(n)}(y,\mu_j,\mu_{/j})_j =  \frac{1}{n}  \frac{\partial }{\partial \theta_j} \ln p(y|\theta_j,\theta_{/j}) \bigg \rvert_{ \left(\mu_j,\mu_{/j}\right)}
\end{equation} 
using a more compact notation, we can write this as
\begin{equation}
\Psi^{(n)}(y,\mu) =\frac{1}{n} \frac{\partial \ln p(y|\theta)}{\partial \theta}  (\mu)
\end{equation}
where  $\mu = (\mu_1,...,\mu_k)$.
Let us furthermore define the limit-function $\Psi$ as 
\begin{equation}
\Psi(\mu) = - \frac{\partial}{\partial \theta}  \operatorname{D_{KL}}\left( p(y|\theta_0)|| p(y|\theta)\right) (\mu).
\end{equation}
By the law of large numbers and our assumptions we have 
\begin{equation}
\begin{aligned}
\Psi^{(n)}(y,\mu) \overset{\mathbb{P}}{\longrightarrow} &\int \frac{\partial \ln p(y|\theta)}{\partial \theta}(\mu) p(y|\theta_0) dy \\&= \frac{\partial \int  \ln p(y|\theta)  p(y|\theta_0) dy }{\partial \theta}(\mu)\\
 &= - \frac{\partial \operatorname{D_{KL}}(p(y|\theta_0)||p(y|\theta)) }{\partial \theta} (\mu) = \Psi(\mu),
\end{aligned}
\end{equation}
where the first line follows by exchanging the order of derivative and integration and the last line follows since addition of a constant does not change the derivative.
Also by assumption, we will have that $\Psi$ has an unique, well separated zero at $\theta_0$. 
This corresponds to the first half of our equation above and will establish the consistency of any "near zero" of $\Psi^{(n)}$. To show that $\mu^{(n)}$ is such a "near zero", we recall that we defined a vector-valued random function $R^{(n)}$, where the components are given by
\begin{equation}
R^{(n)}(y,\mu_j,\mu_{/j})_j  = \frac{1}{n}\left( -\frac{\partial}{\partial \theta_j} \ln p(\theta_j,\theta_{/j}) \bigg \rvert_{ \left(\mu_j^{(n)},\mu_{/j}^{(n)}\right)} +  \frac{1}{2} \sum_{i\neq j} \frac{\partial}{\partial \theta_j} \operatorname{tr} \left[ H_{L_y}^{\theta_i}\left(\mu_j^{(n)},\mu_{/j}^{(n)}\right)^{-1}   H_{L_y}^{\theta_i}  \left(\theta_j,\mu_{/j}^{(n)}\right)    \right] \right).
\end{equation}
Now  by assumption we have
\begin{equation}\label{varlapnearzero}
0 = \Psi(y,\mu^{(n)}) - R^{(n)}(y,\mu^{(n)}) \\
\implies  \Psi(y,\mu^{(n)}) = R^{(n)}(y,\mu^{(n)})  =  o_{\mathbb{P}}(1).
\end{equation}
Hence, the consistency follows by using  \fullref{Z-estimator}.
The asymptotic efficiency then follows by using theorem 5.21 in \citet{van2000asymptotic}: First note that we get a local Lipschitz bound since we assumed the log-likelihood to be sufficiently smooth. Then, under our assumptions, this theorem yields that 
\begin{equation}
\sqrt{n}(\mu^{(n)} - \theta_0)   \overset{d}{\longrightarrow} G \sim \mathcal{N}\left(0, V_{\theta_0}^{-1} \mathbb{E}_{\theta_0} \left[ \frac{\partial \ln p(y|\theta)}{\partial \theta}\left(\frac{\partial \ln p(y|\theta)}{\partial \theta}  \right)^T \bigg\rvert_{\theta_0}\right] V_{\theta_0}^{-1} \right)
\end{equation}
where 
\begin{equation}
V_{\theta_0} = \frac{\partial}{\partial \theta} \left( \int \frac{\partial}{\partial \theta^T} p(y|\theta) dy \right) \bigg\rvert_{\theta_0}
\end{equation}
Now by the assumed interchange of derivation and integration, as in \eqref{fisherconditions}, we get that $V_{\theta_0} =  - \mathcal{I}(\theta_0)$. The term in the middle is by definition equal to $\mathcal{I}(\theta_0)$. Hence,
\begin{equation}
V_{\theta_0}^{-1} \mathbb{E}_{\theta_0} \left[ \frac{\partial \ln p(y|\theta)}{\partial \theta}\left(\frac{\partial \ln p(y|\theta)}{\partial \theta}  \right)^T \bigg\rvert_{\theta_0}\right] V_{\theta_0}^{-1} = \mathcal{I}(\theta_0)^{-1},
\end{equation}
which concludes the proof.

\subsubsection{Variational Laplace in Natural Exponential Family Models}\label{sec:expfam}
Here we want to discuss the applicability of our general results on variational Laplace to natural exponential family models . We will first state some properties of such models. Exponential families are characterized by the density taking the form
\begin{equation}
\begin{aligned}
&p(y|\theta) = g(\theta) \exp \left(\theta^T T(y)\right)\\
&g(\theta)^{-1} = \int \exp \left(\theta^T T(y)\right) dy.
\end{aligned}
\end{equation}
\citep{van2000asymptotic,casella2002statistical,ruschendorf2014mathematische}.
This expression makes sense as long as $g$ is well defined such that the integral has a finite value. The set $\Theta$ of all values $\theta$ that fulfill these requirement is called natural parameter space of the model. In the interior of this space,
one has that
\begin{equation}
\frac{\partial}{\partial \theta} \ln g(\theta) = - \mathbb{E}_{\theta}[T]
\end{equation}
\begin{equation}
\frac{\partial^2}{\partial^2\theta} \ln g(\theta) = -\operatorname{Cov}_{\theta}[T].
\end{equation}
Also, we have that
\begin{equation}
\operatorname{D_{KL}}\left(p(y|\theta_0)||p(y|\theta)\right) = \ln \frac{g(\theta_0)}{g(\theta)} + \theta_0^T \mathbb{E}_{\theta_0}[T] - \theta^T \mathbb{E}_{\theta_0}[T]
\end{equation}
such that we also get
\begin{equation}
\frac{\partial^2}{\partial \theta \partial \theta^T} \operatorname{D_{KL}}\left(p(y|\theta_0)||p(y|\theta)\right) = \operatorname{Cov}_{\theta}[T],
\end{equation}
which, together with the fact that covariance matrizes are positive definite, ensures the convexity of the Kullback-Leibler-divergence  in $\theta$.
In the following, we will denote  by  $\operatorname{Cov}^k_{\theta}[T]$ the block-diagonal part of the covariance matrix that corresponds to the $k$-th parameter set. Also we will also denote $\pi(\theta) = \ln p(\theta)$ for the logarithm of the prior
\myparagraph{\fullref{prop4} in exponential families}
Recalling that due to the parameters being fixed points we have that
\begin{equation}
\Sigma_k^{(n)} =  -H_{L_y}^{\theta_k} (\mu^{(n)})^{-1}.
\end{equation}
Thus, we get
\begin{equation}
\begin{aligned}
&\operatorname{tr}\left[ \Sigma_k^{(n)}   H_{L_y}^{\theta_k}(\mu_j,\mu_{/j}^{(n)})\right] \\
& =  \operatorname{tr} \left[ - H_{L_y}^{\theta_k} (\mu_j^{(n)},\mu_{/j}^{(n)}) ^{-1} H_{L_y}^{\theta_k}(\mu_j,\mu_{/j}^{(n)})  \right] \\
& = \operatorname{tr} \left[ -    \left(n\operatorname{Cov}^k_{\mu_j^{(n)},\mu_{/j}^{(n)}}[T] +H_{\pi}^{\theta_k}  (\mu_j^{(n)},\mu_{/j}^{(n)}) \right)^{-1}   \left( n\operatorname{Cov}^k_{\mu_j,\mu_{/j}^{(n)}}[T] +  H_{\pi}^{\theta_k} (\mu_j,\mu_{/j}^{(n)})\right) \right] \\
& = \operatorname{tr} \left[ -    \left(\operatorname{Cov}^k_{\mu_j^{(n)},\mu_{/j}^{(n)}}[T] + \frac{1}{n} H_{\pi}^{\theta_k}  (\mu_j^{(n)},\mu_{/j}^{(n)}) \right)^{-1}   \left( \operatorname{Cov}^k_{\mu_j,\mu_{/j}^{(n)}}[T] + \frac{1}{n}  H_{\pi}^{\theta_k} (\mu_j,\mu_{/j}^{(n)})\right) \right] \\
& \overset{\mathbb{P}}{\longrightarrow}  \operatorname{tr} \left[ -    \left(\operatorname{Cov}^k_{\theta'_j,\theta'_{/j}}[T] \right)^{-1}   \left( \operatorname{Cov}^k_{\mu_j,\theta'_{/j}}[T]   \right) \right]
\end{aligned}
\end{equation}
by the continuous mapping theorem and Slutsky's lemma, where the main point here is that the second derivatives of the likelihoods are not data-dependent anymore. At this point we  have to make the additional assumption that the prior is such that the above term is well-defined.
Then, all the additional trace terms will be in $\mathcal{O}_{\mathbb{P}}(n)$ and will not influence our limit, as we required.
Thus, we can study the remaining part of $I_j^{(n)}(\mu_j)$. We have that
\begin{equation}
\begin{aligned}
&\frac{1}{n} \sum_{i=1}^{n} \frac{\ln p(y_i|\mu_j,\mu_{/j}^{(n)})}{\ln p(y|\theta_0)}\\
&=  \ln \frac{ g(\mu_j,\mu_{/j}^{(n)})}{g(\theta_0)}   +  \left( \mu_j,\mu_{/j}^{(n)} - \theta_0 \right)^T \frac{\sum_{i=1}^{n} T(y_i)}{n} \\
& \overset{\mathbb{P}}{\longrightarrow} \ln \frac{g(\mu_j, \theta'_{/j})}{g(\theta_0)} + (\mu_j, \theta'_{/j})^T \mathbb{E}_{\theta_0}[T] - \theta_0^T \mathbb{E}_{\theta_0}[T] = - \operatorname{D_{KL}}\left( p(y|\theta_0)|| p(y|| \mu_j, \theta'_{/j}) \right)
\end{aligned}
\end{equation}
again using Slutsky's lemma and the continuous mapping theorem. Finally, we should have that 
\begin{equation}
\frac{\ln p( \mu_j,\mu_{/j}^{(n)})}{n} \overset{\mathbb{P}}{\longrightarrow} 0,
\end{equation}
which will be true as long as $p(\mu_j,\theta'_{/j}) >0$ - but else, a maximum should not attained at these points anyway.
Taking all our considerations together we see that indeed
\begin{equation}
\frac{I_j^{(n)}(\mu_j)}{n} \overset{\mathbb{P}}{\longrightarrow} -\operatorname{D_{KL}}(p(y|\theta_0) || p(y| \mu_j, \theta'_{/j}).
\end{equation}
Thus, as maximum-likelihood estimators are consistent in natural parameter exponential family models \citep{van2000asymptotic}, one could conjecture that also in our case the convergence of the maximizers will take place.
Indeed, since we already assumed the convergence of the maximizers $\mu_j^{(n)} \overset{\mathbb{P}}{\longrightarrow} \theta_j'$, we can find a compact set $K_\varepsilon$ such that $\mu_j^{(n)} \in K_\varepsilon$ with probability $1-\varepsilon$, for arbitrarily small $\varepsilon$. 
Let us denote the additional Hessian-matrix dependent terms as
\begin{equation}
\begin{aligned}
S_k(\mu_j) := \operatorname{tr} \left[ -\left(\operatorname{Cov}^k_{\mu_j^{(n)},\mu_{/j}^{(n)}}[T] + \frac{1}{n} H_{\pi}^{\theta_k}  (\mu_j^{(n)},\mu_{/j}^{(n)}) \right)^{-1}   \left( \operatorname{Cov}^k_{\mu_j,\mu_{/j}^{(n)}}[T] + \frac{1}{n}  H_{\pi}^{\theta_k} (\mu_j,\mu_{/j}^{(n)})\right) \right]   
\end{aligned}
\end{equation}
Now, for our compact sets we have
\begin{equation}
\begin{aligned}
&\underset{\mu_j \in K_\varepsilon}{\sup} \left| \frac{I_j^{(n)}(\mu_j)}{n} - (-\operatorname{D_{KL}}(p(y|\theta_0) || p(y| \mu_j, \theta'_{/j}))\right|  \\
&= \underset{\mu_j \in K_\varepsilon}{\sup} \left| \ln \frac{ g(\mu_j,\mu_{/j}^{(n)})}{g(\mu_j, \theta'_{/j})}  +   \left( \mu_j,\mu_{/j}^{(n)} - \theta_0 \right)^T \frac{\sum_{i=1}^{n} T(y_i)}{n}  -\left(\mu_j, \theta'_{/j} - \theta_0\right)^T \mathbb{E}_{\theta_0}[T]   
 + \frac{1}{n} \sum_{k \neq j} S_k(\mu_j)\right| \\
& \leq \underset{\mu_j \in K_\varepsilon}{\sup} \left| \ln \frac{ g(\mu_j,\mu_{/j}^{(n)})}{g(\mu_j, \theta'_{/j})}   \right| +  \underset{\mu_j \in K_\varepsilon}{\sup} \left| \left( \mu_j,\mu_{/j}^{(n)}\right)^T  \frac{\sum_{i=1}^{n} T(y_i)}{n}  -\left(\mu_j, \theta'_{/j}\right)^T\mathbb{E}_{\theta_0}[T] \right| \\
&+ \frac{1}{n} \sum_{k \neq j}  \underset{\mu_j \in K_\varepsilon}{\sup} \left| S_k(\mu_j)\right| \\
& \leq \underset{\mu_j \in K_\varepsilon}{\sup} \left| \ln \frac{ g(\mu_j,\mu_{/j}^{(n)})}{g(\mu_j, \theta'_{/j})}   \right| +  \underset{\mu_j \in K_\varepsilon}{\sup} \left| \mu_j \left(\frac{\sum_{i=1}^{n} T_j(y_i)}{n} - \mathbb{E}_{\theta_0}[T_j]\right)\right| \\
&+   \left| \sum_{k \neq j} \mu_k^{(n)} \frac{\sum_{i=1}^{n} T_k(y_i)}{n} - \theta'_k \mathbb{E}_{\theta_0}[T_k]\right|     + \frac{1}{n} \sum_{k \neq j}  \underset{\mu_j \in K_\varepsilon}{\sup} \left| S_k(\mu_j)\right| \\
&\overset{\mathbb{P}}{\longrightarrow} 0
\end{aligned}
\end{equation}
which follows from the continuous mapping theorem, since all of these terms will be continuous functions in $\mu_j$, and taking the supremum of a continuous function over a subset of parameters on a compact set is itself a continuous function in the remaining parameters \citep{hu2013handbook}
Thus, we can apply  \fullref{M-estimator}, with $I_j^{(n)}$ taking the place of $M^{(n)}$. The second condition needed for the proof, that is the well-separated-ness of the limit function will be also fulfilled by the convexity of the Kullback-Leibler-divergence. Applying this proof on the compact set $K_\varepsilon$ 
\begin{equation}
\mathbb{P} \left[ \left(\mu_j^{(n)} \in K_\varepsilon \right) \land \left( \left|\mu_j^{(n)} - \mu_j^*\right| > \delta\right) \right] \longrightarrow 0,
\end{equation}
where
\begin{equation}
\mu_j^* = \underset{ \mu_j }{\arg \max}  (-\operatorname{D_{KL}}(p(y|\theta_0) || p(y| \mu_j, \theta'_{/j})).
\end{equation}
And thus, since $\varepsilon$ was arbitrary,
\begin{equation}
\mathbb{P} \left[\left|\mu_j^{(n)} - \mu_j^*\right| > \delta\right] \longrightarrow 0.
\end{equation}
This shows that all our requirements are indeed satisfied.
\myparagraph{ \fullref{varlaptheorem} in exponential families}
The conditions needed to establish  \fullref{varlaptheorem} are similar to those that were needed to establish the previous proposition. However, since the assumptions on the probabilistic convergence of the likelihood functions are stronger in this case, we will need stronger assumptions on the parameter space.
Indeed we have that
\begin{equation}
\begin{aligned}
 &\frac{1}{n}\frac{\partial}{\partial \theta} \ln {p(y|\theta)} +  \frac{\partial}{\partial \theta}  \operatorname{D_{KL}}(p(y|\theta_0)|| p(y|\theta))  \\
&= \frac{\sum_{i=1}^{n} T(y_i)}{n} - \mathbb{E}_{\theta}[T].
\end{aligned}
\end{equation}
The supremum of this expression over the whole $\Theta$ may be unbounded, such that we might need the restriction of $\Theta$ being compact. Under this additional assumption, and assuming the variational parameters are in the interior of this compact set, our
assumptions are fulfilled: The well-separated-ness condition
\begin{equation}
\underset{\theta : d(\theta,\theta_0) \geq \varepsilon}{\inf} ||  \frac{\partial}{\partial \theta}  \operatorname{D_{KL}}(p(y|\theta_0)|| p(y|\theta))|| >0, \  \forall \varepsilon >0
\end{equation}
is satisfied by the identifiability and the convexity of the model. The exchange of derivative and integration is permitted in exponential families \citep{van2000asymptotic,shao2003mathematical}. Thus, it remains to check the asymptotics of the remainder term. Again, as long as the logarithm of the prior 
is twice continuously differentiable in some neighborhood of $\theta_0$, the influence of the prior will not matter in the limit. As in the previous paragraph, we can evaluate
\begin{equation}
\begin{aligned}
 &\frac{\partial}{\partial \theta_j} \operatorname{tr} \left[ H_{L_y}^{\theta_k}\left(\mu_j^{(n)},\mu_{/j}^{(n)}\right)^{-1}   H_{L_y}^{\theta_k}  \left(\theta_j,\theta_{/j}\right)    \right] \bigg \rvert_{\mu_j^{(n)},\mu_{/j}^{(n)}}  \\
&=  \frac{\partial}{\partial \theta_j} \operatorname{tr} \left[  \left(\operatorname{Cov}^k_{\mu_j^{(n)},\mu_{/j}^{(n)}}+ \frac{1}{n} H_{\pi}^{\theta_k}(\mu_j^{(n)},\mu_{/j}^{(n)}\right) ^{-1}   \left(\operatorname{Cov}^k_{\theta_j,\theta_{/j}}+ \frac{1}{n} H_{\pi}^{\theta_k}(\theta_j,\theta_{/j}\right)  \right] \bigg \rvert_{\mu_j^{(n)},\mu_{/j}^{(n)}} 
\end{aligned}
\end{equation}
which we observe is again not data dependent anymore. Thus, as long as this expression is a continuous function in $\theta$, this term will be bounded, as we are working on a compact set. For the continuity of this expression, it will for example be sufficient to assume that the prior is three times continuously differentiable and log-concave on $\Theta$, which is for example fulfilled by Gaussian priors. By this boundedness we thus see that 
\begin{equation}
\begin{aligned}
&R^{(n)}(y,\mu^{(n)}) = o_\mathbb{P}(1) \\
&\sqrt{n}R^{(n)}(y,\mu^{(n)}) = o_\mathbb{P}(1).  \\
\end{aligned}
\end{equation}
Hence, under the given conditions, we have that variational Laplace is consistent and asymptotically efficient in natural parameter exponential families on a compact parameter space.

\section{Supplementary Results}

\subsection{Derivation of the Variational Laplace Update Scheme}

\subsubsection{General Case}\label{sec:varlapderivation}
Here we want to show how the approximated version of the $\operatorname{ELBO}$ in variational Laplace is derived by a Taylor approximation. 
To write out the respective equations explicitly, and for notational ease, in this section we assume we want to update the $\operatorname{ELBO}$ with respect to the density on the
parameter $\theta_1$, but the derivations are exactly the same for any other of the parameters.
Hence, we assume we have fixed $\mu_2^*,...,\mu_p^*$  and $\Sigma_2^*,...,\Sigma_p^*$  from our previous iterations step and only want to optimize with respect to the first variational expectation and covariance parameter.
That is, we define
\begin{equation}
\begin{aligned}
&E(\mu_1,\Sigma_1) := \\
&\int \ln p(y,\theta_1,\theta_2,...,\theta_p) q_{\mu_1,\Sigma_1}(\theta_1) q_{\mu_2^*,\Sigma_2^*}....q_{\mu_p^*,\Sigma_p^*}(\theta_p) d\theta_1 ...d \theta_p \\
&+ \mathbb{H}[q_{\mu_1,\Sigma_1}(\theta_1)] + \sum_{j=2}^{p}\mathbb{H}[ q_{\mu_j^*,\Sigma_j^*}(\theta_j)]
\end{aligned}
\end{equation}
and want to optimize this  function with respect to both parameters. The second part of this equation are the differential entropies of the variational densities which evaluate to
\begin{equation}
\frac{1}{2} \ln \det (\Sigma_1) +   \frac{1}{2} \sum_{j=2}^{p} \ln \det(\Sigma_j^*) + C.
\end{equation}
We now can employ a Taylor approximation to approximate the first integral. to this end, recall the definition
\begin{equation}
L_y(\theta) = L_y(\theta_1,...,\theta_p) = \ln p(y,\theta_1,...,\theta_p),
\end{equation}
We proceed by applying a Taylor approximation centered at the variational expectation parameters $(\mu_1,\mu_2^*,...,\mu_p^*) = (\mu_1,\mu_{/1}^*)$ to obtain
\begin{equation}
\begin{aligned}
&\int {L_y(\theta_1,...,\theta_p) q_{\mu_1,\Sigma_1}(\theta_1) ...q_{\mu_p^*,\Sigma_p^*}(\theta_p) d\theta_1 ... d \theta_p} \approx \\
& \int{L_y(\mu_1,\mu_{/1}^*) q_{\mu_1,\Sigma_1}(\theta_1) ... q_{\mu_p^*,\Sigma_p^*}(\theta_p) d\theta_1 ... d \theta_p} + \\
&  \int  {J_{L_y}(\mu_1,\mu_{/1}^*)^T \left( \left(\begin{array}{c} \theta_1   \\ ...\\ \theta_p \end{array} \right) - \left(\begin{array}{c} \mu_1  \\ ...\\ \mu_p^* \end{array} \right) \right)  q_{\mu_1,\Sigma_1}(\theta_1) ... q_{\mu_p^*,\Sigma_p^*}(\theta_p) d\theta_1 ... d \theta_p }+\\
& \int{\frac{1}{2}    \left( \left(\begin{array}{c} \theta_1   \\ ...\\ \theta_p \end{array} \right) - \left(\begin{array}{c} \mu_1   \\ ...\\ \mu_p^* \end{array} \right) \right)^T     H_{L_y}(\mu_1,\mu{/1}^*)  \left( \left(\begin{array}{c} \theta_1    \\ ...\\ \theta_p \end{array} \right) - \left(\begin{array}{c} \mu_1 \\ ...\\ \mu_p^* \end{array} \right) \right)  q_{\mu_1,\Sigma_1}(\theta_1) ... q_{\mu_p^*,\Sigma_p^*}(\theta_p) d\theta_1 ... d \theta_p}  .\\
\end{aligned}
\end{equation}
We obviously have 
\begin{equation}
\int{L_y(\mu_1,\mu_{/1}^*) q_{\mu_1,\Sigma_1}(\theta_1) ... q_{\mu_p^*,\Sigma_p^*}(\theta_p) d\theta_1 ... d \theta_p}   = L_y(\mu_1,\mu_{/1}^*) .
\end{equation}
Furthermore, by our choice of the expansion point of the Taylor approximation, we get that
\begin{equation}
\begin{aligned}
& \int   J_L(\mu_1,\mu_{/1}^*)^T \left( \left(\begin{array}{c} \theta_1    \\ ...\\ \theta_p \end{array} \right) - \left(\begin{array}{c} \mu_1    \\ ...\\ \mu_p^* \end{array} \right) \right)  q_{\mu_1,\Sigma_1}(\theta_1) ...q_{\mu_p^*,\Sigma_p^*}(\theta_p) d\theta_1 ...d \theta_p \\
&=  J_L(\mu_1,\mu_{/1}^*)^T  \int  \left( \left(\begin{array}{c} \theta_1   \\ ...\\ \theta_p \end{array} \right) - \left(\begin{array}{c} \mu_1 \\ ...\\ \mu_p^* \end{array} \right) \right)  q_{\mu_1,\Sigma_1}(\theta_1) ...q_{\mu_p^*,\Sigma_p^*}(\theta_p) d\theta_1 ...d \theta_p . \\
&= 0.
\end{aligned}
\end{equation}
To evaluate the third integral, we first note that due to the mean-field factorization, the covariance matrix of $(\theta_1,\theta_2,...,\theta_p)$ takes a block-diagonal form as follows 
\begin{equation}
\Sigma:=\left(\begin{array}{c|c|c|c}\Sigma_1  & 0  &0 & \hdots  \\ \hline 0 & \Sigma_2^* & 0 & \hdots \\ \hline \vdots & 0 & \ddots & \ddots  \end{array}\right).
\end{equation}
We have that 
\begin{equation}
\begin{aligned}
& \int{\frac{1}{2}    \left( \left(\begin{array}{c} \theta_1   \\ ...\\ \theta_p \end{array} \right) - \left(\begin{array}{c} \mu_1^*    \\ ...\\ \mu_p^* \end{array} \right) \right)^T     H_{L_y}(\mu_1,\mu_{/1}^*)  \left( \left(\begin{array}{c} \theta_1  \\ ...\\ \theta_p \end{array} \right) - \left(\begin{array}{c} \mu_1^*     \\ ...\\ \mu_p^* \end{array} \right) \right)  q_{\mu_1,\Sigma_1}(\theta_1) ...q_{\mu_p^*,\Sigma_p^*}(\theta_p) d\theta_1 ...d \theta_p}  \\\\
&= \frac{1}{2} \left(\left(\begin{array}{c} \mu_1   \\ ...\\ \mu_p^* \end{array} \right) - \left(\begin{array}{c} \mu_1   \\ ...\\ \mu_p^* \end{array} \right) \right)^TH_{\theta_1,\mu_{/1}^*}  \left(\left(\begin{array}{c} \mu_1   \\ ...\\ \mu_p^* \end{array} \right) - \left(\begin{array}{c} \mu_1    \\ ...\\ \mu_p^* \end{array} \right) \right) + \frac{1}{2} \operatorname{tr}(H_{L_y}(\mu_1,\mu_{/1}^*) \Sigma)\\\\
& = 0 + \frac{1}{2} \operatorname{tr}( H_{L_y}(\mu_1,\mu_{/1}^*) \Sigma),
\end{aligned}
\end{equation}
which again follows from the choice of expansion point and using \eqref{expectquad}.
The trace term, due to the block diagonal form of $\Sigma$, can be written as
\begin{equation}
\begin{aligned}
&\frac{1}{2} \sum_{k=1}^{p} \operatorname{tr}\left(\Sigma_k H_{L_y}^{\theta_k}(\mu_1,\mu_{/1}^* )\right).\\
\end{aligned}
\end{equation}
Thus, altogether the function that we want to optimize now reads
\begin{equation}
E(\mu_1,\Sigma_1) \approx L_y(\mu_1,\mu_{/1}^*) +  \frac{1}{2} \sum_{k=1}^{p} \operatorname{tr}(\Sigma_k H_{L_y}^{\theta_k}(\mu_1,\mu_{/1}^* )) + \frac{1}{2} \sum_{k=1}^{p} \ln \det(\Sigma_k) =: \widetilde{E}(\mu_1,\Sigma_1).
\end{equation}
For a fixed $\mu_1$, using \eqref{derivtrace} and \eqref{derivlog}, the matrix derivative with respect to $\Sigma_1$ of this equation yields
\begin{equation}
\frac{\partial}{\partial \Sigma_1}\widetilde{E}(\mu_1,\Sigma_1)  = \frac{1}{2}(H_{L_y}^{\theta_1}(\mu_1,\mu_{/1}^* ))  +  \Sigma_1^{-1}),
\end{equation}
which is $0$ iff 
\begin{equation}
\Sigma_1  = - \frac{1}{2}H_{L_y}^{\theta_1}(\mu_1,\mu_{/1}^* ),
\end{equation}
provided that this yields a positive definite symmetric matrix as discussed in the main text.
We could now, in principle, plug this optimal value into the above equation and optimize with respect to $\mu_1$. This would leave us, up to constants, with optimizing the following expression:
\begin{equation}\
\widetilde{E}(\mu_1) = L_y(\mu_1,\mu_{/1}^*) +  \frac{1}{2} \sum_{k=2}^{p} \operatorname{tr}(\Sigma_k H_{L_y}^{\theta_k}(\mu_1,\mu_{/1}^* )) + \frac{1}{2}  \ln \det  (- \frac{1}{2}H_{L_y}^{\theta_1}(\mu_1,\mu_{/1}^* )).
\end{equation} 
Alternatively, one could also identify critical points by optimizing  with respect to the variational expectation parameter first. This would entail optimizing
\begin{equation}
\widetilde{E}(\mu_1) =L_y(\mu_1,\mu_{/1}^*)   +  \frac{1}{2} \sum_{k=1}^{p} \operatorname{tr}(\Sigma_k H_{L_y}^{\theta_k}(\mu_1,\mu_{/1}^* )) 
\end{equation}
first, and then setting the covariance parameter according to the preceding analytical derivations. However, in the literature, a different optimization scheme is advocated \citep{friston2007variational,daunizeau2009variational,buckley2017free}.
In this scheme, the "variational energy" 
\begin{equation}
I(\mu_1) = L_y(\mu_1,\mu_{/1}^*)   +  \frac{1}{2} \sum_{k=2}^{p} \operatorname{tr}(\Sigma_k H_{L_y}^{\theta_k}(\mu_1,\mu_{/1}^* )) 
\end{equation}
is optimized with respect to $\mu_1$.
A general motivation seems to be the assumption that the curvature of the model is of minor importance\citep{parr2020markov,buckley2017free,friston2007variational}. In this case, the additional term $ \ln \det  (- \frac{1}{2}H_{L_y}^{\theta_1}(\mu_1,\mu_{/1}^* ))$ will not matter for optimization. 
However, this will not hold true in general, such that this motivation may not be justified.  
 Thus, we want to give another consideration that may be behind this update scheme:
Recall the "fundamental lemma of variational Inference", which determines the optimal solution for a coordinate-wise update of the mean-field variational densities,
\begin{equation}
q(\theta_1) \propto \exp\left(\int L_y(\theta_1,\theta_2,...,\theta_p) q_2(\theta_2)q_3(\theta_3)...q_n(\theta_p) d \theta_2 d\theta_3...d\theta_p\right),
\end{equation}
where $q_2(\theta_2),...,q_n(\theta_p)$ are assumed to have been previously optimized.
It is also convenient to express this as 
\begin{equation}\label{fundamental}
 \ln q(\theta_1) = \int L_y(\theta_1,\theta_2,...,\theta_p) q_2(\theta_2)q_3(\theta_3)...q_n(\theta_p) d \theta_2 d\theta_3...d\theta_p + C.
\end{equation}
Now, for $j \in  \{ 2,...,n\}$, let us denote
\begin{equation}
\begin{aligned}
& \mu_j = \int \theta_j q_j(\theta_j) d\theta_j \\
& \Sigma_j = \int (\theta_j-\mu_j)(\theta_j -\mu_j)^T q_j(\theta_j) d\theta_j,
\end{aligned}
\end{equation}
note that we did not yet specify the form of $q_j$.
One can now, again, pursue a Taylor approximation to evaluate the integral involved in the above update step for the variational density $q_1$. Thus, here one approximates
\begin{equation}
\begin{aligned}
\ln q_1(\theta_1) = &\int L_y(\theta_1,\theta_2,...,\theta_p)  q(\theta_2)...q(\theta_p) d \theta_1 ... d\theta_p \approx\\
& \int L_y(\theta_1,\mu_{/1})q(\theta_2)...q(\theta_p) d \theta_1 ... d\theta_p \ +  \\
&  \int J_{L_y}(\theta_1,\mu_{/1})^T \left( \left(\begin{array}{c} \theta_1 \\ \theta_2   \\ ...\\ \theta_p \end{array} \right) - \left(\begin{array}{c} \theta_1 \\ \mu_2   \\ ...\\ \mu_p \end{array} \right) \right) q(\theta_2)...q(\theta_p) d \theta_1 ... d\theta_p  \ + \\
&  \int \frac{1}{2}    \left( \left(\begin{array}{c} \theta_1 \\ \theta_2   \\ ...\\ \theta_p \end{array} \right) - \left(\begin{array}{c} \theta_1 \\ \mu_2   \\ ...\\ \mu_p \end{array} \right) \right)^T     H_{L_y}(\theta_1,\mu_{/1} )  \left( \left(\begin{array}{c} \theta_1 \\ \theta_2   \\ ...\\ \theta_p \end{array} \right) - \left(\begin{array}{c} \theta_1 \\ \mu_2   \\ ...\\ \mu_p \end{array} \right) \right)   q(\theta_2)...q(\theta_p) d \theta_1 ... d\theta_p. \\
\end{aligned}
\end{equation}
Note that the here, in contrast to the previously described scheme, the expansion point of the Taylor approximation is $ (\theta_1,\mu_{/1}) = (\theta_1,\mu_2,...,\mu_p)$, to emphasize that the result of this approximation is a density on $\theta_1$.
With the same derivations as employed before, this approximation then leads to 
\begin{equation}
\ln q_1(\theta_1) =  L_y(\theta_1,\mu_{/1}) + \frac{1}{2} \sum_{j=2}^{p} \operatorname{tr}(\Sigma_j H_{L_y}^{\theta_k}(\theta_1,\mu_{/1} )) + C
\end{equation}
We notice that up to this point we did not restrict the form of $q_2,...,q_p$ to be Gaussian. Furthermore, we see that a mode of this approximated density $q_1$ will exactly be a maximum of the previously defined "variational energy" $I(\mu_1)$.
Now, with the assumption of $q_1$ being Gaussian, we get that the mode will be equal to the mean, and hence the mode could be passed as a new value for $\mu_1$ to proceed to updating the other densities. However, this Gaussian assumption will make make the expression of the optimal density as in  equation \eqref{fundamental} invalid, such that the previous derivation via the Taylor approximation becomes meaningless. Hence, these ideas should be understood as a heuristic, rather than a rigorous derivation.

\subsubsection{Nonlinear Transform Model}\label{sec:varlapderivation_model}
In this section we will derive the variational Laplace update equations in the nonlinear transform model , but first we want to give a brief reasoning why the free-form approach may not be feasible in this model.
to this end, we posit a mean-field factorized variational density of the form $q(\theta,\lambda) = q(\theta) q(\lambda)$. Without any further assumptions, by the fundamental Lemma of variational inference, the coordinate-wise update for the variational density $q(\theta)$ would read
\begin{equation}
\begin{aligned}
\ln q(\theta) &=  \int  \ln p(y|\theta,\lambda) + \ln p(\theta) + \ln p(\lambda)  q(\lambda) d\lambda +C \\
		& =  \int -\frac{1}{2} (y - f(\theta)^T \Sigma(\lambda)^{-1} (y-f(\theta)    + \ln p(\theta) +C \\
		&= -\frac{1}{2} (y-f(\theta))^T \mathbb{E}_{q(\lambda)}[ Q(\lambda)^{-1}] (y-f(\theta)) +  \ln p(\theta) +C
\end{aligned}
\end{equation}
where we have avoided all terms not depending on $\theta$, as these will be normalized out. We thus see, that this approach will not yield any significant advantage over the computation of the true posterior: As long as  $\mathbb{E}_{q(\lambda)}[ Q(\lambda)^{-1}]$ will yield a valid covariance matrix, this variational density will have the same functional form as the conditional posterior $p(\theta| \lambda,y)$. The nonlinearities in the likelihood will typically make the involved calculations impractical, so that the free-form approach will not yield a easy-to-solve update scheme.
Thus, it may be justified to apply the approximation employed by the variational Laplace scheme. 

To make this section self-contained, we will explicitly state the steps of the derivation again.
Therefore, we assume Gaussian densities $q_\theta = \mathcal{N}(\mu_\theta,\Sigma_\theta) $ and $q_\lambda = \mathcal{N}(\mu_\lambda,\Sigma_\lambda)$.  
We want to update our variational densities at some iteration step (iteration superscripts will be left out for clarity), where from the previous iteration we have found the optimal parameters 
$ ( \mu_\theta^*, \mu_\lambda^*, \Sigma_\theta^*,\Sigma_\lambda^*) $. We want to now optimize the $\operatorname{ELBO}$ coordinate-wise, and start by optimization with respect to $(\mu_\theta,\Sigma_\theta)$. Thus, we want to optimize the function
\begin{equation}
\begin{aligned}
&E(\mu_\theta,\Sigma_\theta) = \operatorname{ELBO}(q_{\mu_\theta,\Sigma_\theta}(\theta),q_{\mu_\lambda^*,\Sigma_\lambda^*}(\lambda))\\
&= \int L_y(\theta,\lambda) q_{\mu_\theta,\Sigma_\theta}(\theta),q_{\mu_\lambda^*,\Sigma_\lambda^*}(\lambda) d\theta d\lambda  - \int q_{\mu_\theta,\Sigma_\theta}(\theta) d\theta - \int q_{\mu_\lambda^*,\Sigma_\lambda^*}(\lambda)d\lambda \\
&= \int L_y(\theta,\lambda) q_{\mu_\theta,\Sigma_\theta}(\theta),q_{\mu_\lambda^*,\Sigma_\lambda^*}(\lambda) d\theta d\lambda  + \mathbb{H}[q_{\mu_\theta,\Sigma_\theta}(\theta)] + C.
\end{aligned}
\end{equation}
We can directly evaluate 
\begin{equation}
 \mathbb{H}[q_{\mu_\theta,\Sigma_\theta}(\theta)] = \frac{1}{2} \ln \det (\Sigma_\theta)   + C,
\end{equation}
where we will ignore constants that do not depend on our variational parameters.
For the remaining integral term, we proceed by employing a Taylor approximation of the function $ L_y(\theta,\lambda)  =\ln p(y,\theta,\lambda)$ at the expansion point $(\mu_\theta,\mu_\lambda^*)$. 
We can then approximate the integral 
\begin{equation}
\begin{aligned}
& \int L_y(\theta,\lambda) q_{\mu_\theta,\Sigma_\theta}(\theta),q_{\mu_\lambda^*,\Sigma_\lambda^*}(\lambda) d\theta d\lambda  \\
& \approx L_y(\mu_\theta,\mu_\lambda^*) +  J_L(\mu_\theta,\mu_\lambda^*)    \int    \left(\begin{array}{c} \theta \\ \lambda \end{array}\right)  -  \left(\begin{array}{c} \mu_\theta \\ \mu_\lambda^* \end{array}\right) q_{\mu_\theta,\Sigma_\theta}(\theta),q_{\mu_\lambda^*,\Sigma_\lambda^*}(\lambda) d\theta d\lambda \\
& + \frac{1}{2}   \int      \left(\begin{array}{c} \theta \\ \lambda \end{array}\right)  -  \left(\begin{array}{c} \mu_\theta \\ \mu_\lambda^* \end{array}\right)^T  H_{L_y}(\mu_\theta,\mu_\lambda^*)  \left(\begin{array}{c} \theta \\ \lambda \end{array}\right)  -  \left(\begin{array}{c} \mu_\theta \\ \mu_\lambda^* \end{array}\right)   q_{\mu_\theta,\Sigma_\theta}(\theta),q_{\mu_\lambda^*,\Sigma_\lambda^*}(\lambda) d\theta d\lambda  \\
& =  L_y(\mu_\theta,\mu_\lambda^*)   + \frac{1}{2}   \operatorname{tr} [H_{L_y}(\mu_\theta,\mu_\lambda^*) \Sigma],
\end{aligned}
\end{equation}
Where $ \Sigma =  \left(\begin{array} {c|c} \Sigma_\theta & 0   \\ \hline 0 &  \Sigma_\lambda^* \end{array}\right) $. We see that due to this block-diagonal form of $\Sigma$, we can split the trace term on the right hand side up, to yield
\begin{equation}
 \frac{1}{2}   \operatorname{tr} [H_{L_y}(\mu_\theta,\mu_\lambda^*)\Sigma] =  \frac{1}{2}   \operatorname{tr} [H_{L_y}^{\theta}(\mu_\theta,\mu_\lambda^*) \Sigma_\theta] + \frac{1}{2}   \operatorname{tr} [H_{L_y}^{\lambda}(\mu_\theta,\mu_\lambda^*) \Sigma_\lambda^*],
\end{equation}
Thus, the function that we want to optimize now reads
\begin{equation}\label{objective}
E(\mu_\theta,\Sigma_\theta) =   L_y(\mu_\theta,\mu_\lambda^*) + \frac{1}{2}   \operatorname{tr} [H_{L_y}^{\theta}(\mu_\theta,\mu_\lambda^*) \Sigma_\theta] + \frac{1}{2}   \operatorname{tr} [H_{L_y}^{\lambda}(\mu_\theta,\mu_\lambda^*)  \Sigma_\lambda^*] + \frac{1}{2} \ln \det (\Sigma_\theta) .
\end{equation}
If we assume that we can calculate the matrix derivative of this expression with respect to $\Sigma_\theta$, using \eqref{derivtrace} and \eqref{derivlog} it evaluates to 
\begin{equation}
\frac{\partial}{\partial \Sigma_\theta}  E(\mu_\theta,\Sigma_\theta) = \frac{1}{2} (  H_{L_y}^{\theta}(\mu_\theta,\mu_\lambda^*)   + \Sigma_\theta^{-1}),
\end{equation}
which is zero iff
\begin{equation}
\Sigma_\theta = - H_{L_y}^{\theta}(\mu_\theta,\mu_\lambda^*)^{-1}.
\end{equation}
At this point it is important to note that this derivation is made without constraining the form of $\Sigma_\theta$. Thus, this will only yield an optimum for our problem if $-H_{L_y}^{\theta}(\mu_\theta,\mu_\lambda^*)$ is indeed a valid covariance matrix. 
Symmetry will be given, as long as the model is at least two times continuously differentiable. Whether $-H_{L_y}^{\theta}(\mu_\theta,\mu_\lambda^*)$ will also be positive definite depends on our model. If the model is log-concave, then this assumption will be fulfilled. 
This may however not be given for all models of the above form. We will later see how this problem is typically circumvented in the literature by using an approximation to the Hessian that indeed will result in a positive-definite matrix.
In the following, we will proceed to use the expression $H_{L_y}^{\theta}(\mu_\theta,\mu_\lambda^*)$, referring to either the exact term or the respective approximation, such that the equations in question remain meaningful.
If we now  plug in this expression for $\Sigma_\theta$ in \eqref{objective}, we would be  left to optimize
\begin{equation}
E(\mu_\theta)  = L_y(\mu_\theta,\mu_\lambda^*) +  \frac{1}{2}   \operatorname{tr} [H_{L_y}^{\lambda}(\mu_\theta,\mu_\lambda^*)  \Sigma_\lambda^*] + \frac{1}{2} \ln \det (- H_{L_y}^{\theta}(\mu_\theta,\mu_\lambda^*)^{-1}).
\end{equation}
As stated before, the last term involving the determinant is usually dropped, so that what is left to optimize becomes
\begin{equation}
I(\mu_\theta) =L_y(\mu_\theta,\mu_\lambda^*) +  \frac{1}{2}   \operatorname{tr} [H_{L_y}^{\lambda}(\mu_\theta,\mu_\lambda^*)  \Sigma_\lambda^* ].
\end{equation}
Writing out the log-joint probability in the first term, $I(\mu_\theta)$ reads 
\begin{equation}
\begin{aligned}
-\frac{1}{2} (y-f(\mu_\theta))^T Q(\mu_\lambda^*)^{-1} (y-f(\mu_\theta)) -\frac{1}{2} (\mu_\theta-m_\theta)^T S_\theta^{-1}  (\mu_\theta-m_\theta) \\
-\frac{1}{2} (\mu_\lambda^*-m_\lambda)^T S_\lambda^*)  (\mu_\lambda^*-m_\lambda) +  \frac{1}{2}   \operatorname{tr} [H_{L,\mu_\theta}(\mu_\lambda^*) \Sigma_\lambda^*],
\end{aligned}
\end{equation}
up to constants. We note that we can drop any terms not involving $\mu_\theta$ for the optimization. In particular, for the Hessian in the last term we only have to compute the second derivatives of the log-likelihood term, due to the linearity of the trace and derivative operators.
Thus, we have that
\begin{equation}
\begin{aligned}
I(\mu_\theta) =
		    &-\frac{1}{2} \left(y-f\left(\mu_\theta\right)\right)^T Q\left(\mu_\lambda^*\right)^{-1} \left(y-f\left(\mu_\theta\right)\right) \\
		    & -\frac{1}{2} \left(\mu_\theta - m_\theta\right)^T S_\theta^{-1}\left(\mu_\theta - m_\theta\right) \\
		    & + \frac{1}{2} \operatorname{tr} \left(\Sigma_\lambda^*   \frac{\partial^2}{\partial\lambda \partial \lambda^T} \left(-\frac{1}{2} \left(y-f\left(\mu_\theta\right)\right)^T Q(\lambda)^{-1}\left(y-f\left(\mu_\theta\right)\right)\right) \bigg\rvert_{\lambda = \mu_\lambda^*}\right)+C
\end{aligned}
\end{equation}
The second derivative that one needs to compute here can be written out explicitly, as can be seen for example in \cite{friston2007variational}. After an optimal value $\mu_\theta^*$ is found, the covariance parameter is then set according to the previously stated analytic expression.
However, it is advocated \citep{friston2007variational,daunizeau2009variational} to approximate the Hessian involved in this step. We have that
\begin{equation}
\frac{\partial}{\partial \theta} (\ln p(y,\theta,\mu_\lambda^*)) = -\frac{1}{2} J_f(\theta)^T(Q(\mu_\lambda^*)^{-1} (y-f(\theta) -\frac{1}{2} S_\theta^{-1} (\theta-m_\theta),
\end{equation}
where $J_f$ is the Jacobian of the transform $f$.
Thus, the Hessian is 
\begin{equation}
H_{L_y}^{\theta}(\mu_\theta,\mu_\lambda^*)= -\frac{1}{2} ( J_f(\mu_\theta)^T Q(\mu_\lambda^*){-1}) J_f(\mu_\theta) + S_\theta^{-1})  + \delta^2_f,
\end{equation}
where we denoted by $\delta^2_f$ some term depending on the second derivatives of $f$ \citep{petersen}. This term is then proposed to drop, which may be motivated by the transform being only "slightly nonlinear"\citep{friston2007variational} or in order to "render the covariance matrix positive definite"\citep{daunizeau2009variational}. The latter follows from the fact that since by definition $Q(\mu_\lambda^*)^{-1}$ is a positive definite matrix, $A^T Q(\mu_\lambda^*)^{-1} A$ is  positive semidefinite for any matrix $A$. Since $S_\theta^{-1}$ is positive definite, this renders the sum positive definite.
With the same considerations as before, one can derive the update scheme for $\mu_\lambda,\Sigma_\lambda$. Here, one will be left to optimize
\begin{equation}
E(\mu_\theta,\Sigma_\theta) =   L_y(\mu_\theta^*,\mu_\lambda) + \frac{1}{2}   \operatorname{tr} [H_{L_y}^\theta(\mu_\lambda,\mu_\theta^*) \Sigma_\theta^*] + \frac{1}{2}   \operatorname{tr} [H_{L_y}^\lambda(\mu_\theta^*,\mu_\lambda) \Sigma_\lambda] + \frac{1}{2} \ln \det (\Sigma_\lambda) .
\end{equation}
If we assume that we can calculate the matrix derivative of this expression with respect to $\Sigma_\lambda$, it evaluates to 
\begin{equation}
\frac{\partial}{\partial \Sigma_\lambda}  E(\mu_\lambda,\Sigma_\lambda) = \frac{1}{2} ( H_{L_y}^\lambda(\mu_\theta^*,\mu_\lambda)  + \Sigma_\lambda^{-1}),
\end{equation}
which is zero iff
\begin{equation}
\Sigma_\lambda = - H_{L_y}^\lambda(\mu_\theta^*,\mu_\lambda)^{-1},
\end{equation}
again, provided that this yields a positive definite matrix.  Note that, in contrast to the previous case, in \citet{friston2007variational} it is discussed to choose the 
parameterization of the covariance matrix beforehand in a way that ensures the positive-definiteness of $- H_{L_y}^\lambda(\mu_\theta^*,\mu_\lambda)$ instead of using an approximation.
Again, one proceeds by optimizing
\begin{equation}
I(\mu_\lambda) =L_y(\mu_\theta^*,\mu_\lambda) +  \frac{1}{2}   \operatorname{tr} [H_{L_y}^\theta(\mu_\lambda;\mu_\theta^*) \Sigma_\theta^*]
\end{equation}
and then setting the covariance parameter as above. Again, for the second Hessian with respect to $\theta$ involved in this optimization, it is proposed to drop higher derivatives of $f$. 
Thus, one arrives at 
\begin{equation}
\begin{aligned}
I(\mu_\lambda)  = &-\frac{1}{2} \left(y-f\left(\mu_\theta^*\right)\right)^T Q\left(\mu_\lambda\right)^{-1} \left(y-f\left(\mu_\theta^*\right)\right) \\
&-\frac{1}{2} \ln \det Q(\mu_\lambda)\\
		    & -\frac{1}{2} \left(\mu_\lambda - m_\lambda\right)^T S_\lambda^{-1}\left(\mu_\lambda - m_\lambda\right) \\
		    & - \frac{1}{2} \operatorname{tr} \left(\Sigma_\theta^*   \left(J_f\left(\mu_\theta^*\right)\right)^T Q\left(\mu_\lambda\right)^{-1} J_f\left(\mu_\theta^*\right)  + S_\theta^{-1}\right) +C.
\end{aligned}
\end{equation}

\subsection{Asymptotics of Variational Inference}

\subsubsection{Assumptions in \citet{wang2019frequentist}}\label{sec:bleiresults}

Here we state the assumptions needed for the results presented by \cite{wang2019frequentist}.
The first assumption is an assumption on the prior. It assures that the prior is sufficiently smooth and puts some mass on a neighborhood of the true parameter $\theta_0$. Hence, it is required that
\begin{enumerate}
\item $p(\theta)$ is continuous on $\Theta$
\item  $p(\theta_0) > 0$
\item $\exists M_p > 0 ,\ s.t. \ \left\Vert \frac{\partial \ln p(\theta)}{\partial \theta \partial \theta ^T}\right\Vert \leq M_p e^{\left\Vert\theta\right\Vert^2} $
\end{enumerate}
The second assumption is a consistent testability assumption: It is required that for every $\varepsilon > 0$, there exists some sequence of tests $\phi_n$, such that 
\begin{equation} \begin{aligned}
&\int \phi_n(y) p(y|\theta_0) dy \longrightarrow 0 \\
&\underset {\theta : \left\Vert\theta-\theta_0\right\Vert \geq \varepsilon}{\sup} \int (1-\phi_n(y))  p(y|\theta) dy \longrightarrow 0 
\end{aligned}\end{equation} 
This assumption intuitively states that one can test the null hypothesis of $\theta = \theta_0$ against the alternative $\left\Vert \theta-\theta_0\right\Vert>0$ with uniform adequacy over all possible $\theta$.
For example, this assumption is satisfied if there exists a uniform (over $\theta)$) consistent estimator $ \hat{\theta}^{(n)}$ , with
\begin{equation}
\phi_n := \mathbb{I}[\left\Vert \hat{\theta}^{(n)}-\theta_0\right\Vert \geq \frac{\varepsilon}{2}].
\end{equation}
Then
\begin{equation} \begin{aligned}
&\int \phi_n(y) p(y|\theta_0) dy  = \mathbb{P}[\hat{\theta}^{(n)} - \theta_0 \geq \frac{\varepsilon}{2}] \longrightarrow 0 \\
&\underset {\theta : \left\Vert \theta-\theta_0\right\Vert \geq \varepsilon}{\sup} \int (1-\phi_n(y)) p(y|\theta) dy\\
&= \underset {\theta : \left\Vert \theta-\theta_0\right\Vert \geq \varepsilon}{\sup} \mathbb {P}_{\theta}[\left \Vert \hat{\theta}^{(n)} - \theta_0\right \Vert < \frac{\varepsilon}{2}] \\
&\leq \underset {\theta : \left\Vert \theta-\theta_0 \right\Vert \geq \varepsilon}{\sup} \mathbb {P}_{\theta}\left[\left\Vert \hat{\theta}^{(n)} - \theta\right\Vert + \left\Vert \theta-\theta_0\right\Vert < \frac{\varepsilon}{2}\right] \\
&= \underset {\theta : \left\Vert \theta-\theta_0\right\Vert \geq \varepsilon}{\sup} \mathbb {P}_{\theta}\left[ \left\Vert \theta-\theta_0\right\Vert - \frac{\varepsilon}{2}<\left\Vert \hat{\theta}^{(n)} - \theta \right\Vert \right] \\
&\leq \underset {\theta : \left\Vert \theta-\theta_0\right\Vert \geq \varepsilon}{\sup} \mathbb {P}_{\theta}\left[ \frac{\varepsilon}{2}<\left\Vert \hat{\theta}^{(n)} - \theta\right\Vert \right] \longrightarrow 0
\end{aligned}\end{equation} 
Using the triangle inequality and the fact that $\left\Vert \theta-\theta_0\right\Vert \geq \frac{\varepsilon}{2}$.
In \cite{van2000asymptotic} sufficient conditions on a model for the existence of such estimators are given.
The third assumption is a local asymptotic normality (LAN) condition: It is required that there exist some random vectors, bounded in probability $\Delta_{n,\theta_0}(Y)$ and a nonsingular matrix $V_{\theta_0}$ such that for any compact $K$ and any $\delta_n \longrightarrow 0$.
\begin {equation}
\underset{h \in K}{\sup} \left| \ln{p(Y|\theta + \delta_n h)} - \ln(p(Y|\theta) - h^T V_{\theta_0} \Delta_{n,\theta_0}(Y) + \frac{1}{2} h^T V_{\theta_0} h \right| \overset{\mathbb{P}}{\longrightarrow} 0 
\end{equation}
LAN is a general property of models that lies at the heart of many theoretic considerations in asymptotic statistics \citep{van2000asymptotic}. Roughly it states that the likelihood ratios of the true model and models based on small deviations in the neighborhood of the "local" true parameter behave as in a normal experiment based on a single observation. The random vectors $\Delta_n$ and the matrix $V_{\theta_0}$ are used to derive limit distributions of point estimators in such models.
Importantly, the LAN condition will be satisfied under typical regularity conditions that are assumed to prove the asymptotic normality of the maximum-likelihood-estimator in i.i.d. models.
For example, differentiability in quadratic mean of the model together with a local Lipschitz condition on the likelihood will be sufficient. 
Provided the asymptotic consistency of the MLE is given we have that:
\begin{equation}
\ln \frac{p(Y|\theta_0 + \frac{h}{\sqrt{n}})}{p(Y|\theta_0)} \overset{\mathbb{P}}{\longrightarrow} \frac{1}{\sqrt{n}} h \frac{\partial \ln p(Y|\theta)}{\partial \theta}(\theta_0) - \frac{1}{2} h^T \mathcal{I}(\theta_0) h
\end{equation}
Such that we get: 
\begin{equation}
\begin{aligned}
\Delta_{n,\theta_0}(Y) &=\frac{1}{\sqrt{n}}\mathcal{I}(\theta_0)^{-1} \frac{\partial \ln p(Y|\theta)}{\partial \theta}(\theta_0) \\
V_{\theta_0} &= \mathcal{I}(\theta_0)
\end{aligned}
\end{equation}
As stated in \citet{van2000asymptotic}, theorem 7.12, the convergence in probability also holds when the supremum over a compact set is taken.

\subsubsection {Underdispersion of Variational Approximations}\label{sec:entropy}
Here we show inequality \eqref{underdisp} in the special case of a diagonal covariance matrix, closely following the exposition of the inequality in \citet{wang2019frequentist}. The inequality actually does not depend on the particular form of the right hand side;
it will always hold true when approximating one Gaussian by a Gaussian with diagonal covariance matrix. Thus we will show it in this general setting.
Assume a fully-factorized Gaussian family, that is 
\begin{equation}
q(\theta) = \mathcal{N}(\theta,\mu_1, \Sigma_1);\Sigma^1 = \operatorname{diag}(\sigma_1,...,\sigma^1_p) 
\end{equation}
We proceed by evaluating the Kullback-Leibler divergence between densities of this form and a fixed density $\mathcal{N}(\theta,\mu_2,\Sigma_2)$,
\begin{equation} \begin{aligned}
&\operatorname{D_{KL}}(\mathcal{N}(\mu_1, \operatorname{diag}(\sigma_1,...,\sigma_p))||\mathcal{N}(\mu_2,\Sigma_2)) \\
&= -\frac{1}{2}(\ln\frac{\det(\operatorname{diag}(\sigma_1,...,\sigma_p))}{\det(\Sigma_{2})}+p) +\frac{1}{2}(\mu_1-\mu_2)^T \Sigma_{2}^{-1}(\mu_1-\mu_2) \\
&+ \frac{1}{2} \operatorname{tr}(\Sigma_{2}^{-1}\  \operatorname{diag}(\sigma_1,...,\sigma_p)),
\end{aligned}\end{equation} 
where we used the well-known closed-form expression for the Kullback-Leibler divergence between two Gaussians \citep{murphy2012machine}.
By the positive-definiteness of $\Sigma_{2}^{-1}$ we directly see that the minimum for the expectation parameter is $\mu_1 = \mu_2$.  Hence dropping terms that do not depend on $\Sigma_1$, and using the diagonal form we can get
\begin{equation} \begin{aligned}
& -\frac{1}{2}\ln\det(\operatorname{diag}(\sigma_1,...,\sigma_p)) + \frac{1}{2} \operatorname{tr}(\Sigma_{2}^{-1}\ \operatorname{diag}(\sigma_1,...,\sigma_p)) \\
& = \frac{1}{2} \sum_{j=1}^{p}  \Sigma_{2,jj}^{-1} \sigma_j -\ln \sigma_j,
\end{aligned}\end{equation} 
where $\Sigma_{2,jj}^{-1}$ is the $j$-th diagonal entry of $\Sigma_2^{-1}$. Calculating the derivatives with respect to the diagonal entries, we find the maxima as
\begin{equation} \begin{aligned}
&\frac{\partial}{\partial \sigma_j} \frac{1}{2} \sum_{j=1}^{p}  \Sigma_{2,jj}^{-1} \sigma_j -\ln \sigma_j = - \frac{1}{\sigma_j} + \Sigma_{2,jj}^{-1} \\
& - \frac{1}{\sigma_j} + \Sigma_{2,jj}^{-1} = 0 \iff  \sigma_j = \frac{1}{\Sigma_{2,jj}^{-1}}.
\end{aligned}\end{equation} 
This means that the 'closest' diagonal matrix in terms of Kullback-Leibler divergence has the diagonal elements given by the inverses of the diagonal elements of the fixed matrix.
Denoting the optimal diagonal covariance matrix as $\Sigma_1^* = \operatorname{diag}(\frac{1}{\Sigma_{2,11}^{-1}},...,\frac{1}{\Sigma_{2,pp}^{-1}})$, one gets that the best approximating density is given by
\begin{equation}
q^*(\theta) = \mathcal{N}(\theta;\mu_2,\Sigma_1^*).
\end{equation}
With this optimal member identified we can see that the choice of a diagonal density as an approximation will hence always result in less or equal entropy than the optimal density. This is because as a corollary of Hadamard's inequality one has that the determinant of a positive semi-definite matrix is bounded from above by the product of its diagonal entries \citep{rozanski2017more}, which gives us
\begin{equation} \begin{aligned}\label{proofunderdisp}
&\det({\Sigma_{2}^{-1}}) \leq \prod_j^p \Sigma_{2,jj}^{-1}  = \det(\Sigma_1^*)^{-1} \\
&\implies \det(\Sigma_1^*) \leq \det(\Sigma_{2}).
\end{aligned}\end{equation} 
Thus, using \eqref{Gaussianentropy} for the entropies we get 
\begin{equation}
\mathbb{H}[q^*(\theta) ] = \frac{1}{2} (\ln (2\pi) + p + \ln \det(\Sigma_1^*) \leq   \frac{1}{2} (\ln (2\pi) + p + \ln \det(\Sigma_2) = \mathbb{H}[\mathcal{N}(\theta,\mu_2,\Sigma_2)].
\end{equation}
This in particular shows inequality \eqref{underdisp}.

\subsubsection{Non-Equality of Variational Inference and MAP}\label{appendix:nonequivalence}

In this subsection we want to give an example of a situation where the MAP-estimate does not correspond the the maximizer of the $\operatorname{ELBO}$. To do so, we will consider a special case of our nonlinear transform model with a single data point,
\begin{equation}
\begin{aligned}
p(y|\theta) = \mathcal{N}(y; \exp(\theta),1) \\
p(\theta) = \mathcal{N}(\theta;m_\theta,s_\theta^2).
\end{aligned}
\end{equation}
The advantage of this model is, that the $\operatorname{ELBO}$ function $E(\mu_\theta,\sigma_\theta^2)$ can be evaluated directly. We will use the fact that for a normally distributed random variable 
$$\theta \sim \mathcal{N}(\theta;\mu,\sigma^2)$$
we can analytically evaluate the expectation of an exponential of this variable \citep{casella2002statistical}, that is
$$ \mathbb{E}[\exp(\theta)] = \exp(\mu + \frac{\sigma^2}{2}).$$
Using this, we get that the $\operatorname{ELBO}$ up to constants can be expressed as
\begin{equation}
E(\mu_\theta,\sigma_\theta^2)  = y \exp(\mu_\theta) \exp(\frac{1}{2}\sigma_\theta^2)  - \frac{1}{2}  \exp(2\mu_\theta) \exp(2\sigma_\theta^2) - \frac{1}{2} \frac{(\mu_\theta - m_\theta)^2}{s_\theta^2} - \frac{1}{2} \frac{\sigma_\theta^2}{s_\theta^2}  + \frac{1}{2} \ln \sigma_\theta^2.
\end{equation}
A maximizer of this function will have that the derivatives with respect to both arguments evaluate to zero. We calculate the derivatives as 
\begin{equation}
\begin{aligned}
&\frac{\partial E}{\partial \mu_\theta} (\mu_\theta,\sigma_\theta^2) =  y \exp(\mu_\theta)\exp(\frac{1}{2}\sigma_\theta^2)   - \exp(2\mu_\theta) \exp(2\sigma_\theta^2) - \frac{\mu_\theta}{s_\theta^2} + \frac{m_\theta}{s_\theta^2}\\
&\frac{\partial E}{\partial \sigma_\theta^2} (\mu_\theta,\sigma_\theta^2)  = \frac{y}{2} \exp(\mu_\theta)\exp(\frac{1}{2}\sigma_\theta^2)   - \exp(2\mu_\theta) \exp(2\sigma_\theta^2) - \frac{1}{2s_\theta^2} + \frac{1}{2\sigma_\theta^2}.
\end{aligned}
\end{equation}
Let us now on the other hand consider the MAP-estimate. to this end, we will also consider the derivative
\begin{equation}
\begin{aligned}
&\frac{\partial \ln p(y,\mu_\theta)}{\partial \mu_\theta} =  y \exp(\mu_\theta) - \exp(\mu_\theta)^2  - \frac{\mu_ \theta-m_\theta}{s_\theta^2}.
\end{aligned}
\end{equation}
A MAP estimate will fulfill that this derivative is equal to $0$. Now assume that we have some point $\mu_\theta,\sigma_\theta$, such that both   $\frac{\partial E}{\partial \mu_\theta} (\mu_\theta,\sigma_\theta^2) =  0 $ and $\frac{\partial \ln p(y,\mu_\theta)}{\partial \mu_\theta} = 0.$
This means in particular  that $\frac{\partial E}{\partial \mu_\theta} (\mu_\theta,\sigma_\theta^2)  = \frac{\partial \ln p(y,\mu_\theta)}{\partial \mu_\theta}$  and hence
\begin{equation}
\begin{aligned}
 & (y - y \exp(\frac{\sigma_\theta^2}{2}))   \exp(\mu_\theta) -  (\frac{1}{2} - \exp(\sigma_\theta^2)) \exp(\mu_\theta)^2 = 0 \\
&\iff (y - y \exp(\frac{\sigma_\theta^2}{2}))  -  (\frac{1}{2} - \exp(\sigma_\theta^2)) \exp(\mu_\theta) =0 \\
&\iff \exp(2\mu_\theta)  = \frac{y(1-\exp(\frac{\sigma_\theta^2}{2})}{\frac{1}{2} - \exp(\sigma_\theta^2)^2}.
\end{aligned}
\end{equation}
We note that the fraction on the right side will be nonnnegative, since both the enumerator and the denominator are nonpositive due to the fact that $\sigma_\theta^2 \in (0,\infty)$.  Thus, the above has no solution for all $y<0$ and hence we cannot simultaneously maximize $\ln p(y,\mu_\theta)$ and $E(\mu_\theta,\sigma_\theta^2)$.

\section{Auxiliary Material \& Notation} \label{sec:aux}

\subsection{Asymptotic Statistics}

\subsubsection{Probabilistic Landau Notation}\label{sec:onotation}

When studying limits, it may be convenient to characterize some of the involved terms only in terms of their limiting behaviour, without paying too much attention to their particular form. Landau notation enables this type of characterization. We will use symbols $o_\mathbb{P}(1)$ and $\mathcal{O}_{\mathbb{P}}(1)$.
These are used in the following way: Consider a sequence of random variables $x^{(n)}$. Then
$$ x^{(n)} = o_\mathbb{P}(1) \ \iff \  x^{(n)} \overset{\mathbb{P}}{\longrightarrow} 0 $$
and 
$$ x^{(n)} = O_\mathbb{P}(1) \ \iff  \ \forall \varepsilon \ \exists  \ M_\varepsilon, N_\varepsilon \ \text{s.t.} \ \forall n\geq N_\varepsilon  \ \mathbb{P}\left[ \left|x^{(n)}\right| > M_\varepsilon\right] < \varepsilon .$$
The latter is referred to as $x^{(n)}$ being bounded in probability, that is one can find a compact set such that all $x^{(n)}$ are in this compact set with arbitrarily high probability.

\subsubsection{Important Asymptotical Results in Probability Theory}
Here we state some well-known results in asymptotic probability theory that are used in our derivations. We will only state this results, proofs may be found in \cite {van2000asymptotic}.
\begin{theorem}[Continuous mapping theorem]
Suppose $x^{(n)}$ is a sequence of random variables, that converges to a random variable $x$. Let $g$ be a continuous function. Then $g(x^{(n)})$ converges to $g(x)$.
\end{theorem}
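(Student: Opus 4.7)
The statement as phrased is really three theorems in one, since \autoref{sec:freqcriteria} introduced three modes of stochastic convergence (almost sure, in probability, and in distribution). My plan is to treat each mode separately, since the almost sure and in-distribution cases fall out essentially immediately from the definitions, while only the in-probability case requires genuine work.

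First, the almost sure case is direct: by hypothesis there is a $\mathbb{P}$-null set $N$ outside of which $x^{(n)}(\omega) \to x(\omega)$ as real numbers. Pointwise continuity of $g$ at $x(\omega)$ then yields $g(x^{(n)}(\omega)) \to g(x(\omega))$ for every $\omega \notin N$, so $g(x^{(n)}) \overset{\text{a.s.}}{\longrightarrow} g(x)$.

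For the in-distribution case I would invoke the definition used in the excerpt: $x^{(n)} \overset{d}{\longrightarrow} x$ iff $\mathbb{E}[f(x^{(n)})] \to \mathbb{E}[f(x)]$ for every continuous, bounded $f$. Given such an $f$, the composition $f \circ g$ is continuous (as a composition of continuous functions) and is bounded by the same bound as $f$, so applying the definition to $f \circ g$ yields $\mathbb{E}[f(g(x^{(n)}))] \to \mathbb{E}[f(g(x))]$, which is precisely $g(x^{(n)}) \overset{d}{\longrightarrow} g(x)$.

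The in-probability case is the main obstacle, because continuity of $g$ alone gives only a modulus depending on the base point rather than a uniform one. The plan is to localize $x$ to a compact set on which uniform continuity is available. Fix $\varepsilon > 0$ and $\eta > 0$. First choose $M$ large enough that $\mathbb{P}[|x| > M] < \eta/2$, which is possible because $x$ is almost surely finite; on the compact interval $[-M-1, M+1]$, the Heine-Cantor theorem supplies a $\delta > 0$ such that $|g(y) - g(z)| < \varepsilon$ whenever $|y-z| < \delta$ and both $y,z$ lie in this interval. Now observe that on the event $\{|x| \leq M\} \cap \{|x^{(n)} - x| < \min(1,\delta)\}$ both $x$ and $x^{(n)}$ automatically lie in $[-M-1, M+1]$ at distance less than $\delta$, hence $|g(x^{(n)}) - g(x)| < \varepsilon$. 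Taking complements and applying the union bound gives
\[
\mathbb{P}\!\left[|g(x^{(n)}) - g(x)| > \varepsilon\right] \leq \mathbb{P}[|x| > M] + \mathbb{P}\!\left[|x^{(n)} - x| \geq \min(1,\delta)\right],
\]
whose second summand tends to zero by convergence in probability. Choosing $n$ large enough therefore makes the right-hand side at most $\eta$, establishing $g(x^{(n)}) \overset{\mathbb{P}}{\longrightarrow} g(x)$ since $\eta$ was arbitrary.
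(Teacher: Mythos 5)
Your proposal is correct, and in fact it goes further than the paper does: the paper does not prove this theorem at all, but simply states it among the auxiliary results and defers the proof to \cite{van2000asymptotic}, so there is no in-paper argument to compare against. Your three-part treatment is the standard self-contained route. The almost sure case is indeed immediate by composition on the complement of the null set. The in-distribution case is handled cleanly by testing against $f \circ g$, which is legitimate precisely because the paper defines convergence in distribution through bounded continuous test functions and $g$ is assumed continuous everywhere; note that the textbook version in van der Vaart (Theorem 2.3) is slightly more general, requiring $g$ to be continuous only on a set that carries the full law of $x$, and is proved there via the portmanteau characterization with closed sets rather than by composition. Your in-probability argument — truncating $x$ to a compact interval using its a.s.\ finiteness, invoking Heine--Cantor for a uniform $\delta$, enlarging the interval by $1$ so that $x^{(n)}$ is also captured on the good event, and finishing with the union bound — is exactly the standard tightness argument and is carried out correctly, including the inclusion of $\{|g(x^{(n)})-g(x)|>\varepsilon\}$ in the union of the two bad events. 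One incidental remark: the paper's own definition of convergence in probability contains a typo ($\mathbb{P}[|x^{(n)}-x|<\varepsilon]\to 0$ where the event should be $|x^{(n)}-x|\geq\varepsilon$); you correctly work with the standard definition, which is the intended one.
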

Here, the convergence may be in distribution, in probability or almost surely. This result will come handy when determining the probabilistic limits of point estimators. As a consequence of this theorem, one has 
\begin{theorem}[Slutsky's lemma]
Suppose $x^{(n)}$,$y^{(n)}$ are sequences of random variables, such that $x^{(n)} \overset{d}{\longrightarrow} x$, where $x$ is some random variable, and $y^{(n)} \overset{\mathbb{P}}{\longrightarrow} c$, where c is some constant. Then $$ x^{(n)} + y^{(n)} \overset{d}{\longrightarrow} x + c$$, $$ x^{(n)} y^{(n)} \overset{d}{\longrightarrow} xc$$ and $$ x^{(n)}/y^{(n)} \overset{d}{\longrightarrow} x/c ; \ c\neq 0.$$
If $x$ is constant, the convergence also holds in probability.
\end{theorem}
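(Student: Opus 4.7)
The plan is to reduce Slutsky's lemma to the continuous mapping theorem by first upgrading the hypotheses to joint convergence in distribution. Specifically, I would first show that the pair converges jointly,
\begin{equation*}
(x^{(n)}, y^{(n)}) \overset{d}{\longrightarrow} (x, c),
\end{equation*}
and then apply the continuous mapping theorem to the continuous maps $(a,b)\mapsto a+b$, $(a,b)\mapsto ab$, and $(a,b)\mapsto a/b$. The last map is continuous on the open set $\{b\neq 0\}$, which carries full mass under the limit $(x,c)$ whenever $c\neq 0$, so the continuous mapping theorem still applies (using the version that only requires continuity almost everywhere with respect to the limit distribution).

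To establish joint convergence I would test against an arbitrary bounded continuous function $f:\mathbb{R}^2\to\mathbb{R}$ and decompose
\begin{equation*}
\mathbb{E}[f(x^{(n)},y^{(n)})] - \mathbb{E}[f(x,c)] = \bigl(\mathbb{E}[f(x^{(n)},y^{(n)})] - \mathbb{E}[f(x^{(n)},c)]\bigr) + \bigl(\mathbb{E}[f(x^{(n)},c)] - \mathbb{E}[f(x,c)]\bigr).
\end{equation*}
The second bracket tends to zero by the assumed marginal convergence $x^{(n)}\overset{d}{\longrightarrow}x$ applied to the bounded continuous function $a\mapsto f(a,c)$. The first bracket I would control by a standard compactness-plus-uniform-continuity argument: choose a compact interval $K$ with $\mathbb{P}[x\in K^c]<\varepsilon$ (so $\mathbb{P}[x^{(n)}\in K^c]<2\varepsilon$ eventually, since $x^{(n)}\overset{d}{\longrightarrow}x$), restrict $f$ to the compact set $K\times[c-1,c+1]$ where it is uniformly continuous, and then use $y^{(n)}\overset{\mathbb{P}}{\longrightarrow}c$ to make $|f(x^{(n)},y^{(n)})-f(x^{(n)},c)|$ small with high probability. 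Since $f$ is bounded, the remaining low-probability event contributes at most $O(\varepsilon)$.

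Once joint convergence is in hand, the three conclusions follow immediately: $x^{(n)}+y^{(n)} \overset{d}{\longrightarrow} x+c$, $x^{(n)}y^{(n)}\overset{d}{\longrightarrow} xc$, and (using that $\mathbb{P}[c=0]=0$ when $c\neq 0$) $x^{(n)}/y^{(n)}\overset{d}{\longrightarrow}x/c$. For the final claim, when $x$ is itself a constant $a$, each limit $a+c$, $ac$, $a/c$ is deterministic, and convergence in distribution to a constant is well known to be equivalent to convergence in probability, which gives the strengthening.

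The main obstacle is the first step, specifically handling the interaction between the two different modes of convergence in the cross term $\mathbb{E}[f(x^{(n)},y^{(n)})] - \mathbb{E}[f(x^{(n)},c)]$; care is needed because $f$ is only continuous, not uniformly continuous on $\mathbb{R}^2$, so one must first tighten $x^{(n)}$ to a compact set before invoking uniform continuity. The rest is a direct application of tools already stated in the paper (the continuous mapping theorem above).
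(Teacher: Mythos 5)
Your proposal is correct and follows essentially the route the paper itself indicates: the paper gives no proof of its own (it defers to van der Vaart and simply remarks that Slutsky's lemma is a consequence of the continuous mapping theorem), and your argument — joint convergence $(x^{(n)},y^{(n)})\overset{d}{\longrightarrow}(x,c)$ via tightness plus uniform continuity on compacts, followed by the continuous mapping theorem — is exactly that standard derivation. The only minor caveat is that for the quotient you need the version of the continuous mapping theorem allowing discontinuities on a set of limit-measure zero (the paper states it only for everywhere-continuous $g$), which you correctly flag and handle.
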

Arguably the best known asymptotic result in statistics is the law of large numbers, giving the convergence of the sample mean to the expected value of the underlying distribution. Formally, we have
\begin{theorem}[Strong law of large numbers]
Let $x_1,...,x_n$ be independent, identically distributed (vector-valued) random variables. Suppose $\mathbb{E}\left[\left|x_1\right|\right]<\infty$. Let $ \overline{x}^{(n)} : = \frac{\sum\limits_{i=1}^{n} x_i}{n}$  Then $$ \overline{x}^{(n)} \overset{\text{a.s.}}{\longrightarrow} \mathbb{E}\left[x_1\right],$$
\end{theorem}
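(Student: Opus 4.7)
The plan is to follow Etemadi's classical approach, which sidesteps the need for independence (pairwise independence suffices) and only assumes a first moment. First I would reduce the vector-valued statement to the scalar case: since almost sure convergence is preserved under continuous mappings and $|\cdot|$ on $\mathbb{R}^d$ is equivalent to component-wise convergence, it suffices to establish the result for each real-valued coordinate sequence. Then I would further reduce to nonnegative summands by writing $x_k = x_k^+ - x_k^-$, noting that both $(x_k^+)$ and $(x_k^-)$ are i.i.d.\ sequences with finite mean whenever $(x_k)$ is, and using linearity.

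So assume now $x_k \geq 0$ with $\mu := \mathbb{E}[x_1] < \infty$. The key device is truncation: set $Y_k := x_k \mathbf{1}_{\{x_k \leq k\}}$ and $T_n := \sum_{k=1}^n Y_k$. I would first show $\sum_{k=1}^\infty \mathbb{P}(x_k \neq Y_k) = \sum_{k=1}^\infty \mathbb{P}(x_1 > k) \leq \mathbb{E}[x_1] < \infty$, so by Borel--Cantelli only finitely many $x_k$ differ from $Y_k$ almost surely, meaning $\tfrac{1}{n}(S_n - T_n) \to 0$ a.s.\ where $S_n = \sum_{k=1}^n x_k$. Next I would verify $\mathbb{E}[Y_k] \to \mu$ by monotone convergence, so the Cesàro average $\tfrac{1}{n}\sum_{k=1}^n \mathbb{E}[Y_k] \to \mu$ as well, and it remains to show $\tfrac{1}{n}(T_n - \mathbb{E}[T_n]) \to 0$ a.s.

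For the fluctuation control, I would fix $\alpha > 1$ and consider the geometric subsequence $k_n := \lfloor \alpha^n \rfloor$. A Chebyshev bound gives
\begin{equation}
\mathbb{P}\Bigl(\bigl|T_{k_n} - \mathbb{E}[T_{k_n}]\bigr| > \varepsilon k_n \Bigr) \leq \frac{1}{\varepsilon^2 k_n^2} \sum_{k=1}^{k_n} \operatorname{Var}(Y_k).
\end{equation}
Summing this over $n$ and swapping the order of summation, the key estimate $\sum_{n : k_n \geq k} k_n^{-2} \leq C \alpha^{-2} k^{-2}$ for some constant $C$ together with the bound $\sum_{k=1}^\infty k^{-2}\operatorname{Var}(Y_k) \leq 2\mathbb{E}[x_1]$ (which follows from $\operatorname{Var}(Y_k) \leq \mathbb{E}[Y_k^2] = \int_0^k 2t\,\mathbb{P}(x_1 > t)\,dt$ and Fubini) gives summability. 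Borel--Cantelli then yields $\tfrac{1}{k_n}(T_{k_n} - \mathbb{E}[T_{k_n}]) \to 0$ a.s.

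To interpolate from the subsequence to the full sequence, I would exploit monotonicity of $T_n$ in $n$ (since $Y_k \geq 0$): for any $m$ with $k_n \leq m < k_{n+1}$,
\begin{equation}
\frac{k_n}{k_{n+1}} \cdot \frac{T_{k_n}}{k_n} \leq \frac{T_m}{m} \leq \frac{k_{n+1}}{k_n} \cdot \frac{T_{k_{n+1}}}{k_{n+1}},
\end{equation}
and both outer quantities converge a.s.\ to $\mu/\alpha$ and $\alpha\mu$ respectively. Letting $\alpha \downarrow 1$ through a countable sequence pins down the limit as $\mu$. Combined with the earlier reductions, this yields the a.s.\ convergence $\overline{x}^{(n)} \to \mathbb{E}[x_1]$. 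The main obstacle I anticipate is not any single step but the careful bookkeeping of the interpolation argument and verifying the summability estimate $\sum_k k^{-2}\operatorname{Var}(Y_k) < \infty$, which is the technical heart that allows truncation at the diagonal level $k$ rather than at a slower rate.
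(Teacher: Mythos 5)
Your proposal is correct and is essentially the paper's own route: the paper does not prove this theorem itself but refers the reader to Etemadi (1981), whose elementary argument via truncation at level $k$, the summability estimate $\sum_k k^{-2}\operatorname{Var}(Y_k)<\infty$, Chebyshev plus Borel--Cantelli along the geometric subsequence $\lfloor\alpha^n\rfloor$, and monotone interpolation with $\alpha\downarrow 1$ is exactly what you outline (including the reduction to nonnegative scalar variables and the observation that pairwise independence suffices). No substantive gaps; only the exact constant in the bound $\sum_k k^{-2}\operatorname{Var}(Y_k)\leq 2\,\mathbb{E}[x_1]$ is slightly optimistic, but finiteness is all that is needed.
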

a proof can be found in \cite{etemadi1981elementary}.
Of great importance are central-limit theorems \citep{van2000asymptotic}. Here, we will make use of the following version 
\begin{theorem}[Central limit theorem]
Let $x_1,...,x_n$ be  independent, identically distributed (vector-valued) random variables with mean $\mu \in \mathbb{R}^k$ and  covariance $\Sigma \in \mathbb{R}^{k \times k}$.  Let $ \overline{x}^{(n)} : = \frac{\sum\limits_{i=1}^{n} x_i}{n}$. Then we have $$\sqrt{n}\left( \overline{x}^{(n)}- \mu \right) \overset{d}{\longrightarrow} \mathcal{N}(0,\Sigma)$$
\end{theorem}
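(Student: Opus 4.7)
The plan is to prove this standard multivariate central limit theorem via characteristic functions and Lévy's continuity theorem, which is arguably the cleanest route and also the most widely used approach in textbooks such as \citet{van2000asymptotic}. First I would reduce to the centered case: setting $y_i := x_i - \mu$, the random variables $y_i$ are i.i.d.\ with mean zero and covariance $\Sigma$, and $\sqrt{n}(\overline{x}^{(n)} - \mu) = \frac{1}{\sqrt{n}} \sum_{i=1}^n y_i$. It thus suffices to show that $\frac{1}{\sqrt{n}}\sum_{i=1}^n y_i \overset{d}{\longrightarrow} \mathcal{N}(0,\Sigma)$.

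Next I would introduce the characteristic function $\phi(t) = \mathbb{E}[\exp(i t^T y_1)]$ for $t \in \mathbb{R}^k$. Because $y_1$ has finite second moments, $\phi$ is twice continuously differentiable at the origin, and a second-order Taylor expansion (together with $\mathbb{E}[y_1] = 0$ and $\mathbb{E}[y_1 y_1^T] = \Sigma$) gives
\begin{equation}
\phi(t) = 1 - \tfrac{1}{2} t^T \Sigma t + o(\|t\|^2) \qquad \text{as } t \to 0.
\end{equation}
By independence, the characteristic function of the normalized sum satisfies
\begin{equation}
\phi_n(t) := \mathbb{E}\left[ \exp\left(i t^T \tfrac{1}{\sqrt{n}} \sum_{i=1}^n y_i \right)\right] = \phi(t/\sqrt{n})^n.
\end{equation}
Substituting the Taylor expansion at the shrinking argument $t/\sqrt{n}$ yields
\begin{equation}
\phi_n(t) = \left(1 - \tfrac{1}{2n} t^T \Sigma t + o(1/n)\right)^n \longrightarrow \exp\left(- \tfrac{1}{2} t^T \Sigma t\right),
\end{equation}
which is precisely the characteristic function of $\mathcal{N}(0, \Sigma)$.

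Finally, I would invoke Lévy's continuity theorem: pointwise convergence of characteristic functions to a function that is continuous at the origin implies convergence in distribution, so $\frac{1}{\sqrt{n}} \sum_{i=1}^n y_i \overset{d}{\longrightarrow} \mathcal{N}(0,\Sigma)$, which establishes the claim.

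The main technical obstacles are twofold. First, the passage from $\phi(t/\sqrt{n})^n$ to the exponential limit must be justified carefully, since $\phi$ is complex-valued and one cannot naively take logarithms; the standard remedy is to use the elementary inequality $|a^n - b^n| \leq n \max(|a|,|b|)^{n-1} |a-b|$ applied to $a = \phi(t/\sqrt{n})$ and $b = 1 - \frac{t^T \Sigma t}{2n}$, combined with the elementary limit $(1 - c/n + o(1/n))^n \to e^{-c}$ for fixed $c \in \mathbb{C}$. Second, the quadratic Taylor expansion of $\phi$ at the origin has to be justified in the multivariate setting under the mere assumption of a finite covariance matrix; this follows from applying dominated convergence to the difference quotients of $\exp(i t^T y_1)$, using $|e^{i s} - 1 - is + s^2/2| \leq \min(s^2, |s|^3/6)$ as a uniform bound. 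Once these two ingredients are in place, the rest of the proof is essentially bookkeeping.
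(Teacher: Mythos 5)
Your proposal is correct, but note that the paper does not actually prove this statement: it is listed among the auxiliary results in the appendix, stated without proof and deferred to \citet{van2000asymptotic}, so there is no internal argument to compare against. Your route -- center the variables, expand the characteristic function $\phi(t)=\mathbb{E}[\exp(it^Ty_1)]$ to second order using the finite covariance, pass to $\phi(t/\sqrt{n})^n \to \exp(-\tfrac{1}{2}t^T\Sigma t)$, and conclude by L\'evy's continuity theorem -- is the standard textbook argument, and your handling of the two delicate points is sound: the inequality $|a^n-b^n|\leq n\max(|a|,|b|)^{n-1}|a-b|$ applies since $|\phi(t/\sqrt{n})|\leq 1$ and $|1-\tfrac{1}{2n}t^T\Sigma t|\leq 1$ for $n$ large, and the bound $|e^{is}-1-is+\tfrac{s^2}{2}|\leq\min(s^2,|s|^3/6)$ together with dominated convergence justifies the quadratic expansion under only a second-moment assumption. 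For comparison, the reference the paper cites obtains the multivariate statement by the Cram\'er--Wold device, reducing to the one-dimensional CLT for the projections $t^Tx_i$; that reduction lets one reuse the scalar result verbatim, whereas your direct multivariate expansion is self-contained at the cost of carrying vector notation through the Taylor step. Either way the argument is complete and fit for purpose here, since the paper only invokes the theorem as a black box.
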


\subsubsection{Asymptotics of M- and Z-Estimators}
Furthermore, we will use two theorems that provide consistency of estimators that are only implicitly defined as the maximizers (M-estimators) or roots (Z-estimators) of certain equations. For a proof of the following theorems, see \cite{van2000asymptotic}.
\begin{theorem}\label{M-estimator}
Consider a random function $M^{(n)}(\theta,y)$ and a deterministic function $M(\theta)$ such that $$ \underset{\theta \in \Theta}{\sup} \left\Vert M^{(n)}(\theta,y) - M(\theta) \right\Vert \overset{\mathbb{P}}{\longrightarrow} $$ and for all $\varepsilon >0$, 
$$ \underset{\theta : d(\theta,\theta_0) \geq \varepsilon}{\sup} M(\theta) < M(\theta_0).$$
Let $\hat{\theta}^{(n)}$ be an estimator, such that $$M^{(n)}(\hat{\theta}^{(n)},y) \geq M^{(n)}(\theta_0,y) - R^{(n)},$$
with $R^{(n)} = o_{\mathbb{P}}(1).$ Then we have consistency, $$\hat{\theta}^{(n)} \overset{\mathbb{P}}{\longrightarrow} \theta_0.$$
\end{theorem}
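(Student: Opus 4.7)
The plan is to argue by contradiction on the event that $\hat{\theta}^{(n)}$ lies outside an $\varepsilon$-ball around $\theta_0$, and to show that this event forces the uniform approximation error or the near-maximizer slack to be bounded below by a fixed positive quantity, which has vanishing probability. Fix $\varepsilon>0$. By the well-separatedness condition, the quantity
\begin{equation*}
\delta_\varepsilon := M(\theta_0) - \sup_{\theta\,:\,d(\theta,\theta_0)\,\geq\,\varepsilon} M(\theta)
\end{equation*}
is strictly positive. I would then work on the event $A_n := \{d(\hat{\theta}^{(n)},\theta_0) \geq \varepsilon\}$, which is exactly the event whose probability we want to push to zero.

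The core step is a short chain of inequalities. On $A_n$, one has by definition of $\delta_\varepsilon$ that $M(\hat{\theta}^{(n)}) \leq M(\theta_0) - \delta_\varepsilon$. On the other hand, writing $U_n := \sup_{\theta \in \Theta}\|M^{(n)}(\theta,y) - M(\theta)\|$, the near-maximizer property and the triangle inequality give
\begin{equation*}
M(\hat{\theta}^{(n)}) \geq M^{(n)}(\hat{\theta}^{(n)},y) - U_n \geq M^{(n)}(\theta_0,y) - R^{(n)} - U_n \geq M(\theta_0) - 2U_n - R^{(n)}.
\end{equation*}
Combining the upper and lower bounds for $M(\hat{\theta}^{(n)})$ on $A_n$ yields $2U_n + R^{(n)} \geq \delta_\varepsilon$. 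Hence $A_n \subseteq \{2U_n + R^{(n)} \geq \delta_\varepsilon\}$.

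To conclude, I would invoke the hypotheses $U_n = o_{\mathbb{P}}(1)$ (uniform convergence) and $R^{(n)} = o_{\mathbb{P}}(1)$, which together with the continuous mapping theorem give $2U_n + R^{(n)} \overset{\mathbb{P}}{\longrightarrow} 0$. Therefore
\begin{equation*}
\mathbb{P}[A_n] \leq \mathbb{P}\!\left[2U_n + R^{(n)} \geq \delta_\varepsilon\right] \longrightarrow 0,
\end{equation*}
which is exactly $\hat{\theta}^{(n)} \overset{\mathbb{P}}{\longrightarrow} \theta_0$ since $\varepsilon$ was arbitrary. There is no genuinely hard step here; the only subtle point is keeping the two approximation errors ($U_n$ from uniform convergence and $R^{(n)}$ from the near-maximizer slack) cleanly separated so that each is ultimately absorbed by the fixed positive gap $\delta_\varepsilon$ provided by the well-separated maximum. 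A minor implicit assumption to note is the measurability of $U_n$; if this is a concern one can replace $\mathbb{P}$ by outer probability without changing the argument.
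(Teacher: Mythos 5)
Your proof is correct; the paper does not supply its own argument for this theorem but defers to \cite{van2000asymptotic} (Theorem 5.7), and your chain of inequalities---absorbing the uniform approximation error $U_n$ (used twice) and the slack $R^{(n)}$ into the well-separation gap $\delta_\varepsilon$ on the event $\{d(\hat{\theta}^{(n)},\theta_0)\geq\varepsilon\}$---is exactly the standard argument given there. Nothing further is needed beyond your (appropriate) remark on measurability/outer probability.
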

The second theorem is very similar, and follows from the first, but for convenience and to align later notation, we state it here as a separate theorem.
\begin{theorem}\label{Z-estimator}
Consider a random function $\Psi^{(n)}(\theta,y)$ and a deterministic function $\Psi(\theta)$ with $\Psi(\theta_0) = 0$,  such that $$ \underset{\theta \in \Theta}{\sup} \left\Vert \Psi^{(n)}(\theta,y) - \Psi(\theta)\right\Vert \overset{\mathbb{P}}{\longrightarrow} $$ and for all $\varepsilon >0$, 
$$ \underset{\theta : d(\theta,\theta_0) \geq \varepsilon}{\inf} \left\Vert \Psi(\theta)\right\Vert >  0.$$
Let $\hat{\theta}^{(n)}$ be an estimator, such that $$\Psi^{(n)}(\hat{\theta}^{(n)},y) = R^{(n)},$$
with $R^{(n)} = o_{\mathbb{P}}(1).$ Then we have consistency, $$\hat{\theta}^{(n)}  \overset{\mathbb{P}}{\longrightarrow} \theta_0.$$
\end{theorem}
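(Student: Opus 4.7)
The plan is to reduce Theorem \ref{Z-estimator} to Theorem \ref{M-estimator} by turning the root-finding problem into a maximization problem. Specifically, I would define
$$M^{(n)}(\theta,y) := -\left\Vert \Psi^{(n)}(\theta,y) \right\Vert, \qquad M(\theta) := -\left\Vert \Psi(\theta)\right\Vert,$$
so that $M(\theta_0) = 0$ is the unique supremum of $M$, and a ``near-zero'' of $\Psi^{(n)}$ becomes a ``near-maximizer'' of $M^{(n)}$. The bulk of the argument is then just checking that the three hypotheses required by Theorem \ref{M-estimator} transfer under this transformation.

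First, I would verify uniform convergence $\sup_{\theta\in\Theta}\lvert M^{(n)}(\theta,y) - M(\theta)\rvert \overset{\mathbb{P}}{\longrightarrow} 0$. By the reverse triangle inequality on the norm, $\bigl\lvert \lVert \Psi^{(n)}(\theta,y)\rVert - \lVert \Psi(\theta)\rVert\bigr\rvert \leq \lVert \Psi^{(n)}(\theta,y) - \Psi(\theta)\rVert$ pointwise in $\theta$, so taking suprema preserves the probabilistic convergence assumed in the hypothesis. Second, I would check well-separation: for every $\varepsilon > 0$,
$$\sup_{\theta:\,d(\theta,\theta_0)\geq \varepsilon} M(\theta) = -\inf_{\theta:\,d(\theta,\theta_0)\geq \varepsilon} \lVert \Psi(\theta)\rVert < 0 = M(\theta_0),$$
which is exactly the strict inequality required, using the assumption $\inf_{\theta:d(\theta,\theta_0)\geq\varepsilon}\lVert \Psi(\theta)\rVert > 0$.

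The one slightly non-routine step is verifying the ``near-maximizer'' condition, i.e.\ showing $M^{(n)}(\hat\theta^{(n)},y) \geq M^{(n)}(\theta_0,y) - \tilde R^{(n)}$ with $\tilde R^{(n)} = o_{\mathbb{P}}(1)$. By assumption $M^{(n)}(\hat\theta^{(n)},y) = -\lVert R^{(n)}\rVert = o_{\mathbb{P}}(1)$ since $R^{(n)} = o_{\mathbb{P}}(1)$. For $M^{(n)}(\theta_0,y) = -\lVert \Psi^{(n)}(\theta_0,y)\rVert$, I would use the uniform convergence at the single point $\theta_0$ together with $\Psi(\theta_0) = 0$ to conclude $\lVert \Psi^{(n)}(\theta_0,y)\rVert \overset{\mathbb{P}}{\longrightarrow} 0$, so $M^{(n)}(\theta_0,y) = o_{\mathbb{P}}(1)$ as well. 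Setting $\tilde R^{(n)} := \lvert M^{(n)}(\hat\theta^{(n)},y) - M^{(n)}(\theta_0,y)\rvert = o_{\mathbb{P}}(1)$ by the triangle inequality closes the gap.

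Having established all three hypotheses, Theorem \ref{M-estimator} applies directly and yields $\hat{\theta}^{(n)} \overset{\mathbb{P}}{\longrightarrow} \theta_0$. The main obstacle here is purely notational bookkeeping: making sure the norm-based reduction preserves probabilistic convergence uniformly (which the reverse triangle inequality handles cleanly) and that the remainder term absorbed into the M-estimator form retains the $o_{\mathbb{P}}(1)$ behaviour. No deeper analytic input is needed beyond what Theorem \ref{M-estimator} already provides.
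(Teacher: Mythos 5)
Your proposal is correct and is essentially the same argument the paper relies on: the paper states that the Z-estimator theorem ``follows from the first'' (the M-estimator theorem, citing \cite{van2000asymptotic}), and the standard reduction there is exactly your choice $M^{(n)}(\theta,y) = -\Vert \Psi^{(n)}(\theta,y)\Vert$, $M(\theta) = -\Vert\Psi(\theta)\Vert$, with the reverse triangle inequality for uniform convergence and the near-maximizer check at $\theta_0$. Your verification of the $o_{\mathbb{P}}(1)$ remainder is sound, so nothing is missing.
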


\subsection{Multivariate Calculus} 

\subsubsection{Notation}\label{sec:mvnotation}

In the following, we will explain notation related to multivariate derivatives that will be used in the main text. Consider a differentiable function 
\begin{equation}
\begin{aligned}
f: \mathbb{R}^m &\to \mathbb{R}^n \\
x = (x_1,...,x_m) &\mapsto \left(f_1(x),...,f_n(x)\right).
\end{aligned}
\end{equation}
In this situation, the Jacobian of $f$ at $a$ is defined as
\begin{equation}
J_f(a) := \begin{pmatrix} \frac{\partial f_1}{\partial x_1} (a) & \cdots &  \frac{\partial f_1}{\partial x_m}(a) \\ \vdots & \ddots &  \vdots \\ \frac{\partial f_n}{\partial x_1} (a) & \cdots &  \frac{\partial f_n}{\partial x_m}(a) \end{pmatrix}.
\end{equation}
In the case that $n=1$ and $f$ is two times differentiable we define the Hessian matrix at $a$ in the following way
\begin{equation}
H_f(a): = \begin{pmatrix} \frac{\partial^2 f}{\partial x_1 \partial x_1} (a) & \cdots & \frac{\partial^2 f}{\partial x_1 \partial x_m} (a) \\ \vdots & \ddots & \vdots \\ \frac{\partial^2 f}{\partial x_m \partial x_1} (a) & ... & \frac{\partial^2 f}{\partial x_m \partial x_m} (a) \end{pmatrix}.
\end{equation}
Note that for the two-times continuously differentiable functions we consider here we have that $H_f(a) = H_f(a)^T$, by the Schwarz theorem. 
In this work, we also repeatedly are in the situation where we want to refer to quadratic subparts of the Hessian. Consider a partition 
\begin{equation}
\{1,...,m\} = \bigcup\limits_{j=1}^{l} \{ i_{j,1},...,i_{j,d_j} \}
\end{equation} 
and correspondingly denote
\begin{equation}
\boldsymbol{x}_j = (x_{i_{j,1}},...,x_{i_{j,d_j}}) 
\end{equation}
then we will denote the quadratic subpart of the Hessian comprising only coordinates in the $j$-th set as
\begin{equation}
H_f^{\boldsymbol{x}_j}(a) = \begin{pmatrix} \frac{\partial^2 f}{\partial x_{i_{j,1}} \partial x_{i_{j,1}}} (a) & \cdots & \frac{\partial^2 f}{\partial x_1 \partial x_{i_{j,d_j}}} (a) \\ \vdots & \ddots & \vdots \\ \frac{\partial^2 f}{\partial x_{i_{j,d_j}} \partial x_{i_{j,1}}} (a) & ... & \frac{\partial^2 f}{\partial x_{i_{j,d_j}} \partial x_{i_{j,d_j}}} (a) \end{pmatrix}.
\end{equation}
In the main text we will omit using bold-faced symbols, that were used here to distinguish between coordinate subscripts and partition subscripts.  If not stated otherwise, subscripts of model parameters will always refer to partition membership, not necessarily to single coordinates.
When it serves the purpose of clarity, we will also use the more explicit notation in terms of partial-derivatives. That is, we have
\begin{equation}
\frac{\partial f}{\partial {\boldsymbol{x}_j} }(a) = \frac{\partial f(x)}{\partial {\boldsymbol{x}_j} }\bigg \rvert_{a} = \left(\frac{\partial f}{\partial x_{i_{j,1}}}(a),...,\frac{\partial f}{\partial x_{i_{j,d_j}}}(a) \right)^T
\end{equation} 
and 
\begin{equation}
\frac{\partial^2 f}{\partial {\boldsymbol{x}_j} \partial {\boldsymbol{x}_j}^T  }(a) = \frac{\partial^2 f(x)}{\partial {\boldsymbol{x}_j} \partial {\boldsymbol{x}_j}^T  }\bigg \rvert_{a} =H_f^{\boldsymbol{x}_j}(a).
\end{equation} 

\subsubsection{Matrix Derivatives}

In a great part of this work we deal with the general case of variational Laplace in a multivariate setting. In this section we state results 
In particular, derivations of variational Laplace involve differentiating expressions with respect to a matrix. Here, we state two important results that are necessary for these derivations. Let $X$ be any matrix. Then we have
\begin{equation}\label{derivtrace}
\frac{\partial \operatorname{tr}(XA)}{\partial X} = A^T,
\end{equation}
for any matrix $A$ such that  the dimensions allow the above expression; see also \cite{petersen}, equation 100.
Furthermore,for an invertible Matrix $X$ we have that
\begin{equation}\label{derivlog}
\frac{\partial \ln |\det(X)|}{\partial X} = (X^{-1})^T =(X^T)^{-1}
\end{equation}
see also \cite{petersen}, equation 57. We note that these formulae are not assuming any additional structure (e.g. symmetry) of the matrix $X$.
That is, when one uses these expressions to find a maximal covariance matrix, the result has to be subsequently inspected whether it yields a valid covariance matrix.

\subsection{Probability Theory}
For the derivations of variational Laplace, we are also repeatedly in the situation where we want to compute the expectation of quadratic forms of a random vector.The following result gives a general formula on how to compute such expectations.
 Let $x$ be a random vector (in $\mathbb{R}^p$) with expectation $\mu$ and covariance matrix $\Sigma$. Furthermore, let
$A$ be a symmetric $p\times p$-matrix. Then we have
\begin{equation}\label{expectquad}
\mathbb{E}[x^T A x] = \mu^TA\mu + \operatorname{tr}(A\Sigma)
\end{equation}
which follows from the linearity of the trace operator and the symmetry of $A$; see also \cite{petersen}, equation 328.

We will also use the fact that the differential entropy of a Gaussian with mean $\mu \in \mathbb{R}^p$ and covariance matrix $\Sigma \in \mathbb{R}^{p \times p}$ evaluates to 
\begin{equation}\label{Gaussianentropy} \frac{1}{2}  \ln \det \Sigma  + \frac{p}{2} +\frac{p}{2} \ln (2\pi),\end{equation}
consult \citet{murphy2012machine} for reference.

\subsection{Lamberts $W$-Function}\label{sec:lambertw}
Consider the function \begin{equation}\begin{aligned}g: \mathbb{R}_{\geq0} \to  \mathbb{R}_{\geq0}\\ x \mapsto x \exp x.\end{aligned}\end{equation}
As this is a bijection, one can define a unique inverse of this function by 
\begin{equation}\begin{aligned}g^{-1} : \mathbb{R}_{\geq0} \to  \mathbb{R}_{\geq0} \\ g(g^{-1}(x)) = x.\end{aligned}\end{equation}
We denote this inverse  $W$, that is we have $$ W(x) \exp(W(x)) = x .$$
We will refer to this function as Lambert's $W$ function; note that we are treating a special case of positive real numbers here, whereas in the literature the name Lambert's $W$ also commonly refers to the multivalued function that extends the above consideration to complex numbers.
One important property of $W$ is that it helps us identify the zeros of functions that include exponential and linear terms. Indeed, we have the following lemma
\begin{lemma}\label{lambertlemma}
Let $a > 0, \ b \geq 0$ and $c \in \mathbb{R}$. Then, the unique solution to the equation $e^x(ax + c) = b$  is given by $$ x = W(\frac{b e^{\frac{c}{a}}}{a}) -\frac{c}{a}.$$
\end{lemma}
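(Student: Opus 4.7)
The plan is to reduce the equation $e^x(ax+c) = b$ to the defining equation $y e^y = z$ of Lambert's $W$ through a single affine substitution, then invoke the definition of $W$ and argue uniqueness separately.

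First I would set $y := x + c/a$, so that $x = y - c/a$ and $ax + c = ay$. Substituting this into the equation $e^x(ax+c) = b$ gives
\begin{equation*}
e^{y - c/a} \cdot ay = b \iff y e^y = \frac{b}{a} e^{c/a}.
\end{equation*}
Since $a > 0$, $b \geq 0$ and $e^{c/a} > 0$, the right-hand side is a nonnegative real number. By the definition of Lambert's $W$ on $\mathbb{R}_{\geq 0}$ given in \fullref{sec:lambertw}, this equation has the solution
\begin{equation*}
y = W\!\left(\frac{b e^{c/a}}{a}\right),
\end{equation*}
and undoing the substitution yields the claimed formula for $x$.

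For uniqueness, I would argue that the function $g(y) = y e^y$ satisfies $g(y) \geq 0$ if and only if $y \geq 0$, since $e^y > 0$ for all $y$. Hence any real solution $y$ of $y e^y = (b/a) e^{c/a} \geq 0$ must lie in $[0,\infty)$. On this interval, $g'(y) = (1+y)e^y > 0$, so $g$ is strictly increasing, and the solution is unique. The affine map $x \mapsto y = x + c/a$ is a bijection, so uniqueness of $y$ transfers to uniqueness of $x$.

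There is no real obstacle here; the only point requiring a moment of care is the sign argument that confines the solution to the domain on which the single-valued real branch of $W$ is defined. In particular, the assumption $b \geq 0$ is exactly what is needed for the reduced equation to land in the range of the chosen branch of $W$, and without it one would have to invoke the second real branch $W_{-1}$, which the paper explicitly does not use.
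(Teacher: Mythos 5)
Your proposal is correct and follows essentially the same route as the paper: the same affine substitution $y = x + c/a$, the same reduction to $y e^y = \frac{b}{a}e^{c/a}$, and the same appeal to the definition of $W$ on the nonnegative reals. Your explicit sign-and-monotonicity argument for uniqueness is a small addition that the paper leaves implicit in its definition of $W$ as the inverse of a bijection, but it does not change the substance of the argument.
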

To see this, let us first define an auxiliary variable $$u = x + \frac{c}{a}.$$
Thus, the equation reads $$ e^{u-\frac{c}{a}} (au) = b.$$
We rewrite this as $$  u e^u  = b \frac{e^{\frac{c}{a}}}{a},$$
and note that by our assumption, the right hand side is nonnegative. Thus, by definition, the solution is given by
$$ u = W(b \frac{e^{\frac{c}{a}}}{a}).$$
Resubstitution  $x = u + \frac{c}{a}$ then yields 
$$ x = W(b \frac{e^{\frac{c}{a}}}{a}) - \frac{c}{a}$$
as desired.

\section{Disambiguation of Variational Laplace in the Nonlinear Transform Model}\label{sec:disambiguation}
In this section, we want to give the reader a disambiguation of variational Laplace and closely related methods that are frequently cited alongside in the cognitive neuroscience community. Furthermore, we want to highlight certain specific situations where variational Laplace is used in a way slightly deviating from the update equations presented here. 
\subsection{Taylor Approximation of the Transform Function}
In the specific situation of the model we are studying here, notions very similar to variational Laplace can be obtained by a Taylor approximation of the function $f$ instead of the whole log-joint probability.  To see this, we write out the full $\operatorname{ELBO}$ in this model, up to constants, as
\begin{equation}
\begin{aligned}
& \operatorname{ELBO}(\mu_\theta,\Sigma_\theta,\mu_\lambda,\Sigma_\lambda) = \\
&\int \int  \ln p(y,\theta,\lambda) q_{\mu_\theta,\Sigma_\theta}(\theta) q_{\mu_\lambda,\Sigma_\lambda}(\lambda) d\theta d\lambda   -  \int \ln q_{\mu_\theta,\Sigma_\theta}(\theta)  q_{\mu_\theta,\Sigma_\theta}(\theta) d\theta - \int \ln q_{\mu_\lambda,\Sigma_\lambda}(\lambda) q_{\mu_\lambda,\Sigma_\lambda}(\lambda) d\lambda  \\
&-\frac{1}{2}  \int \int  (y-f(\theta))^T Q(\lambda)^{-1} (y-f(\theta))  q_{\mu_\theta,\Sigma_\theta}(\theta) q_{\mu_\lambda,\Sigma_\lambda}(\lambda) d\theta d\lambda   \\
&- \frac{1}{2} \int (\theta-m_\theta)^T  S_\theta^{-1} (\theta-m_\theta) q_{\mu_\theta,\Sigma_\theta}(\theta)d\theta\\
&-\frac{1}{2} \int \left( (\lambda - m_\lambda)^T S_\lambda^{-1} (\lambda-m_\lambda)  + \ln \det(Q(\lambda) \right) q_{\mu_\lambda,\Sigma_\lambda}(\lambda)  d\lambda \\
&+ \mathbb{H} [q_{\mu_\theta,\Sigma_\theta}(\theta)] + \mathbb{H} [ q_{\mu_\lambda,\Sigma_\lambda}(\lambda)] + C.
\end{aligned}
\end{equation}
The differential entropies evaluate to 
\begin{equation}
\begin{aligned}
&\mathbb{H} [q_{\mu_\theta,\Sigma_\theta}(\theta)]   = \frac{1}{2} \ln \det( \Sigma_\theta) + C\\
&\mathbb{H} [ q_{\mu_\lambda,\Sigma_\lambda}(\lambda)]=  \frac{1}{2} \ln \det( \Sigma_\theta) + C. \\
\end{aligned}
\end{equation}
Also, the expectations of the quadratic forms can be carried out directly, using \eqref{expectquad}:
\begin{equation}
\begin{aligned}
&- \frac{1}{2} \int (\theta-m_\theta)^T  S_\theta^{-1} (\theta-m_\theta) q_{\mu_\theta,\Sigma_\theta}(\theta)d\theta =   -\frac{1}{2} \left( (\mu_\theta-m_\theta)^T  S_\theta^{-1} (\mu_\theta-m_\theta)  + \operatorname{tr}( S_\theta^{-1} \Sigma_\theta)\right) \\
&- \frac{1}{2} \int (\lambda-m_\lambda)^T  S_\lambda^{-1} (\lambda-m_\lambda) q_{\mu_\lambda,\Sigma_\lambda}(\lambda)d\lambda =   -\frac{1}{2} \left( (\mu_\lambda-m_\lambda)^T  S_\lambda^{-1} (\mu_\lambda-m_\lambda)  + \operatorname{tr}( S_\lambda^{-1} \Sigma_\lambda)\right). \\
\end{aligned}
\end{equation}
The mixed term can then be approximated by a Taylor series up to first order, such that
\begin{equation}
\begin{aligned}
&-\frac{1}{2}  \int \int  (y-f(\theta))^T Q(\lambda)^{-1} (y-f(\theta))  q_{\mu_\theta,\Sigma_\theta}(\theta) q_{\mu_\lambda,\Sigma_\lambda}(\lambda) d\theta d\lambda \\
&= -\frac{1}{2}  \int   (y-f(\theta))^T \mathbb{E}_{q_{\mu_\lambda,\Sigma_\lambda}} [Q(\lambda)^{-1}] (y-f(\theta))  q_{\mu_\theta,\Sigma_\theta}(\theta)  d\theta\\
 &\approx -\frac{1}{2} \int \left(y-(f(\mu_\theta) + J_f(\mu_\theta)^T(\theta-\mu_\theta)\right)^T \mathbb{E}_{q_{\mu_\lambda,\Sigma_\lambda}} [Q(\lambda)^{-1}]   \left(y-(f(\mu_\theta) + J_f(\mu_\theta)^T(\theta-\mu_\theta)\right)  q_{\mu_\theta,\Sigma_\theta}(\theta),
\end{aligned}
\end{equation}
where the expansion point, as before, is chosen to be $\mu_\theta$.
This integral can now be evaluated explicitly: 
\begin{equation}
\begin{aligned}
&\int \left(y-(f(\mu_\theta) + J_f(\mu_\theta)^T(\theta-\mu_\theta)\right)^T\mathbb{E}_{q_{\mu_\lambda,\Sigma_\lambda}} [Q(\lambda)^{-1}]   \left(y-(f(\mu_\theta) + J_f(\mu_\theta)^T(\theta-\mu_\theta)\right)  q_{\mu_\theta,\Sigma_\theta}(\theta) \\
&= \int y^T   \mathbb{E}_{q_{\mu_\lambda,\Sigma_\lambda}} [Q(\lambda)^{-1}]  y  - 2 y^T  \mathbb{E}_{q_{\mu_\lambda,\Sigma_\lambda}} [Q(\lambda)^{-1}] f(\mu_\theta) + f(\mu_\theta)^T  \mathbb{E}_{q_{\mu_\lambda,\Sigma_\lambda}} [Q(\lambda)^{-1}]  f(\mu_\theta) q_{\mu_\theta,\Sigma_\theta} (\theta) d\theta  \\
&+ \int \left(J_f(\mu_\theta)^T(\theta-\mu_\theta)\right)^T  \mathbb{E}_{q_{\mu_\lambda,\Sigma_\lambda}} [Q(\lambda)^{-1}] \left(J_f(\mu_\theta)^T(\theta-\mu_\theta)\right) q_{\mu_\theta,\Sigma_\theta}(\theta)d\theta\\
&- 2  \int (\theta-\mu_\theta) ^T  q_{\mu_\theta,\Sigma_\theta}(\theta)d\theta  J_f(\mu_\theta) \mathbb{E}_{q_{\mu_\lambda,\Sigma_\lambda}} [Q(\lambda)^{-1}]  y  \  \\
&- 2  \int (\theta-\mu_\theta) ^T  q_{\mu_\theta,\Sigma_\theta}(\theta)d\theta  J_f(\mu_\theta) \mathbb{E}_{q_{\mu_\lambda,\Sigma_\lambda}} [Q(\lambda)^{-1}]  f(\mu_\theta)   \\
&= \left (y-f(\mu_\theta)\right)^T   \mathbb{E}_{q_{\mu_\lambda,\Sigma_\lambda}} [Q(\lambda)^{-1}] \left (y-f(\mu_\theta)\right) + \operatorname{tr} [ J_f(\mu_\theta) \mathbb{E}_{q_{\mu_\lambda,\Sigma_\lambda}} [Q(\lambda)^{-1}] J_f(\mu_\theta)^T \Sigma_\theta]
\end{aligned}
\end{equation}
where the last line follows from the fact that the integrals in last two terms terms evaluate to $0$ due to the choice of the expansion point for the Taylor series being equal to the mean, and using the evaluation of the integral in the second line with the standard formula for the expectation of quadratic forms \eqref{expectquad}.
Hence, the (approximated) $\operatorname{ELBO}$  can be written as
\begin{equation}
\begin{aligned}
&\operatorname{ELBO}(\mu_\theta,\Sigma_\theta,\mu_\lambda,\Sigma_\lambda) =\\
& -\frac{1}{2}\left( \left (y-f(\mu_\theta)\right)^T   \mathbb{E}_{q_{\mu_\lambda,\Sigma_\lambda}} [Q(\lambda)^{-1}] \left (y-f(\mu_\theta)\right) + \operatorname{tr} [ J_f(\mu_\theta) \mathbb{E}_{q_{\mu_\lambda,\Sigma_\lambda}} [Q(\lambda)^{-1}] J_f(\mu_\theta)^T \Sigma_\theta]\right) \\
&-\frac{1}{2} \left( (\mu_\theta-m_\theta)^T  S_\theta^{-1} (\mu_\theta-m_\theta)  + \operatorname{tr}( S_\theta^{-1} \Sigma_\theta)\right) \\
&-\frac{1}{2} \left( (\mu_\lambda-m_\lambda)^T  S_\lambda^{-1} (\mu_\lambda-m_\lambda)  + \operatorname{tr}( S_\lambda^{-1} \Sigma_\lambda)\right) \\
& -\frac{1}{2}  \mathbb{E}_{ q_{\mu_\lambda,\Sigma_\lambda}} [\ln \det(Q(\lambda)^{-1})] 
 + \frac{1}{2}  \ln \det (\Sigma_\theta) 
 + \frac{1}{2} \ln \det (\Sigma_\lambda). \\
\end{aligned}
\end{equation}
If this expression now ought to be optimized with respect to the variational covariance parameter, we can proceed again by calculating the matrix-derivative with respect to $\Sigma_\theta$.
This yields
\begin{equation}
\frac{\partial}{\partial \Sigma_\theta}   \operatorname{ELBO}(\mu_\theta,\Sigma_\theta,\mu_\lambda,\Sigma_\lambda) =  \frac{1}{2} \Sigma_\theta^{-1} -\frac{1}{2} (J_f(\mu_\theta) \mathbb{E}_{q_{\mu_\lambda,\Sigma_\lambda}} [Q(\lambda)^{-1}] J_f(\mu_\theta)^T + S_\theta^{-1}),
\end{equation}
which is $0$ iff
\begin{equation}
\Sigma_\theta = \left(J_f(\mu_\theta) \mathbb{E}_{q_{\mu_\lambda,\Sigma_\lambda}} [Q(\lambda)^{-1}]J_f(\mu_\theta)^T + S_\theta^{-1}\right)^{-1}.
\end{equation}
Note that in this line of derivation, no issue with positive definite-ness arises.
Furthermore, optimization with respect to $\mu_\theta$ corresponds to maximizing
\begin{equation}
\begin{aligned}
\widetilde{I}(\mu_\theta) : =  &-\frac{1}{2} \left (y-f(\mu_\theta)\right)^T   \mathbb{E}_{q_{\mu_\lambda,\Sigma_\lambda}} [Q(\lambda)^{-1}] \left (y-f(\mu_\theta)\right) \\
 &-\frac{1}{2} \operatorname{tr} [ J_f(\mu_\theta) \mathbb{E}_{q_{\mu_\lambda,\Sigma_\lambda}} [Q(\lambda)^{-1}] J_f(\mu_\theta)^T \Sigma_\theta] \\
&-\frac{1}{2} (\mu_\theta-m_\theta)^T  S_\theta^{-1} (\mu_\theta-m_\theta).
\end{aligned}
\end{equation}
We see that this is similar to the function $I(\mu_\theta)$ that we introduced while discussing variational Laplace. 
Especially, if one proceeds to optimize this function via gradient-based methods and adheres to the proposition of ignoring higher-order-derivatives of $f$, the trace-term here will not matter, as its derivatives involve higher-order derivatives of $f$.
One will thus be optimizing 
\begin{equation}
-\frac{1}{2} \left (y-f(\mu_\theta)\right)^T   \mathbb{E}_{q_{\mu_\lambda,\Sigma_\lambda}} [Q(\lambda)^{-1}] \left (y-f(\mu_\theta)\right) -\frac{1}{2} (\mu_\theta-m_\theta)^T  S_\theta^{-1} (\mu_\theta-m_\theta)
\end{equation}
with respect to $\mu_\theta$. Hence, the only difference to the variational Laplace scheme lies in influence of the fixed variational density$q_\lambda$.
 Here, the expectation of $Q(\lambda)^{-1}$ is taken under this density, while in variational Laplace  $Q(\lambda)^{-1}$ is evaluated at the expected value of $\lambda$ with respect to this density.

\subsection{Taylor Expansion at Last Iteration Step}
The following approach is also depending on a Taylor expansion of the transform $f$. Although not explicitly called variational Laplace by the authors, the method is similar in nature and cited in similar occasions\citep{chappell2008variational}.
Here, one considers the update of $q_\theta(\theta)$ with $q_\lambda(\lambda)$ given. According to the fundamental Lemma of variational Inference, the optimal form of this density is implicitly given by
\begin{equation}
\begin{aligned}
\ln q_\theta(\theta) &=  \int \ln\left( p(y|\theta,\lambda) p(\theta) p(\lambda)\right) q(\lambda) d\lambda + C \\
		&=  -\frac{1}{2} (y-f(\theta))^T \mathbb{E}_{q_\lambda}[Q(\lambda)]^{-1}  (y-f(\theta)) \\
		&- \frac{1}{2} (\theta-m_\theta) S_\theta^{-1} (\theta-m_\theta) \\
		&- \frac{1}{2}  \mathbb{E}_{q_\lambda} [\ln \det Q(\lambda)]  
		+ \mathbb{E}_{q_\lambda} [\ln p(\lambda)] + C .
\end{aligned}
\end{equation}
The first insight, as is typical for the free-form approach to variational inference, is that all terms not including $\theta$ can be neglected. This is because these terms will be normalized out to render $q_\theta$ a proper density and do not contribute to determining the functional form of the density.
Thus, one only has to consider the terms
\begin{equation}\label{Gaussianterms}
 -\frac{1}{2} (y-f(\theta))^T \mathbb{E}_{q_\lambda}[Q(\lambda)^{-1}]   (y-f(\theta)) - \frac{1}{2} (\theta-m_\theta) S_\theta^{-1} (\theta-m_\theta).
\end{equation}
By a linearization of $f$ via a Taylor approximation, the above equations will result in a Gaussian form for $q_\theta$. Thus, the method iteratively employs a Taylor approximation of $f$ centered on mean of the Gaussian that resulted from the previous approximation. 
That is, assume $\mu_\theta$ is given from the previous iteration. 
Then, one can approximate
\begin{equation}
f(\theta) \approx f(\mu_\theta) + J_f(\mu_\theta)^T(\theta-\mu_\theta)
\end{equation}
Plugging this in into \ref{Gaussianterms} yields
\begin{equation}
\begin{aligned}
 &-\frac{1}{2} \left( y-(f(\mu_\theta) + J_f(\mu_\theta)^T(\theta-\mu_\theta)\right)^T \mathbb{E}_{q_\lambda}[Q(\lambda)^{-1}]   \left(y-(f(\mu_\theta) + J_f(\mu_\theta)^T(\theta-\mu_\theta)\right) \\ &- \frac{1}{2} (\theta-m_\theta) S_\theta^{-1} (\theta-m_\theta).
\end{aligned}
\end{equation}
Now, one can rewrite this as 
\begin{equation}
\begin{aligned}
&-\frac{1}{2}   (y-f(\mu_\theta))^T  \mathbb{E}_{q_\lambda}[Q(\lambda)^{-1}]  (y-f(\mu_\theta)) \\
&- (y-f(\mu_\theta))^T  \mathbb{E}_{q_\lambda}[Q(\lambda)^{-1}]  J_f(\mu_\theta)^T \mu_\theta \\ 
&+  \theta^T J_f(\mu_\theta) \mathbb{E}_{q_\lambda}[Q(\lambda)^{-1}]   (y-f(\mu_\theta)) \\
&- \frac{1}{2}  \theta^T J_f(\mu_\theta)\mathbb{E}_{q_\lambda}[Q(\lambda)^{-1}]J_f(\mu_\theta)^T \theta \\
& + \theta^T  J_f(\mu_\theta) \mathbb{E}_{q_\lambda}[Q(\lambda)^{-1}]  J_f(\mu_\theta)^T  \mu_\theta \\
& -\frac{1}{2} \mu_\theta^T  J_f(\mu_\theta)  \mathbb{E}_{q_\lambda}[Q(\lambda)^{-1}] J_f(\mu_\theta)^T \mu_\theta \\
& -\frac{1}{2} \theta^T S_\theta^{-1} \theta 
 + \theta^T S_\theta^{-1} \mu_\theta
 -\frac{1}{2} \mu_\theta^T S_\theta^{-1} \mu_\theta.
\end{aligned}
\end{equation}
Again, all terms that do not depend on $\theta$ will not influence the functional form of $q_\theta$. Dropping these terms, the above can be subsumed into
\begin{equation} \label{completesquare1}
\begin{aligned}
&-\frac{1}{2} \theta^T \left(  J_f(\mu_\theta)\mathbb{E}_{q_\lambda}[Q(\lambda)^{-1}]J_f(\mu_\theta)^T+ S_\theta^{-1} \right) \theta \\
&+ \theta^T \left( J_f(\mu_\theta) \mathbb{E}_{q_\lambda}[Q(\lambda)^{-1}] \left(y -f(\mu_\theta) + J_f(\mu_\theta)^T \mu_\theta \right) + S_\theta^{-1} \mu_\theta \right).
\end{aligned}
\end{equation}
By completing the square, one thus concludes that the functional form  of $q_\theta$ under this approximation is a Gaussian
\begin{equation}\label{completesquare2}
\begin{aligned}
q_\theta(\theta) &= \mathcal{N}(\theta;\mu_\theta^*,\Sigma_\theta^*) \\
\mu_\theta^* &= {\Sigma_\theta^*}^{-1} \left(J_f(\mu_\theta) \mathbb{E}_{q_\lambda}[Q(\lambda)^{-1}] \left(y -f(\mu_\theta) + J_f(\mu_\theta)^T \mu_\theta \right) + S_\theta^{-1} \mu_\theta \right) \\
\Sigma_\theta^*  &= \  \left(J_f(\mu_\theta)\mathbb{E}_{q_\lambda}[Q(\lambda)^{-1}]J_f(\mu_\theta) + S_\theta^{-1}\right)^{-1}.
\end{aligned}
\end{equation}
This approximate density can then be used to update $q_\lambda$. To make this possible, it might be needed to likewise approximate $f$ up to first order. Then, at the next iteration, $\mu_\theta^*$ takes the role of the expansion point. 
We note that in this approach, in contrast to previously presented approaches, the optimization of $\mu_\theta$ depends on the current estimate for this value explicitly, while in the other schemes it is  implicitly taken into account due to the current value of $\mu_\lambda$, that will depend on $\mu_\theta$ through the previous iteration.

\subsection{Linear Models}\label{sec:linearlaplace}
A special case of the discussed model arises when the transform function is actually linear. That is, if one has $f(\theta) = X\theta$ for some matrix $X$. In this case, one can partly evaluate the $\operatorname{ELBO}$ directly, without utilizing a Taylor approximation. 
This is because in this case, by the fundamental Lemma of variational inference, the optimal variational density of $\theta$ will actually be a Gaussian. To see this, assume as usually that $q_\lambda$ is fixed. Then, by the fundamental Lemma, one has that
\begin{equation}
\ln q_\theta(\theta) =  -\frac{1}{2} (y-X\theta)^T \mathbb{E}_{q_\lambda}[Q(\lambda)^{-1}]   (y-X\theta) - \frac{1}{2} (\theta-m_\theta) S_\theta^{-1} (\theta-m_\theta) + C,
\end{equation}
where we have ignored all terms devoid of $\lambda$ as in \eqref{Gaussianterms}. These terms now lead to the form of the $q_\theta$ being Gaussian, with the same considerations as in \eqref{completesquare1}  and \eqref{completesquare2}. Hence, $q_\theta$ is given by
\begin{equation}
\begin{aligned}
q_\theta(\theta) &= \mathcal{N}(\theta;\mu_\theta^*,\Sigma_\theta^*) \\
\mu_\theta^* &= {\Sigma_\theta^*}^{-1} \left(X^T \mathbb{E}_{q_\lambda}[Q(\lambda)^{-1}]y + S_\theta^{-1} \mu_\theta \right) \\
\Sigma_\theta^* &= \left(X^T\mathbb{E}_{q_\lambda}[Q(\lambda)^{-1}]X + S_\theta^{-1}\right)^{-1}.
\end{aligned}
\end{equation}
Thus, if $\mathbb{E}_{q_\lambda}[Q(\lambda)^{-1}]$ can be evaluated in a straightforward manner, then one has a closed-form update for the variational density on $\theta$.
This may for example be the case in when $q_\lambda$ is restricted to be Gaussian  and the inverse covariance matrix
takes the form $$ Q(\lambda)^{-1} = \sum_{l} \exp(\lambda_l) \Phi_l,$$ for some set of fixed positive definite matrizes $\Phi_l$. In variational Laplace, one could thus proceed to update $q_\lambda$ with these optimal values directly.
However, applications in the literature typically seem to use a similar parameterization for the covariance matrix itself $$Q(\lambda) = \sum_{l} \exp(\lambda_l) \Phi_l,$$
where the expectation may not be as straightforward to evaluate \citep{friston2008Bayesian,friston2019variational,lopez2014algorithmic}.  Also consider the appendix of \citet{friston2007variational} for a discussion of when the respective parameterizations are appropriate.
We suppose that in the latter case the covariance matrix at the variational expectation parameter $\mu_\lambda^*$ is used, such that 
\begin{equation}
\begin{aligned}
q_\theta(\theta) &= \mathcal{N}(\theta;\mu_\theta^*,\Sigma_\theta^*) \\
\mu_\theta^* &= {\Sigma_\theta^*}^{-1} \left(X^T Q(\mu_\lambda^*)^{-1}y + S_\theta^{-1} \mu_\theta \right) \\
\Sigma_\theta^* &= \left(X^T Q(\mu_\lambda^*) X + S_\theta^{-1}\right)^{-1}.
\end{aligned}
\end{equation}
although in the aforementioned work, no algorithmic description of this subpart of the variational Laplace equations is given. \cite{starke2017} derive similar update equations explicitly by dropping the Hessian term with respect to $\lambda$ in the update of $\mu_\theta$.
Note that in "full" variational Laplace, as given by equations \eqref{varlap} and following, one would have to identify the maximum of 
\begin{equation}
\begin{aligned}
I(\mu_\theta) = &-\frac{1}{2} \left(y-X \mu_\theta \right)^T Q\left(\mu_\lambda^*\right)^{-1} \left(y-X \mu_\theta \right) \\
		    & -\frac{1}{2} \left(\mu_\theta - m_\theta\right)^T S_\theta^{-1}\left(\mu_\theta - m_\theta\right) \\
		    & + \frac{1}{2} \operatorname{tr} \left(\Sigma_\lambda^*   \frac{\partial^2}{\partial^2 \lambda} \left(-\frac{1}{2} \left(y-X \mu_\theta \right)^T Q(\lambda)^{-1}\left(y-X \mu_\theta \right) \bigg\rvert_{\lambda = \mu_\lambda^*}\right)\right),
\end{aligned}
\end{equation}
which only coincides with the above derivations if the third term is ignored.

\subsection{Flexible Use of Variational Laplace}
Generally, the variational Laplace approach offers some degree of flexibility. It may be advantageous to combine the approach with the free-form-approach: By the fundamental lemma, 
\begin{equation}
q_{\theta_j}(\theta_j)  \propto \exp\left( \int \prod_{k \neq j} q_{\theta_k} (\theta_{k}) \ln{p(y,\theta)} d \theta_{/j}\right).
\end{equation}
Thus, if the choice of the prior and the model is such that for a subset of the parameters, the above free-form scheme would yield an update scheme preserving the functional form of $q_{\theta_j}$ over all iterations, these densities may be updated by the free-form-approach.
The expectations that are needed to obtain may still need an approximation, that may be obtained via variational Laplace.
Let us consider an example in the nonlinear transform model to clarify these statements \citep{daunizeau2017variational}. One has that
\begin{equation}
\ln q_\lambda(\lambda) =  -\frac{1}{2}\mathbb{E}_{q_\theta} \left[(y-f(\theta))^T Q(\lambda)^{-1}   (y-f(\theta)\right]  + \frac{1}{2} \ln \det \left(Q(\lambda)^{-1}\right) + \ln p(\lambda)  + C
\end{equation}
by the fundamental Lemma. If $Q(\lambda)^{-1} = \lambda \Phi$ for some fixed $\Phi$, one can pull $\lambda$ out of the expression and ignore constant terms to get
\begin{equation}
\ln q_\lambda(\lambda) = -\frac{\lambda}{2}\mathbb{E}_{q_\theta} \left[(y-f(\theta))^T \Phi   (y-f(\theta)\right]  +\frac{d }{2} \ln \lambda + \ln p(\lambda) + C.
\end{equation}
If $p(\lambda)$ is specified as a gamma density with parameters $\alpha,\beta$ one has
\begin{equation}
\ln q_\lambda(\lambda) = -\frac{\lambda}{2}\mathbb{E}_{q_\theta} \left[(y-f(\theta))^T \Phi   (y-f(\theta)\right]  +\frac{d }{2} \ln \lambda  + \alpha-1 \ln \lambda  - \beta \lambda + C
\end{equation}
and one sees, that the optimal density will again be given by a gamma density with the modified parameters
\begin{equation}
\begin{aligned}
& \alpha^* = \alpha + \frac{d }{2} \\
& \beta^* = \beta + \frac{1}{2} \mathbb{E}_{q_\theta} \left[(y-f(\theta))^T \Phi   (y-f(\theta)\right].
\end{aligned}
\end{equation}
The remaining expectation can then be approximated by the same arguments as in variational Laplace, dropping second derivatives, as 
\begin{equation}
\mathbb{E}_{q_\theta} \left[(y-f(\theta))^T \Phi   (y-f(\theta)\right] \approx    \left(y-f(\mu_\theta^*)\right) \Phi (y-f(\mu_\theta^*) +  \operatorname{tr} \left(\Sigma_\theta^*   J_f(\mu_\theta^*)^T \Phi J_f\left(\mu_\theta^*\right)\right),
\end{equation}
where $\mu_\theta^*$ and $\Sigma^*$ are the expectation and covariance parameter of the previously updated density $q_\theta$. This density in turn, as before, can be updated by variational Laplace, where the "variational energy" that has to be optimized now reads
\begin{equation}
I(\mu_\theta) = -\frac{1}{2}  \left(y-f(\theta)\right)^T \mathbb{E}_{q_\lambda}[\lambda] \Phi   \left(y-f(\theta)\right) - \frac{1}{2} (\mu_\theta - m_\theta)^T S_\theta^{-1} (\mu_\theta - m_\theta),
\end{equation}
where the expectation that is needed here can be evaluated explicitly due to the closed form of $q_\lambda$. Similar approaches combining free-form and variational Laplace have been employed for inference in stochastic nonlinear dynamic causal models \citep{daunizeau2009variational} and  
classification studies \citep{brodersen2013variational}.
\end{appendices}
\section*{References}
\bibliography{masterbib}
\end{document}